\documentclass[11pt]{amsart} 
\usepackage{amsmath,amssymb,amsthm} 
\usepackage[all]{xy} 

\newtheorem{theorem}{Theorem}[section] 
\newtheorem{lemma}[theorem]{Lemma} 
\newtheorem{proposition}[theorem]{Proposition} 
\newtheorem{corollary}[theorem]{Corollary} 
\newtheorem{maintheorem}{Theorem}

\theoremstyle{definition} 
\newtheorem{definition}[theorem]{Definition} 
\newtheorem{remark}[theorem]{Remark} 
\newtheorem{example}[theorem]{Example} 
\newtheorem{setting}[theorem]{Setting}

\numberwithin{equation}{section}

\renewcommand{\phi}{\varphi} 

\newcommand{\ep}{\varepsilon}

\newcommand{\Eff}{\operatorname{Eff}}

\newcommand{\Homeo}{\operatorname{Homeo}} 
\newcommand{\id}{\operatorname{id}} 
 
\newcommand{\Iso}{\operatorname{Iso}} 
\newcommand{\Ker}{\operatorname{Ker}}

\newcommand{\St}{\operatorname{St}} 
\newcommand{\supp}{\operatorname{supp}} 

\newcommand{\N}{\mathbb{N}} 
\newcommand{\Z}{\mathbb{Z}}

\newcommand{\cCO}{\mathcal{CO}} 
\newcommand{\cG}{\mathcal{G}} 
\newcommand{\cH}{\mathcal{H}} 
\newcommand{\cK}{\mathcal{K}} 
 
\newcommand{\cS}{\mathcal{S}} 

\newcommand{\sD}{\mathsf{D}} 
\newcommand{\sF}{\mathsf{F}}

\title[Stabilizers and maximal subgroups of topological full groups] 
{Stabilizers and maximal subgroups \\ 
of topological full groups} 
\author{Hiroki Matui} 
\thanks{The author was supported by JSPS KAKENHI Grant Number JP23K22397.} 
\address{Graduate School of Mathematical Sciences, 
The University of Tokyo, Japan} 
\email{hiroki@ms.u-tokyo.ac.jp} 
\date{}

\subjclass[2020]{Primary 20E28; Secondary 22A22, 37B05, 37B10, 20E32} 
\keywords{topological full group, maximal subgroup, ample groupoid}

\begin{document}

\begin{abstract} 
We study closed-set stabilizers in topological full groups of minimal 
effective ample groupoids with Cantor unit space. 
For almost finite or purely infinite groupoids, we extend the maximal 
stabilizer theorem of Grigorchuk and Vorobets to every subgroup 
containing the commutator subgroup. 
For a nonempty closed nowhere dense set, maximality of its stabilizer 
is equivalent to minimality of its stabilizer action, minimality of 
the regular reduction, and primeness of a canonical wide open subgroupoid. 
Without assuming almost finiteness or pure infiniteness, we prove 
spatial rigidity for stabilizers acting minimally on infinite closed 
nowhere dense sets. 
Their isomorphism types determine the ambient Kakutani equivalence 
class and, when Hausdorff, the effective reduction. 
We also realize every AF groupoid with compact unit space as 
the full restriction, with its relative topology, of any minimal 
effective ample groupoid with Cantor unit space to a closed 
nowhere dense subset. 
Combining rigidity and realization yields continuum many pairwise 
nonisomorphic stabilizers in every subgroup containing the commutator 
subgroup; these stabilizers are maximal when the ambient groupoid 
is almost finite or purely infinite. 
The realization also gives explicit AF subquotients of the ambient 
reduced groupoid $C^*$-algebra, including the CAR algebra, 
with compatibility on the canonical diagonal subalgebras. 
\end{abstract} 

\maketitle

\section{Introduction} 

Closed-set stabilizers provide a natural source of maximal subgroups 
of topological full groups. 
Their maximality is governed by the action on the closed set, 
while their abstract group structure can retain information 
both about the ambient dynamics and about the restricted action. 
In this paper, we study these stabilizers through maximality criteria, 
spatial rigidity and realization of prescribed reductions. 

For an effective ample groupoid $\cG$ with Cantor unit space $X$, 
we write $\sF(\cG)$ for its topological full group 
and $\sD(\cG)$ for the commutator subgroup of $\sF(\cG)$. 
All groupoids in this paper are second countable and Hausdorff. 
For a subgroup $\Gamma\subset\sF(\cG)$ and a closed subset $Y\subset X$, 
let 
\[ 
\St(\Gamma,Y):=\{\alpha\in\Gamma\mid\alpha(Y)=Y\} 
\] 
be the setwise stabilizer of $Y$, and put 
$\St(\cG,Y):=\St(\sF(\cG),Y)$. 
Our maximality results concern minimal groupoids 
that are either almost finite or purely infinite. 
These classes include groupoids of Cantor minimal $\Z$ systems 
and groupoids of irreducible shifts of finite type 
with non-permutation adjacency matrices, respectively. 

The starting point is the work of Grigorchuk and Vorobets 
on maximal subgroups of fully amplified groups \cite{GV26ETDS}. 
A group of homeomorphisms is fully amplified if it contains every 
homeomorphism locally agreeing with its elements. 
These groups are also called ample in \cite[Definition 2.5]{GV26ETDS}; 
we use fully amplified for groups and reserve ample for groupoids. 
They prove that, for a minimal fully amplified group with Property E 
acting on a Cantor set, the stabilizer of an infinite closed 
nowhere dense subset is maximal if and only if its action 
on that subset is minimal \cite[Theorem 1.3]{GV26ETDS}. 
Property E means that, for two intersecting clopen sets, 
the local subgroup on their union is generated 
by the local subgroups on the two sets. 
For a finite nonempty subset, maximality is equivalent 
to containment in a single orbit, under the weaker assumption 
that the fully amplified group has no finite orbits 
\cite[Theorem 1.2]{GV26ETDS}. 
For the groupoids considered here, $\sF(\cG)$ is fully amplified 
and has Property E, so these results already apply to $\sF(\cG)$ itself. 
We extend the criterion to every group 
\[ 
\sD(\cG)\subset\Gamma\subset\sF(\cG), 
\] 
including the commutator subgroup and the kernel of the index map, 
and express it in terms of reductions and prime subgroupoids. 
The groups properly contained in $\sF(\cG)$ in this range 
are not fully amplified. 

To describe the dynamics on an arbitrary closed set $Y$, 
we use the regular reduction $\cG|Y$. 
Its arrows are those with source and range in $Y$ 
that lie in a compact open bisection $U$ satisfying $YU=UY$ 
(Definition~\ref{def:etalesubsets}). 
This local preservation condition makes $\cG|Y$ an ample groupoid 
with the relative topology, although it need not be effective. 
It can be strictly smaller than the full restriction 
$r^{-1}(Y)\cap s^{-1}(Y)$. 
The two coincide precisely when $Y$ is $\cG$-\'etale, 
that is, when the full restriction is \'etale with the relative topology 
(see~\eqref{eq:etalesubsetregularity}). 
The union of all such $Y$-preserving bisections in $\cG$ 
also defines a canonical wide open subgroupoid $\cS_Y$, 
called the $Y$-preserving subgroupoid 
(Definition~\ref{def:preservingsubgroupoid}). 
It satisfies 
\[ 
\sF(\cS_Y)=\St(\cG,Y),\qquad \cS_Y|Y=\cG|Y, 
\] 
and, as sets, 
\[ 
\cS_Y=\cG|Y\sqcup\cG|(X\setminus Y). 
\] 
A proper wide open subgroupoid is called prime 
if there is no proper intermediate wide open subgroupoid 
\cite[Definition 5.1]{Ma26maximal1}. 

\begin{maintheorem}\label{intro:stabilizers} 
Let $\cG$ be a minimal effective ample groupoid 
with Cantor unit space $X$, 
and suppose that $\cG$ is either almost finite or purely infinite. 
Let $\Gamma$ be a group with $\sD(\cG)\subset\Gamma\subset\sF(\cG)$, 
and let $Y\subset X$ be a nonempty closed nowhere dense subset. 
Then the following conditions are equivalent. 
\begin{enumerate} 
\item $\St(\Gamma,Y)$ is a maximal subgroup of $\Gamma$. 
\item $\St(\Gamma,Y)$ acts minimally on $Y$. 
\item The regular reduction $\cG|Y$ is minimal. 
\item The inclusion $\cS_Y\subset\cG$ is prime. 
\end{enumerate} 
\end{maintheorem}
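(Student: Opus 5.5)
We will prove the cycle of implications (1)$\Rightarrow$(2)$\Rightarrow$(3)$\Rightarrow$(4)$\Rightarrow$(1). Throughout we use the correspondence from \cite{Ma26maximal1}: for minimal almost finite or purely infinite $\cG$, every subgroup $H$ with $\sD(\cG)\subset H\subset\Gamma$ has the form $\Gamma\cap\sF(\cH)$ for the wide open subgroupoid $\cH$ generated by the supports of $H$. Combined with $\sF(\cS_Y)=\St(\cG,Y)$, this turns maximality questions in $\Gamma$ into primeness questions for wide open subgroupoids.

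For (1)$\Rightarrow$(2), suppose $\St(\Gamma,Y)$ does not act minimally on $Y$. Then some orbit closure $Z\subsetneq Y$ is nonempty and closed. We will show that the stabilizer of the pair $\{Y,Z\}$, or of a suitable nested pair, gives a proper intermediate subgroup $\St(\Gamma,Y)\cap\St(\Gamma,Z)\subsetneq\St(\Gamma,Y)$. The construction should rather go upward: we want a subgroup strictly between $\St(\Gamma,Y)$ and $\Gamma$. Our approach is to take $Y'=Y\setminus(\text{a relatively open }\St(\Gamma,Y)\text{-invariant piece})$ and the group generated by $\St(\Gamma,Y)$ together with elements moving only that piece. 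More concretely, we will use the Grigorchuk--Vorobets device: if $W\subset Y$ is a proper nonempty closed invariant subset, then the set of $\alpha\in\Gamma$ with $\alpha(Y)\cap U=Y\cap U$ near $W$ (germ stabilizer of $Y$ along $W$) is a subgroup. It contains $\St(\Gamma,Y)$ properly, because nowhere density lets us move $Y$ near a point of $Y\setminus W$ using an element of $\sD(\cG)$ supported in a small clopen set away from $W$. It is also proper, because minimality of $\cG$ and $\sD(\cG)$ provide elements moving points of $W$ off $Y$.

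For (2)$\Rightarrow$(3), the orbits of $\St(\Gamma,Y)\subset\sF(\cS_Y)$ on $Y$ are contained in orbits of $\cS_Y|Y=\cG|Y$, so minimality passes to the groupoid. For (3)$\Rightarrow$(4), let $\cS_Y\subsetneq\cH\subset\cG$ be a wide open subgroupoid. We will show $\cH=\cG$. Since $\cS_Y$ contains $\cG|(X\setminus Y)$, the extra arrows of $\cH$ must have range or source in $Y$ and move some point $y\in Y$ off $Y$, or into $Y$ from outside. Minimality of $\cG|Y$ spreads this across all of $Y$ by conjugation inside $\cH$, so every point of $Y$ is $\cH$-equivalent to points of $X\setminus Y$. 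Since $\cG|(X\setminus Y)\subset\cH$ and $X\setminus Y$ is dense and open, an argument via compact open bisections, using that $\cH$ is open and contains the unit space, should show that $\cH$ contains a neighbourhood of every arrow of $\cG$, and hence $\cH=\cG$.

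For (4)$\Rightarrow$(1), let $\St(\Gamma,Y)\subsetneq H\subset\Gamma$. Then $H\supset\sD(\cS_Y)$, but we need $H\supset\sD(\cG)$ to apply the correspondence. We expect this to be the \textbf{main obstacle}: showing that the subgroup generated by $\St(\Gamma,Y)$ and any element not preserving $Y$ contains $\sD(\cG)$. Our first attempt is to let $\cH$ be the subgroupoid generated by germs of $H$, which is wide, open, and strictly contains $\cS_Y$, so that primeness gives $\cH=\cG$. We then invoke the generation results of \cite{Ma26maximal1}: for almost finite or purely infinite $\cG$, $\sD(\cG)$ is generated by elements supported in small clopen sets, namely transposition-type elements of order $2$ and $3$-cycles. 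We would then show that each such element is a product of $H$-conjugates of elements of $\sD(\cS_Y)$, using that $\cH=\cG$ locally realises its germs. Once $H\supset\sD(\cG)$ is established, $H=\Gamma\cap\sF(\cH)=\Gamma$.
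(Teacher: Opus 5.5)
\textbf{The gap is (4)$\Rightarrow$(1).} You flag this step as the main obstacle but do not resolve it, and the framework you set up for it is unsound.

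The ``correspondence'' you attribute to \cite{Ma26maximal1} is false as stated. By \cite[Lemma 3.7]{Ma15crelle}, every arrow of $\cG$ lies in a full bisection inducing an element of $\sD(\cG)$. So the germs of every $H\supset\sD(\cG)$ already exhaust $\cG$, and your formula would force $H=\Gamma$ for all such $H$. This fails for $H=\sD(\cG)\subsetneq\Gamma=\sF(\cG)$ whenever $H_1(\sF(\cG))\neq0$.

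More fundamentally, knowing that the germs of a subgroup generate $\cG$ gives no group-level containment by itself. Thompson's group $T\subset V$ realizes every germ of the groupoid of the full shift on two symbols, whose topological full group $V$ is simple, yet $T\neq V$.

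Even your claim that the germs of $H$ cover $\cS_Y$ needs every arrow of $\cG|Y$ to be realized by an element of $\St(\Gamma,Y)$. Since a transposition $\tau_U$ need not lie in $\Gamma$, this requires the product $\tau_U\tau_V$ of Lemma~\ref{involution} with $\supp(\tau_V)\cap Y=\emptyset$. That is the orbit equality of Proposition~\ref{stabilizerminimality}, which your cycle never proves. The sentence ``each such element is a product of $H$-conjugates of elements of $\sD(\cS_Y)$'' restates the whole difficulty rather than addressing it.

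\textbf{What is missing is a gluing principle inside $\Gamma$ itself.} This is Property E (Proposition~\ref{propertyE}): $\Gamma_{V_1\cup V_2}=\langle\Gamma_{V_1}\cup\Gamma_{V_2}\rangle$ whenever $W:=V_1\cap V_2\neq\emptyset$.

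The paper derives it from Property E of $\sF(\cG)$, which comes from cancellation via Property HS and \cite[Proposition 8.6]{GV26ETDS}. One corrects each factor $\beta_i\in\sF(\cG)_{V_{j(i)}}$ by some $\delta_i\in\sF(\cG)_W$ in the same class of $\sF(\cG)/\Gamma$; such $\delta_i$ exists by Lemma~\ref{localabelianization}.

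Now take $\St(\Gamma,Y)\subsetneq K\subset\Gamma$ and consider the family of clopen $V$ with $\Gamma_V\subset K$.
\begin{enumerate}
\item The family contains every clopen set missing $Y$, and is closed under clopen subsets, $K$-translates, and unions of intersecting members.
\item Pick $\gamma\in K\setminus\St(\Gamma,Y)$ moving some $y_0\in Y$ off $Y$ (invert if necessary), and a clopen $A\ni y_0$ with $\gamma(A)\cap Y=\emptyset$. Then $\Gamma_A=\gamma^{-1}\Gamma_{\gamma(A)}\gamma\subset K$.
\item Minimality of $\St(\Gamma,Y)$ on $Y$ spreads this to a finite clopen cover $W_1,\dots,W_p$ of $Y$ inside the family.
\item Nowhere density supplies nonempty clopen $D_i\subset W_i\setminus Y$. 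These serve as overlaps for gluing the $D_i$, the $W_i$ and the remaining complement, so $\Gamma=\Gamma_{\cG^{(0)}}\subset K$.
\end{enumerate}
This proves (2)$\Rightarrow$(1) directly, with no detour through germ groupoids. The paper's logic is therefore (1)$\Leftrightarrow$(2)$\Leftrightarrow$(3)$\Leftrightarrow$(4), not a cycle. If you do reach $\sD(\cG)\subset H$ by some other route, conclude with $\Gamma=\St(\Gamma,Y)\sD(\cG)$ from Remark~\ref{rem:stabilizerabelianization}, not with the false correspondence.

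\textbf{Smaller points.}
\begin{enumerate}
\item For (1)$\Rightarrow$(2), simply use $\St(\Gamma,Y)\subsetneq\St(\Gamma,Z)\subsetneq\Gamma$. The middle inclusion is strict via some $\sigma\in\sD(\cG)_A$ with $A\cap Z=\emptyset$ that moves a point $y\in Y\setminus Z$ off $Y$. Your germ stabilizer is closed under composition only if you also require $\alpha(W)=W$.
\item Your (3)$\Rightarrow$(4) is the paper's argument, with two corrections. The new arrows of $\cH$ may have both endpoints in $Y$; use a bisection of $\cH$ not preserving $Y$ instead. The ending is simply $g=k_r^{-1}(k_rgk_s^{-1})k_s$ with $k_rgk_s^{-1}\in\cG|(\cG^{(0)}\setminus Y)\subset\cH$.
\end{enumerate}
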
 

This follows from Theorem~\ref{stabilizeractionmaximal}, 
Proposition~\ref{stabilizerminimality} and 
Proposition~\ref{prop:preservingsubgroupoid}. 
In particular, maximality holds simultaneously for all the groups 
$\Gamma$ in the statement, and $Y$ is then either finite or a Cantor set. 
The extension of Property E to these groups 
is proved in Proposition~\ref{propertyE}. 
The key observation is that every abelianization class 
of $\sF(\cG)$ can be represented by an element supported 
in any prescribed nonempty clopen set 
(Lemma~\ref{localabelianization}). 
This allows local factorizations in $\sF(\cG)$ 
to be adjusted to factorizations in $\Gamma$. 

The equivalence between minimality of $\cG|Y$ 
and primeness of $\cS_Y\subset\cG$ requires only minimality 
of the ambient groupoid. 
It relates closed-set stabilizers to the subgroupoid constructions 
in the companion paper \cite{Ma26maximal1}, 
which studies maximal subgroups arising from factor maps 
and wide open subgroupoids. 
In those constructions, conditions on the induced maps 
on abelianizations enter the maximality criteria. 
For a minimal almost finite or purely infinite $\cG$, 
the stabilizer $\St(\cG,Y)$ maps onto the abelianization 
of $\sF(\cG)$ for every proper closed set $Y$ 
(Remark~\ref{rem:stabilizerabelianization}). 
Thus the surjectivity condition is automatic here, 
and primeness alone characterizes maximality 
under the hypotheses of Theorem~\ref{intro:stabilizers}. 

For isomorphism questions, we also consider the effective reduction 
$\cG{\downarrow}Y:=\Eff(\cG|Y)$, 
obtained by quotienting out the interior isotropy. 
When this quotient is Hausdorff, restriction to $Y$ gives 
a surjective homomorphism 
\[ 
\St(\Gamma,Y)\longrightarrow\sF(\cG{\downarrow}Y) 
\] 
under the hypotheses of Theorem~\ref{intro:stabilizers} 
(Proposition~\ref{structureofSt}). 
In this case, minimality of $\cG{\downarrow}Y$ 
is another equivalent condition in that theorem. 
The Hausdorff assumption is needed for our use of the effective 
reduction, but not for the four conditions stated above. 
Under the corresponding Hausdorff assumptions, 
successive effective reductions agree with direct reduction 
when the intermediate closed set is $\cG$-\'etale 
(Proposition~\ref{prop:iteratedreduction}). 
The distinction between the regular reduction and the full 
restriction is already visible in stabilizers related to 
\cite[Example 6.9]{BBQS25TAMS}; 
an AF example is given in Example~\ref{ex:AFnonetalereduction}. 

Our next result shows that an abstract isomorphism of stabilizers 
is implemented on the ambient unit spaces and determines 
groupoid invariants both on and away from the closed sets. 
Neither spatial realization nor the groupoid invariants require 
an almost finite or purely infinite hypothesis. 

\begin{maintheorem}\label{intro:rigidity} 
For $i=1,2$, let $\cG_i$ be a minimal effective ample groupoid 
with Cantor unit space $X_i$, 
let $\Gamma_i$ be a group with 
$\sD(\cG_i)\subset\Gamma_i\subset\sF(\cG_i)$, 
and let $Y_i\subset X_i$ be an infinite closed nowhere dense subset 
on which $\St(\cG_i,Y_i)$ acts minimally. 
Let 
\[ 
\Phi:\St(\Gamma_1,Y_1)\longrightarrow\St(\Gamma_2,Y_2) 
\] 
be a group isomorphism. 
Then $\Phi$ is implemented by a homeomorphism $\phi:X_1\to X_2$ 
with $\phi(Y_1)=Y_2$, in the sense that 
\[ 
\Phi(\alpha)=\phi\circ\alpha\circ\phi^{-1} 
\qquad\text{for every }\alpha\in\St(\Gamma_1,Y_1). 
\] 
Moreover, $\cG_1$ and $\cG_2$ are Kakutani equivalent. 
If, in addition, each effective reduction 
$\cG_i{\downarrow}Y_i$ is Hausdorff, 
then $\cG_1{\downarrow}Y_1$ and $\cG_2{\downarrow}Y_2$ are isomorphic. 
\end{maintheorem}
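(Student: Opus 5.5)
We would prove Theorem~\ref{intro:rigidity} through a Rubin-type spatial reconstruction. The key input is that $\St(\Gamma_i,Y_i)$ contains the local groups $\sD(\cG_i)_U$ for every clopen set $U\subset X_i\setminus Y_i$; these are exactly the elements supported in $U$. Since $Y_i$ is nowhere dense, such $U$ form a base of $X_i\setminus Y_i$. The complement of $Y_i$ is dense and open, and the action of $\St(\Gamma_i,Y_i)$ on $X_i$ is locally dense there, because $\sD(\cG_i)$ acts minimally and the local groups $\sD(\cG_i)_U$ are nontrivial.

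\textbf{Step 1 (spatial realization).} We would apply Rubin's theorem to the faithful action of $\St(\Gamma_i,Y_i)$ on the regular open algebra of $X_i$. Alternatively, in the style of the Matui isomorphism theorem for topological full groups, we would characterize algebraically, for each $\alpha$, the subgroups $\{\beta:\supp\beta\subset\overline{\text{open set}}\}$, for example through double centralizers of elements or of involutions. This gives a homeomorphism $\phi_0$ between the Stone spaces of the regular open algebras that conjugates $\Phi$. One then needs to identify points of $X_i$ with suitable ultrafilters. This is routine at points of $X_i\setminus Y_i$, where local groups are rich. It is delicate at points of $Y_i$, where we would use minimality of the action on $Y_i$ together with infiniteness of $Y_i$. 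The aim is to show that each $y\in Y_i$ is determined by the system of clopen neighbourhoods $V\ni y$, and that the group elements supported in $V$ and preserving $Y_i$ act nontrivially near $y$. This uses the fact that $\cG_i|Y_i$ is minimal on an infinite set, so $Y_i$ is a Cantor set on which the stabilizer acts with no isolated points. We would then extend $\phi_0$ continuously to $\phi:X_1\to X_2$. Since $\phi$ conjugates the actions and $Y_i$ is characterized intrinsically, for instance as the complement of the union of supports of "local" elements avoiding the closed set, we obtain $\phi(Y_1)=Y_2$. We expect this step to be the main obstacle, specifically controlling the behaviour at points of $Y$, since the stabilizer is not locally dense on $Y$ in the usual sense. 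Our first attempt would be to use elements of $\St(\Gamma,Y)$ in $\sD(\cS_Y)$ supported on small clopen neighbourhoods of $y\in Y$ and acting minimally on $Y$ there, together with Proposition~\ref{propertyE}-type local generation.

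\textbf{Step 2 (Kakutani equivalence).} Pick a nonempty clopen $U_1\subset X_1\setminus Y_1$ and set $U_2=\phi(U_1)$. Conjugation by $\phi$ carries the local group $\sD(\cG_1)_{U_1}$, which equals $\sD(\cG_1|U_1)$, onto $\sD(\cG_2|U_2)$. By the isomorphism theorem for commutator subgroups of minimal effective ample groupoids (recovering the groupoid as the groupoid of germs), the restrictions $\cG_1|U_1$ and $\cG_2|U_2$ are isomorphic. Since $U_i$ is a full clopen set by minimality, $\cG_1$ and $\cG_2$ are Kakutani equivalent.

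\textbf{Step 3 (effective reductions).} Assume each $\cG_i{\downarrow}Y_i$ is Hausdorff. By Proposition~\ref{structureofSt}, or rather the general restriction homomorphism, which does not need almost finiteness, restriction gives a homomorphism $\St(\Gamma_i,Y_i)\to\sF(\cG_i{\downarrow}Y_i)$. The germs at points of $Y_i$ of elements of $\St(\Gamma_i,Y_i)$, restricted to $Y_i$, form a subset of $\cG_i{\downarrow}Y_i$. We need to show these germs generate the whole groupoid. This holds because every $Y$-preserving compact open bisection can be corrected by an element of $\sD$ to lie in $\Gamma_i$, using Lemma~\ref{localabelianization} to adjust the class away from $Y_i$. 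Since $\phi$ conjugates the stabilizers and maps $Y_1$ to $Y_2$, the map $[\alpha,y]\mapsto[\Phi(\alpha),\phi(y)]$ is well defined: germs on $Y_1$ agree iff the restricted germs agree, and this is preserved by conjugation. It is a homeomorphic groupoid isomorphism $\cG_1{\downarrow}Y_1\cong\cG_2{\downarrow}Y_2$.
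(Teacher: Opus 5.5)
\textbf{Step 1.} This step is the heart of the theorem, and it is not carried out. You locate the difficulty at points of $Y_i$ but do not resolve it, and the Rubin route stalls exactly there. Local density fails at every point of $Y_i$. The locally moving version of Rubin's theorem only yields an isomorphism of regular open algebras, i.e.\ your $\phi_0$ on their Stone spaces. To ``extend $\phi_0$ continuously'' to a map $X_1\to X_2$, you must know that ultrafilters converging to a common point are sent to ultrafilters converging to a common point, that is, that clopen sets are recognizable. The available argument for this uses cut-and-paste operations (restricting an element to an invariant clopen set, exchanging disjoint clopen sets). These preserve $\St(\cG,Y)$ but need not preserve $\St(\Gamma,Y)$ when $\Gamma\neq\sF(\cG)$.

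The missing idea is to verify Matui's class~F conditions (F0)--(F3) \cite[Definition 3.1, Theorem 3.5]{Ma15crelle} directly. Every required element is produced inside $\sD(\cG)$, so it lies in $\St(\Gamma,Y)$ as soon as it preserves $Y$. Your hint about elements of $\sD(\cS_Y)$ near $y$ points this way, but the construction is needed.
\begin{enumerate}
\item For (F1) at $x\in Y$ with a clopen $A\ni x$: $Y$ is a Cantor set, so minimality of $\St(\cG,Y)$ on $Y$ gives $\theta_U\in\St(\cG,Y)$, possibly outside $\Gamma$, with $\theta_U(x)\in(A\cap Y)\setminus\{x\}$. Hence $YU=UY$. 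Take a clopen $B_0\ni x$ with $B_0\sqcup\theta_U(B_0)\subset A$, and a bisection $Z$ with $x\in s(Z)$ and $s(Z)\sqcup r(Z)\subset B_0$. Then $\tau_{Us(Z)}\tau_{Ur(Z)}$ is an involution in $\sD(\cG)$ by Lemma~\ref{involution}. It preserves $Y$, and its support contains $x$ and lies in $A$.
\item For (F2), the same two-transposition trick cuts an involution $\alpha\in\St(\Gamma,Y)$ down to a piece of its support. The result agrees with $\alpha$ on an $\alpha$-invariant set, so it preserves $Y$.
\item For (F3), use three-cycles in a clopen set disjoint from $Y$.
\end{enumerate}
Once $\phi$ exists, $\phi(Y_1)=Y_2$ because $Y_i$ is the largest proper closed $\St(\Gamma_i,Y_i)$-invariant set. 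Indeed, orbits off $Y_i$ are dense, by Lemma~\ref{threecycle}~(3) applied to clopen sets disjoint from $Y_i$ together with nowhere density. Your proposed characterization refers to $Y_i$ itself and is not intrinsic.

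\textbf{Step 3.} This step does not work under the hypotheses of Theorem~\ref{intro:rigidity}. Lemma~\ref{localabelianization} needs $\cG$ to be almost finite or purely infinite, which is not assumed. Moreover, a $Y$-preserving compact open bisection must first be extended to a full $Y$-preserving bisection. That is the surjectivity in Proposition~\ref{structureofSt}, which rests on Lemma~\ref{elementofDG} and needs the same extra hypothesis. Also, correcting by an element of $\sD(\cG)$ does not change the class modulo $\Gamma$; the correction would have to come from $\sF(\cG)_W$.

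Instead, realize each germ by a three-cycle, as in Lemma~\ref{stabilizergerms}. For $g\in\cG|Y$ with $s(g)\neq r(g)$, choose $h\in\cG|Y$ with $s(h)=s(g)$ and $r(h)\notin\{s(g),r(g)\}$; this exists because $\cG|Y$-orbits are infinite. Choose $Y$-preserving bisections $U\ni g$ and $V\ni h$ with common source and mutually disjoint $s(U),r(U),r(V)$. Then $\tau_V\tau_U\in\sD(\cG)$ preserves $Y$ and agrees with $\theta_U$ on $s(U)$. Isotropy arrows follow from $g=(gh^{-1})h$.

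\textbf{Step 2.} You only know $\phi(\Gamma_1)_{U_1}\phi^{-1}=(\Gamma_2)_{U_2}$. Nothing guarantees that $\sD(\cG_1)_{U_1}$ is carried onto $\sD(\cG_2|U_2)$, and only the inclusion $\sD(\cG|U)\subset\sD(\cG)_U$ is available in general. There are two ways to fix this:
\begin{enumerate}
\item use $\sD(\cG_i|U_i)\subset(\Gamma_i)_{U_i}\subset\sF(\cG_i|U_i)$ and apply the isomorphism theorem for such intermediate groups \cite[Theorem 3.9]{Ma15crelle};
\item note that the germs of $(\Gamma_i)_{U_i}$ exhaust $\cG_i|U_i$, so $\phi|U_1$ itself induces $\cG_1|U_1\cong\cG_2|U_2$.
\end{enumerate}
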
 

Theorem~\ref{intro:rigidity} combines Theorems~\ref{spatial} 
and \ref{thm:stabilizerinvariants}. 
The proof verifies the local conditions for spatial reconstruction 
from \cite{Ma15crelle} using elements of the ambient commutator subgroup. 
The action outside $Y$ recovers the Kakutani equivalence class 
of the ambient groupoid, while restriction to $Y$ 
recovers the effective reduction when it is Hausdorff. 
For the latter assertion, local three-cycles identify the germs 
of the restricted stabilizer action with the effective reduction. 
When $\cG$ is minimal and AF, these invariants give a complete 
isomorphism criterion for a natural class of closed sets. 
Let $Y_1,Y_2$ be $\cG$-\'etale Cantor sets that are thin, 
that is, null for every $\cG$-invariant probability measure, 
and suppose that $\cG|Y_1$ and $\cG|Y_2$ are minimal. 
Then their stabilizers in $\sF(\cG)$, and likewise in $\sD(\cG)$, 
are isomorphic if and only if there is an order isomorphism 
$H_0(\cG|Y_1)\to H_0(\cG|Y_2)$ 
sending $[1_{Y_1}]$ to $[1_{Y_2}]$ 
(Corollary~\ref{cor:thinAFrealization}). 

To construct stabilizers with prescribed invariants, 
we next ask which groupoids can occur as reductions 
to closed nowhere dense subsets. 
We prove the following realization theorem 
without any almost finite or purely infinite assumption 
on the ambient groupoid. 

\begin{maintheorem}\label{intro:realization} 
Let $\cG$ be a minimal effective ample groupoid 
with Cantor unit space $X$, 
and let $\cH$ be any AF groupoid with compact unit space. 
Then there exists a closed nowhere dense $\cG$-\'etale subset 
$Y\subset X$ such that 
\[ 
\cH\cong\cG|Y=r^{-1}(Y)\cap s^{-1}(Y), 
\] 
where the full restriction has the relative topology. 
\end{maintheorem}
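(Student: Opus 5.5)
We realize $\cH$ through a Bratteli diagram and build $Y$ as an inverse limit of clopen sets inside $X$. Write $\cH$ as the groupoid of tail equivalence on the path space of a Bratteli diagram $(V,E)$. Any AF groupoid with compact unit space is isomorphic to such a tail groupoid, and we may assume each $V_n$ is finite. The unit space $\cH^{(0)}$ may be finite or have isolated points; the path space handles this. Concretely, $\cH=\bigcup_n\cK_n$ is an increasing union of compact open elementary subgroupoids. Each $\cK_n$ is a finite disjoint union of full matrix groupoids $\cCO$-indexed by $v\in V_n$: a partition of $\cH^{(0)}$ into clopen sets $C_{v,j}$, $1\le j\le h_v$ (paths ending at $v$), together with bisections identifying them. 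The task is to embed this tower structure into $\cG$ so that $r^{-1}(Y)\cap s^{-1}(Y)$ consists exactly of the images of these bisections, with the relative topology.

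\emph{Construction.} By induction on $n$ we choose the following data.
\begin{enumerate}
\item Nonempty clopen sets $A_{v,j}\subset X$ for $v\in V_n$, $1\le j\le h_v$, pairwise disjoint, with total union $B_n$.
\item Compact open bisections $U_{v,j,k}\subset\cG$ with $s(U_{v,j,k})=A_{v,k}$ and $r(U_{v,j,k})=A_{v,j}$, satisfying the matrix-unit relations.
\end{enumerate}
This gives an elementary subgroupoid $\cK'_n$ of $\cG$ isomorphic to $\cK_n$ "thickened". At step $n+1$, for each edge $e$ from $v\in V_n$ to $w\in V_{n+1}$ we choose a nonempty clopen subset $A^e_{v,1}\subset A_{v,1}$ and transport it along $U_{v,j,1}$. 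Taking these pieces disjoint, and nonempty by minimality and the Cantor property, we obtain $A_{w,\cdot}\subset B_n$ with the compatible bisections obtained by restriction. This step uses only the existence of nonempty clopen subsets, so no almost finiteness is needed. Set $Y=\bigcap_n B_n$. Then $Y$ is closed and homeomorphic to the path space via the nested clopen partitions, provided the diameters of the $A_{v,j}$ tend to $0$; we arrange this by refining at each step. We make $Y$ nowhere dense by also requiring, at each stage, that $B_{n+1}$ omit a nonempty clopen subset of every $A_{v,j}$. The restricted bisections $\bigcup_n\cK'_n\cap r^{-1}(Y)\cap s^{-1}(Y)$ then form an étale groupoid isomorphic to $\cH$.

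\emph{Main obstacle.} The hard part is to guarantee that no extra arrows of $\cG$ join points of $Y$, and that the relative topology on $r^{-1}(Y)\cap s^{-1}(Y)$ is the étale one. That is, we must ensure $Y$ is $\cG$-étale with full restriction equal to the image of $\cH$. To handle this, fix an enumeration $W_1,W_2,\dots$ of compact open bisections of $\cG$ forming a basis, which exists by second countability. At stage $n$ we handle $W_n$ together with all earlier ones: refine the pieces so that each pair $A_{v,j}$, $A_{w,k}$ either meets $W_m$ only inside $U_{v,j,k}$, or $W_m\cap s^{-1}(A_{w,k})\cap r^{-1}(A_{v,j})$ is avoided by the later choices of $B_{n+1}$. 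Effectiveness (germs determined by the homeomorphism) and Hausdorffness let us split $W_m$ into the clopen part where it agrees with $U_{v,j,k}$ and the part where $\theta_{W_m}\neq\theta_U$. The latter points are moved off the current pieces, and choosing $B_{n+1}$ small near such points removes those arrows. For arrows of $W_m$ with $r(\gamma)\in A_{v,j}$ and $s(\gamma)\in A_{w,k}$ lying in a different tower (or with $v=w$ but off the matrix unit), we shrink the next-level sets so that $\theta_{W_m}(B_{n+1}\cap s(W_m))\cap B_{n+1}$ is covered by the designated bisections. This is possible because we only need to pick small nonempty clopen sets avoiding the images of finitely many partial homeomorphisms with no fixed points on the relevant clopen pieces, and a Cantor set admits such choices. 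Points of $Y$ are limits of these choices, and the deficiency of closedness at the boundary is controlled because each condition is open-closed at its stage. Once every $W_m$ is handled, every arrow of $\cG$ between points of $Y$ lies in some $\cK'_n$. Moreover, a neighborhood of it in the relative topology is the restriction of a $U_{v,j,k}$, so $r^{-1}(Y)\cap s^{-1}(Y)$ is étale and isomorphic to $\cH$. Consequently $\cG|Y$ coincides with it by \eqref{eq:etalesubsetregularity}.
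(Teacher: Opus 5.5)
You take a different route from the paper, which first realizes $\mathcal R_{2^\infty}$ along the orbit of a single point and then quotes the AF realization lemma inside it. A direct embedding of the Bratteli diagram is plausible, but your exclusion step has a genuine gap: you use effectiveness as if it determined arrows pointwise.

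The splitting of $W_m$ over a pair of pieces into the clopen part $W_m\cap U_{v,j,k}$ and ``the part where $\theta_{W_m}\neq\theta_{U_{v,j,k}}$'' is not exhaustive. Let $E$ be the set of points $y\in\bigl(s(W_m)\cap s(U_{v,j,k})\bigr)\setminus s(W_m\cap U_{v,j,k})$ with $\theta_{W_m}(y)=\theta_{U_{v,j,k}}(y)$. This set is closed, and effectiveness only says that $E$ has empty interior, not that it is empty. For $y\in E$ the arrows $W_my$ and $U_{v,j,k}y$ have the same source and range, but differ by a nontrivial element of the isotropy group at $\theta_{U_{v,j,k}}(y)$. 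Your condition on $\theta_{W_m}(B_{n+1}\cap s(W_m))\cap B_{n+1}$ only records endpoints, so it cannot see these arrows. Moreover, the relevant partial homeomorphism $\theta_{U_{v,j,k}^{-1}W_m}$ fixes every point of $E$, so the ``no fixed points'' choice of small clopen sets is unavailable there. If $Y$ meets such a point, $r^{-1}(Y)\cap s^{-1}(Y)$ has nontrivial isotropy and cannot be isomorphic to the principal groupoid $\cH$. Already for $\cH$ a one-point groupoid, your construction yields $Y=\{y\}$ with no control on $\cG_y$; recall that $\cG$ is only assumed effective, not principal.

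The missing ingredient is the one the paper uses: anchor the construction at points with trivial isotropy. These exist, densely, because $\cG$ is second countable and effective, and any two points in the orbit of such a point are joined by a unique arrow. Choose the base points of the level-$(n+1)$ towers with trivial isotropy. Also choose them so that points of different towers are not joined by arrows of the compact set $C_{n+1}:=W_1\cup\dots\cup W_{n+1}$. This excludes only finitely many candidates, since $C_{n+1}$ meets each source fibre in finitely many arrows. Then the finite set $F$ of tower points carries no arrow of the compact set $C_{n+1}\setminus\cK'_{n+1}$. Compactness now lets you shrink the pieces around $F$ until $C_{n+1}\cap r^{-1}(B_{n+1})\cap s^{-1}(B_{n+1})\subset\cK'_{n+1}$, which then persists for $Y\subset B_{n+1}$. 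This is exactly the finite-stage observation in the proof of Proposition~\ref{prop:UHFrealization}.

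There is also a smaller omission in your tower step. The matrix units of a new tower at $w$ that join sub-pieces coming from different edges into $w$ are not restrictions of the old $U_{v,j,k}$. You must choose new connecting bisections by minimality and define those sub-pieces as their images; this is the role of $W_n$ in the paper.

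With these repairs, your route gives a self-contained proof for an arbitrary Bratteli diagram. The paper instead keeps the bookkeeping to a single orbit and a binary splitting, and delegates the general AF case to the known realization inside a minimal AF groupoid.
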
 

This is Corollary~\ref{cor:AFrealizationminimal}. 
We first realize the binary tail equivalence groupoid 
$\mathcal R_{2^\infty}$ inside $\cG$, 
and then use AF realization inside this groupoid 
\cite[Lemma 3.1]{Ma08JMSJ}. 
The first step is related to the nested clopen construction 
of Grigorchuk and Vorobets 
\cite[Lemma 7.1 and Proposition 7.2]{GV26ETDS}. 
In the first step, we also control every ambient arrow 
with both endpoints in the constructed Cantor set, 
identifying the full restriction with $\mathcal R_{2^\infty}$. 
After the second step, the full restriction to the resulting set $Y$ 
is isomorphic to $\cH$. 
The groupoid $\cH$ need not be minimal. 
For a Cantor minimal $\Z$ system, the converse holds as well: 
every regular reduction to a closed nowhere dense subset is AF 
(Proposition~\ref{prop:ZAFrealization}). 

Combining realization with rigidity gives $2^{\aleph_0}$ 
pairwise nonisomorphic stabilizers. 
More precisely, for every minimal effective ample groupoid $\cG$ 
with Cantor unit space, 
there is a family of nowhere dense $\cG$-\'etale Cantor sets 
$(Y_t)_{t\in\mathcal T}$, with $\#\mathcal T=2^{\aleph_0}$, 
such that, for every fixed 
$\sD(\cG)\subset\Gamma\subset\sF(\cG)$, 
the groups $\St(\Gamma,Y_t)$ are pairwise nonisomorphic. 
If $\cG$ is also almost finite or purely infinite, 
all these stabilizers are maximal in $\Gamma$ 
(Proposition~\ref{prop:uncountablestabilizers}). 
We obtain this family by prescribing UHF groupoids 
with different supernatural types. 
Grigorchuk and Vorobets construct uncountably many distinct 
closed sets with minimal stabilizer actions 
\cite[Proposition 7.2]{GV26ETDS}; 
the reduction invariants here distinguish 
the resulting stabilizers up to abstract group isomorphism. 

Theorem~\ref{intro:realization} also has a $C^*$-algebraic consequence. 
For every prescribed AF groupoid $\cH$ with compact unit space, 
the realization $\cH\cong\cG|Y$ gives an inclusion and a quotient 
\[ 
C_r^*(\cS_Y)\subset C_r^*(\cG),\qquad 
C_r^*(\cS_Y)\longrightarrow C_r^*(\cH), 
\] 
where the second map is surjective and carries 
the canonical diagonal $C(X)$ onto $C(\cH^{(0)})$ 
(Corollary~\ref{cor:AFsubquotients}). 
In particular, the CAR algebra $M_{2^\infty}$ occurs 
as a $C^*$-subquotient of $C_r^*(\cG)$. 
Its existence already follows abstractly from Glimm's theorem 
\cite{Gl61Annals}: Renault's simplicity theorem 
\cite[Corollary 4.6]{Re91JOT} makes $C_r^*(\cG)$ 
simple, and this separable unital infinite-dimensional algebra 
is not of type I. 
The realization above specifies an intermediate groupoid algebra 
and a restriction quotient for every such $\cH$, 
with compatibility on the canonical diagonals. 
It requires no amenability assumption on $\cG$. 

Belk, Bleak, Quick and Skipper \cite{BBQS25TAMS} use type systems 
to construct maximal subgroups of Thompson's group $V$, 
including an uncountable family of pairwise nonisomorphic examples. 
Their framework also includes closed-set stabilizers. 
The companion paper \cite{Ma26maximal3} studies wide open subgroupoids 
of irreducible SFT groupoids using subgroupoid graphs 
and models by countable graph groupoids. 
It proves a trichotomy for prime subgroupoids and develops 
maximality criteria for their topological full groups and normalizers, 
including the case of minimal subgroupoids. 
The $Y$-preserving subgroupoids studied here account for 
the case with a nowhere dense invariant set. 
That paper also extends the type-system constructions 
of \cite{BBQS25TAMS} to general irreducible SFT groupoids 
and constructs continuum many pairwise nonisomorphic 
maximal subgroups acting minimally on the whole unit space. 

Section~\ref{sec:preliminaries} recalls preliminaries on ample groupoids 
and their topological full groups. 
Section~\ref{sec:stabilizer} introduces the reductions, 
establishes Property E and proves the maximality criterion 
together with its relation to prime $Y$-preserving subgroupoids. 
Section~\ref{sec:stabilizerinvariants} proves spatial rigidity 
and the isomorphism invariants, and then develops realization 
and its consequences for maximal stabilizers and AF subquotients.

\section{Preliminaries}\label{sec:preliminaries} 

\subsection{Ample groupoids}\label{subsec:amplegroupoids} 

We fix notation and recall the basic terminology for ample groupoids. 

The cardinality of a set $A$ is written $\#A$ and 
the characteristic function of $A$ is written $1_A$. 
We say that a subset of a topological space $X$ is clopen 
if it is both closed and open. 
We let $\cCO(X)$ denote 
the set of all nonempty compact open subsets of $X$. 
A topological space is said to be totally disconnected 
if its connected components are singletons. 
By a Cantor set, 
we mean a nonempty compact, metrizable, totally disconnected space 
with no isolated points. 
It is known that any two such spaces are homeomorphic. 

In this paper, by an \'etale groupoid 
we mean a second countable locally compact Hausdorff groupoid 
such that the range map is a local homeomorphism. 
(We emphasize that our \'etale groupoids are always assumed to be Hausdorff, 
while non-Hausdorff groupoids are also studied actively.) 
We refer the reader to \cite{Re_text,Re08Irish} 
for background material on \'etale groupoids. 
For an \'etale groupoid $\cG$, 
we let $\cG^{(0)}$ denote the unit space and 
let $s$ and $r$ denote the source and range maps, 
i.e.\ $s(g)=g^{-1}g$, $r(g)=gg^{-1}$. 
A subset $U\subset\cG$ is called a bisection 
if $r|U$ and $s|U$ are injective. 
The \'etale groupoid $\cG$ has a basis for its topology 
consisting of open bisections. 
For $x\in\cG^{(0)}$, 
the set $r(s^{-1}(x))$ is called the orbit 
(or $\cG$-orbit) of $x$. 
When every orbit is dense in $\cG^{(0)}$, $\cG$ is said to be minimal. 
An \'etale groupoid $\cG$ is called ample 
if its unit space $\cG^{(0)}$ is totally disconnected. 
An \'etale groupoid is ample if and only if 
it has a basis for its topology consisting of compact open bisections. 

For $x\in\cG^{(0)}$, 
we write $\cG_x=r^{-1}(x)\cap s^{-1}(x)$ and 
call it the isotropy group of $x$. 
The isotropy subgroupoid of $\cG$ is 
$\Iso(\cG):=\{g\in\cG\mid r(g)=s(g)\}=\bigsqcup_{x\in\cG^{(0)}}\cG_x$. 
The interior $\Iso(\cG)^\circ$ is an open subgroupoid of $\cG$. 
We say that $\cG$ is principal if $\Iso(\cG)=\cG^{(0)}$. 
We say that $\cG$ is effective 
if the interior of $\Iso(\cG)$ is $\cG^{(0)}$. 
Note that the term essentially principal is also used 
in the literature. 
In this paper, unless specified otherwise, 
an ample groupoid is always assumed to be effective. 
A subgroupoid $\cH\subset\cG$ is said to be wide 
if $\cH^{(0)}=\cG^{(0)}$. 
A wide open subgroupoid of an ample groupoid is again ample, 
and is effective whenever the ambient groupoid is effective. 

For an ample groupoid $\cG$ and a clopen subset $Y\subset\cG^{(0)}$, 
we write 
\[ 
\cG|Y:=r^{-1}(Y)\cap s^{-1}(Y), 
\] 
which is again an ample groupoid, with unit space $Y$. 
Since $\cG$ is effective, so is $\cG|Y$. 
We say that $Y$ is full if $Y$ meets every $\cG$-orbit. 
Two ample groupoids $\cG_1$ and $\cG_2$ are said to be 
Kakutani equivalent 
if there exist full clopen subsets $Y_i\subset\cG_i^{(0)}$ 
such that $\cG_1|Y_1$ and $\cG_2|Y_2$ are isomorphic. 
In particular, $\cG$ and $\cG|Y$ are Kakutani equivalent 
for every full clopen subset $Y\subset\cG^{(0)}$.

\subsection{Topological full groups, homology and the index map} 
\label{subsec:homology} 

For a bisection $U\in\cCO(\cG)$, we write 
$\theta_U:=r\circ(s|U)^{-1}:s(U)\to r(U)$ 
for the associated homeomorphism. 
The bisection $U$ is said to be full 
when $r(U)=\cG^{(0)}=s(U)$; in this case 
$\theta_U\in\Homeo(\cG^{(0)})$. 

\begin{definition}[{\cite[Definition 2.3]{Ma12PLMS}}] 
\label{def:topologicalfullgroup} 
Let $\cG$ be an effective ample groupoid 
whose unit space $\cG^{(0)}$ is compact. 
The set of all $\alpha\in\Homeo(\cG^{(0)})$, 
for which there exists a full bisection $U\in\cCO(\cG)$ 
such that $\alpha=\theta_U$, 
is called the topological full group of $\cG$ 
and is denoted by $\sF(\cG)$. 
\end{definition} 

For $\alpha\in\sF(\cG)$ 
the compact open bisection $U$ as above uniquely exists, 
because $\cG$ is effective. 
Obviously $\sF(\cG)$ is a subgroup of $\Homeo(\cG^{(0)})$. 
Since $\cG$ is second countable, it has countably many compact open subsets, 
and so $\sF(\cG)$ is at most countable. 
A homeomorphism $\alpha:\cG^{(0)}\to\cG^{(0)}$ belongs to $\sF(\cG)$ 
if and only if for any $x\in\cG^{(0)}$ 
there exists a bisection $V\in\cCO(\cG)$ 
such that $x$ is in $s(V)$ and 
$\alpha$ equals $r\circ(s|V)^{-1}$ on a neighborhood of $x$. 
Thus, $\alpha$ is in $\sF(\cG)$ if and only if 
the `graph' of $\alpha$ is a clopen subset of $\cG$. 
We let $\sD(\cG)$ denote the derived subgroup (commutator subgroup) 
of $\sF(\cG)$.

For $\alpha\in\Homeo(\cG^{(0)})$ we write 
\[ 
\supp(\alpha):=\overline{\{x\in\cG^{(0)}\mid\alpha(x)\neq x\}}. 
\] 
When $\alpha=\theta_U$ belongs to $\sF(\cG)$, 
one has $\supp(\alpha)=s(U\setminus\cG^{(0)})$, 
and so $\supp(\alpha)$ is a clopen subset of $\cG^{(0)}$ 
by \cite[Lemma 2.2]{Ma15crelle}. 

\begin{definition}\label{def:localsubgroup} 
Let $\Gamma$ be a subgroup of $\sF(\cG)$ and 
let $V\subset\cG^{(0)}$ be a clopen subset. 
We call 
\[ 
\Gamma_V:=\{\alpha\in\Gamma\mid\supp(\alpha)\subset V\} 
\] 
the local subgroup of $\Gamma$ on $V$. 
\end{definition} 

Extending homeomorphisms by the identity outside $V$, we identify 
$\sF(\cG|V)$ with $\sF(\cG)_V$. 
Under this identification, 
$\sD(\cG|V)\subset\sD(\cG)_V$. 
We note that $\Gamma_{\beta(V)}=\beta\Gamma_V\beta^{-1}$ 
for any $\beta\in\sF(\cG)$ normalizing $\Gamma$ and 
any clopen set $V$, because 
$\supp(\beta\alpha\beta^{-1})=\beta(\supp(\alpha))$. 

We write $H_n(\cG)$ for the groupoid homology with integer 
coefficients and refer to \cite[Section 3]{Ma12PLMS} for its 
definition and basic properties. 
A compact open subset $A\subset\cG^{(0)}$ determines a class 
$[1_A]\in H_0(\cG)$, with 
\[ 
[1_{A\sqcup B}]=[1_A]+[1_B],\qquad 
[1_{r(U)}]=[1_{s(U)}] 
\] 
for disjoint compact open sets $A,B$ and compact open bisections $U$. 
In fact, $H_0(\cG)$ is generated by the classes of compact open sets, 
subject to these relations. 
The groups $H_n(\cG)$ are invariants of groupoid isomorphism 
and Kakutani equivalence. 
For an AF groupoid, $H_0$ is its dimension group; 
see \cite[Theorem 4.10]{Ma12PLMS}. 

For $\alpha\in\sF(\cG)$, let $U$ be its unique compact open 
full bisection. 
The assignment 
\[ 
I:\sF(\cG)\longrightarrow H_1(\cG),\qquad I(\alpha):=[1_U] 
\] 
is a homomorphism, called the index map 
\cite[Section 7]{Ma12PLMS}. 
In particular, $\sD(\cG)\subset\Ker I$. 
For an abstract group $\Lambda$, we use 
$H_1(\Lambda)=\Lambda/[\Lambda,\Lambda]$ for its abelianization.

\subsection{Almost finite and purely infinite groupoids} 

We recall the classes of groupoids and local properties 
used in the stabilizer arguments. 

For an ample groupoid $\cG$ with compact unit space, 
a compact open principal subgroupoid $\cK\subset\cG$ 
with $\cK^{(0)}=\cG^{(0)}$ is called an elementary subgroupoid. 
We write $\cK x:=\{g\in\cK\mid s(g)=x\}$. 
We say that $\cG$ is AF if it is the union of an increasing 
sequence of elementary subgroupoids. 
In particular, every AF groupoid is principal. 

\begin{definition}[{\cite[Definition 6.2]{Ma12PLMS}}]\label{af/def} 
Let $\cG$ be an ample groupoid with $\cG^{(0)}$ compact. 
We say that $\cG$ is almost finite 
if for any compact subset $C\subset\cG$ and $\ep>0$ 
there exists an elementary subgroupoid $\cK\subset\cG$ such that 
\[ 
\frac{\#(C\cK x\setminus\cK x)}{\#(\cK x)}<\ep 
\] 
for all $x\in\cG^{(0)}$. 
\end{definition} 

Thus the finite fibers $\cK x$ are approximately invariant 
under multiplication by $C$. 
Every AF groupoid is almost finite. 

\begin{definition}[{\cite[Definition 4.9]{Ma15crelle}}]\label{pi/def} 
Let $\cG$ be an ample groupoid with $\cG^{(0)}$ compact. 
We say that $\cG$ is purely infinite 
if for every clopen set $A\subset\cG^{(0)}$, 
there exist compact open bisections $U,V\subset\cG$ 
such that $s(U)=s(V)=A$, $r(U)\sqcup r(V)\subset A$. 
\end{definition} 

Almost finiteness is invariant under Kakutani equivalence 
for ample groupoids with compact unit spaces 
by \cite[Proposition 4.6]{ABBL23ETDS}. 
If $\cG$ is minimal and either almost finite or purely infinite, 
then so is $\cG|W$ for every $W\in\cCO(\cG^{(0)})$: 
in the almost finite case this follows since $W$ is full, 
and in the purely infinite case directly from the definition. 

We recall two properties used in Section~\ref{subsec:propertyE}. 

\begin{theorem}\label{niceproperties} 
Let $\cG$ be a minimal ample groupoid with Cantor unit space 
which is either almost finite or purely infinite. 
\begin{enumerate} 
\item Any non-trivial subgroup of $\sF(\cG)$ normalized by 
$\sD(\cG)$ contains $\sD(\cG)$. 
In particular, $\sD(\cG)$ is simple. 
\item The groupoid $\cG$ has cancellation: for nonempty clopen 
sets $A,B\subset\cG^{(0)}$ with $[1_A]=[1_B]$ in $H_0(\cG)$, 
there exists a compact open bisection $U$ with $r(U)=A$ and $s(U)=B$. 
\end{enumerate} 
\end{theorem}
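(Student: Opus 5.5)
Both statements are known results, and the plan is to assemble them from the literature rather than reprove them from scratch, sketching the mechanism in each case. Throughout, $\cG$ is minimal with Cantor unit space, so every nonempty clopen set is full. I am citing the specific results from memory, so the numbering should be checked against the sources.

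For (1), the plan is to invoke the structure theorem for the commutator subgroup. In the almost finite case this is \cite[Theorem 4.7]{Ma12PLMS}, which is stated for almost finite minimal groupoids via the reduction to elementary subgroupoids. In the purely infinite case it is \cite[Theorem 4.16]{Ma15crelle}. In both, the argument runs as follows.
\begin{enumerate}
\item Take a nontrivial $N\subset\sF(\cG)$ normalized by $\sD(\cG)$ and pick $\alpha\in N\setminus\{\id\}$.
\item Choose a small clopen set $A$ with $\alpha(A)\cap A=\emptyset$.
\item Commutators $[\alpha,\beta]$ with $\beta\in\sD(\cG)_A$ lie in $N$ and produce all elements of $\sD(\cG)_A$ of the form $\beta\alpha\beta^{-1}\alpha^{-1}$. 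Hence $N$ contains every three-cycle, that is, every element of order three permuting three disjoint clopen sets, supported in $A$.
\item By minimality, together with almost finiteness or pure infiniteness, $\sD(\cG)$ is generated by such three-cycles with small supports. This generation statement is the technical heart of both cited theorems.
\item Conjugating by $\sD(\cG)$ moves three-cycles with support in $A$ to ones with arbitrary small support, so $N\supset\sD(\cG)$.
\end{enumerate}
Simplicity of $\sD(\cG)$ follows by applying this to a nontrivial normal subgroup of $\sD(\cG)$.

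For (2), cancellation, the two cases split.

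In the purely infinite case, \cite[Proposition 4.11]{Ma15crelle} gives the following. For nonempty clopen $A,B$, the identity $[1_A]=[1_B]$ in $H_0(\cG)$ yields a compact open bisection from $B$ onto $A$. The proof there uses pure infiniteness to realize $H_0(\cG)$ as the set of clopen classes, in the spirit of Cuntz's $K_0$-argument for purely infinite algebras.

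In the almost finite minimal case, the plan uses \cite[Lemma 6.?]{Ma12PLMS}, the part of the proof of \cite[Theorem 6.?]{Ma12PLMS} showing that $H_0(\cG)$ is a simple dimension-type group with comparison. The steps are:
\begin{enumerate}
\item From $[1_A]=[1_B]$, write $1_A-1_B$ as a finite sum of terms $1_{s(U_i)}-1_{r(U_i)}$.
\item Choose an elementary subgroupoid $\cK$ that almost contains the finitely many bisections $U_i$, in the sense of Definition~\ref{af/def}.
\item Compare the traces of $A$ and $B$ on the towers of $\cK$ to get a bisection from $B$ into $A$ covering all but an error set of small measure.
\item Absorb the error using the strict comparison that minimality provides, iterating if needed.
\end{enumerate}

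I expect the main obstacle, were one to reprove rather than cite, to be this last step: the exact (rather than approximate) matching in the almost finite case, which needs comparison for clopen sets of small measure. Since the paper treats Theorem~\ref{niceproperties} as recalled background, the proof will consist of these citations.
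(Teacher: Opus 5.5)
Your plan matches the paper, which also proves this theorem purely by citation, but the references need correcting: the paper cites \cite[Theorems 4.7 and 4.16]{Ma15crelle} for both cases of (1), and for (2) it cites \cite[Theorem 6.12]{Ma12PLMS} in the almost finite case and \cite[Theorem 4.2 (3)]{Ma16Adv} in the purely infinite case, which should replace your attribution of Theorem 4.7 to \cite{Ma12PLMS}, your ``6.?'' placeholders and \cite[Proposition 4.11]{Ma15crelle}. Your mechanism sketches are not needed and are loose as written, so do not present them as the content of the cited proofs: for $\beta\in\sD(\cG)_A$ the commutator $\beta\alpha\beta^{-1}\alpha^{-1}$ is supported on $A\sqcup\alpha(A)$, not in $\sD(\cG)_A$, so one needs the double commutator $[[\beta,\alpha],\gamma]=[\beta,\gamma]$ with $\gamma\in\sD(\cG)_A$ plus a separate argument placing three-cycles in $[\sD(\cG)_A,\sD(\cG)_A]$; and strict comparison applied repeatedly to leftover pieces of equal measure never terminates in a single compact open bisection, so exact cancellation must come from the cited result.
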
 

\begin{proof} 
The first assertion follows from 
\cite[Theorems 4.7 and 4.16]{Ma15crelle}. 
The second follows from \cite[Theorem 6.12]{Ma12PLMS} and 
\cite[Theorem 4.2 (3)]{Ma16Adv}. 
\end{proof} 

We next recall transpositions and three-cycles. 
For a compact open bisection $U$ with $r(U)\cap s(U)=\emptyset$, 
define $\tau_U:=\theta_W\in\sF(\cG)$, where 
\[ 
W:=U\sqcup U^{-1}\sqcup 
\left(\cG^{(0)}\setminus(r(U)\cup s(U))\right). 
\] 
Thus $\tau_U$ is a transposition, with $\tau_U^2=1$ and 
$\supp(\tau_U)=r(U)\sqcup s(U)$. 
We also record two local constructions that do not require 
almost finiteness or pure infiniteness. 

\begin{lemma}\label{threecycle} 
Let $\cH$ be an ample groupoid with $\cH^{(0)}$ a Cantor set 
and let $U,V\in\cCO(\cH)$ be bisections with $s(U)=s(V)$ 
such that $s(U)$, $r(U)$ and $r(V)$ are mutually disjoint. 
\begin{enumerate} 
\item The element $\tau_V\tau_U$ equals the commutator $[\tau_U,\tau_V]$ 
and has order three, with 
\[ 
\supp(\tau_V\tau_U)=s(U)\cup r(U)\cup r(V). 
\] 
In particular $\tau_V\tau_U\in\sD(\cH)$. 
\item The element $\tau_V\tau_U$ agrees with $\theta_U$ on $s(U)$. 
\item Suppose in addition that $\cH$ is minimal. 
For any $x\in\cH^{(0)}$ and any $A\in\cCO(\cH^{(0)})$, 
there exists $\sigma\in\sD(\cH)$ such that $\sigma(x)\in A$. 
Consequently, for any $B\in\cCO(\cH^{(0)})$, 
the local subgroup $\sD(\cH)_B$ acts minimally on $B$, 
and $\sD(\cH)$ acts minimally on $\cH^{(0)}$. 
\end{enumerate} 
\end{lemma}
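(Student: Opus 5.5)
Parts (1) and (2) are a direct computation with the bisections, and part (3) uses minimality together with part (1).

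\textbf{Notation.} Put $A_0=s(U)=s(V)$, $A_1=r(U)$, $A_2=r(V)$. These are mutually disjoint compact open sets. Then $\tau_U$ swaps $A_0$ and $A_1$ via $\theta_U^{\pm1}$ and fixes everything else. Likewise $\tau_V$ swaps $A_0$ and $A_2$ via $\theta_V^{\pm1}$.

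\textbf{Parts (1) and (2).} I will trace $\tau_V\tau_U$ through the three sets.
\begin{itemize}
\item For $x\in A_0$: $\tau_U(x)=\theta_U(x)\in A_1$, which $\tau_V$ fixes. So $\tau_V\tau_U=\theta_U$ on $A_0$, which is (2).
\item For $y\in A_1$: $\tau_U(y)=\theta_U^{-1}(y)\in A_0$, and then $\tau_V$ sends it to $\theta_V\theta_U^{-1}(y)\in A_2$.
\item For $z\in A_2$: $\tau_U$ fixes $z$, and $\tau_V$ sends it to $\theta_V^{-1}(z)\in A_0$.
\item Outside $A_0\cup A_1\cup A_2$ both maps are the identity.
\end{itemize}
Hence $\tau_V\tau_U$ cyclically permutes $A_0\to A_1\to A_2\to A_0$. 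Going once around, every point returns to itself:
\begin{itemize}
\item on $A_0$: $\theta_V^{-1}\theta_V\theta_U^{-1}\theta_U=\id$;
\item on $A_1$ and $A_2$ the same cancellation occurs.
\end{itemize}
So the element has order three. It moves every point of each nonempty $A_i$, so its support is exactly $A_0\cup A_1\cup A_2$.

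Since $\tau_U^2=\tau_V^2=1$, we have $[\tau_U,\tau_V]=\tau_U\tau_V\tau_U^{-1}\tau_V^{-1}=\tau_U\tau_V\tau_U\tau_V$, which is $(\tau_U\tau_V)^2$. This equals $(\tau_U\tau_V)^{-1}=\tau_V\tau_U$, because $\tau_U\tau_V=(\tau_V\tau_U)^{-1}$ also has order three. Therefore $\tau_V\tau_U\in\sD(\cH)$.

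\textbf{Part (3): moving $x$ into $A$.} If $x\in A$, take $\sigma=1$. Otherwise, minimality gives an arrow $g$ with $s(g)=x$ and $r(g)\in A$. Because $\cH^{(0)}$ has no isolated points and $\cH$ is Hausdorff with a basis of compact open bisections, choose a compact open bisection $U\ni g$ so small that:
\begin{itemize}
\item $r(U)\subset A$;
\item $s(U)\cap r(U)=\emptyset$, which is possible since $x\neq r(g)$.
\end{itemize}
Next I need a second bisection $V$ with $s(V)\subset s(U)$ and $r(V)$ disjoint from $s(U)\cup r(U)$; then I shrink $U$ so that $s(U)=s(V)$.
\begin{itemize}
\item Minimality makes the orbit of $x$ dense, and since there are no isolated points it is infinite. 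So there is an arrow $h$ with $s(h)=x$ and $r(h)\notin\{x,r(g)\}$.
\item Take a small compact open bisection $V\ni h$ with $r(V)$ disjoint from a small neighbourhood of $\{x,r(g)\}$.
\item Set $s(V)=s(U)=C$ for a small compact open neighbourhood $C$ of $x$, and restrict $U$ and $V$ to $C$.
\end{itemize}
With all three sets disjoint, part (1) gives $\sigma=\tau_V\tau_U\in\sD(\cH)$, and part (2) gives $\sigma(x)=\theta_U(x)=r(g)\in A$.

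\textbf{Part (3): the consequences.} For $B\in\cCO(\cH^{(0)})$, apply the statement just proved to the groupoid $\cH|B$. It is minimal, effective, ample, and has Cantor unit space $B$. Its commutator subgroup satisfies $\sD(\cH|B)\subset\sD(\cH)_B$. So for every $x\in B$ and every nonempty clopen $A\subset B$, some element of $\sD(\cH)_B$ moves $x$ into $A$; that is, every orbit of $\sD(\cH)_B$ in $B$ is dense. Taking $B=\cH^{(0)}$ gives the last claim.

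The main obstacle is the choice of the second bisection $V$ with the required disjointness. It relies on the orbit of $x$ being infinite, which follows from minimality on a Cantor set. The rest is bookkeeping.
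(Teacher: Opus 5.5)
Your proposal is correct and follows the paper's argument: (1) and (2) are the direct computation that the paper calls clear, and (3) builds the three-cycle $\tau_V\tau_U$ from an arrow $g$ into $A$ and an auxiliary arrow from $x$ that lands away from $\{x,r(g)\}$, then applies the result to $\cH|B$. The only difference is cosmetic: the paper finds the auxiliary arrow by making $s(U)\cup r(U)$ proper and using minimality to hit its complement, whereas you use that the orbit of $x$ is infinite, and when shrinking to $C$ you should say explicitly that both $C$ and $\theta_U(C)$ lie in the neighbourhood of $\{x,r(g)\}$ that $r(V)$ avoids.
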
 

\begin{proof} 
(1) and (2) are clear. 

(3)\: 
We may assume $x\notin A$. 
By minimality there exists $g\in\cH$ with $s(g)=x$ and $r(g)\in A$. 
Choose a bisection $U\in\cCO(\cH)$ with $g\in U$, 
$r(U)\subset A$, $r(U)\cap s(U)=\emptyset$ and 
$s(U)\cup r(U)\neq\cH^{(0)}$; 
the last condition can be arranged by shrinking $s(U)$, 
because $\cH^{(0)}$ has no isolated points. 
Applying minimality again, choose an arrow with source $x$ 
and range in $\cH^{(0)}\setminus(s(U)\cup r(U))$, 
and a compact open bisection $V$ containing it with range in this complement. 
Restricting $U$ and $V$ to a common clopen neighborhood of $x$ 
in $s(U)\cap s(V)$, we may assume $s(U)=s(V)$. 
Then $\sigma:=\tau_V\tau_U$ belongs to $\sD(\cH)$ by (1) 
and satisfies $\sigma(x)=\theta_U(x)\in A$ by (2). 
The last assertion follows by applying this to $\cH|B$. 
\end{proof} 

\begin{lemma}\label{involution} 
Let $\cG$ be an ample groupoid with compact unit space, 
and let $V_1,V_2\in\cCO(\cG)$ be bisections 
satisfying $r(V_j)\cap s(V_j)=\emptyset$ for $j=1,2$. 
Suppose that $\supp(\tau_{V_1})$ and $\supp(\tau_{V_2})$ are disjoint 
and that there is a compact open bisection $Z$ 
with $s(Z)=s(V_1)$ and $r(Z)=s(V_2)$. 
Then $\tau_{V_1}\tau_{V_2}$ is an involution belonging to $\sD(\cG)$, 
and 
\[ 
\supp(\tau_{V_1}\tau_{V_2}) 
=\supp(\tau_{V_1})\sqcup\supp(\tau_{V_2}) 
=s(V_1)\sqcup r(V_1)\sqcup s(V_2)\sqcup r(V_2). 
\] 
\end{lemma}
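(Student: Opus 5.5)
The plan is to write $\tau_{V_1}\tau_{V_2}$ as a product of commutators by conjugating one transposition into the other. The involution property and the support formula are immediate. Since $\supp(\tau_{V_1})$ and $\supp(\tau_{V_2})$ are disjoint, $\tau_{V_1}$ and $\tau_{V_2}$ commute. Each is an involution, so the product squares to the identity and is nontrivial. Its support is the union of the two supports, and each of these equals $s(V_j)\sqcup r(V_j)$ by the description of transpositions above.

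For membership in $\sD(\cG)$, I would find $\beta\in\sF(\cG)$ with $\beta\tau_{V_1}\beta^{-1}=\tau_{V_2}$. Then
\[
\tau_{V_1}\tau_{V_2}=\tau_{V_1}\beta\tau_{V_1}^{-1}\beta^{-1}=[\tau_{V_1},\beta],
\]
using $\tau_{V_1}=\tau_{V_1}^{-1}$, and this is a commutator. To build $\beta$, I set
\[
Z':=V_2 Z V_1^{-1}.
\]
This is a compact open bisection with $s(Z')=r(V_1)$ and $r(Z')=r(V_2)$. Now let $\beta$ be the transposition exchanging $s(V_1)$ with $s(V_2)$ via $Z$ and $r(V_1)$ with $r(V_2)$ via $Z'$. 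Formally, $\beta=\theta_W$ with
\[
W:=Z\sqcup Z^{-1}\sqcup Z'\sqcup Z'^{-1}\sqcup\left(\cG^{(0)}\setminus(s(V_1)\sqcup r(V_1)\sqcup s(V_2)\sqcup r(V_2))\right).
\]
The four sets $s(V_j),r(V_j)$ are pairwise disjoint, so $W$ is a compact open full bisection. Equivalently, $\beta=\tau_Z\tau_{Z'}$, a product of commuting transpositions.

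Next I would check $\beta\tau_{V_1}\beta^{-1}=\tau_{V_2}$ at the level of bisections. Write $W_1$ for the full bisection of $\tau_{V_1}$. Conjugating gives $WW_1W^{-1}$. On the part $V_1$ (from $s(V_1)$ to $r(V_1)$) this becomes $Z'V_1Z^{-1}=V_2ZV_1^{-1}V_1Z^{-1}=V_2$. On $V_1^{-1}$ it becomes $V_2^{-1}$. On the complement of $\supp(\tau_{V_1})\cup\supp(\tau_{V_2})$ it stays the identity. Finally, $\beta$ maps $\supp(\tau_{V_2})$ onto $\supp(\tau_{V_1})$, where $\tau_{V_1}$ acts nontrivially, so on $\supp(\tau_{V_1})$ itself the conjugate is the identity. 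Hence the conjugate has full bisection equal to that of $\tau_{V_2}$, as required.

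The only real care needed is in this bookkeeping, namely verifying that $Z'$ has the stated source and range and that the bisection products compose in the correct order; everything else is routine.
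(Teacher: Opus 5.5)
Your proof is correct and is the paper's argument: the paper simply observes that $Z$ makes $\tau_{V_1}$ and $\tau_{V_2}$ conjugate involutions in $\sF(\cG)$. You make this explicit with the conjugator $\beta=\tau_Z\tau_{Z'}$, where $Z'=V_2ZV_1^{-1}$, and the identity $\tau_{V_1}\tau_{V_2}=[\tau_{V_1},\beta]$. One wording slip in your last check: $\beta^{-1}$ carries $\supp(\tau_{V_1})$ into $\supp(\tau_{V_2})$, where $\tau_{V_1}$ acts \emph{trivially}, not nontrivially, and that is why the conjugate is the identity on $\supp(\tau_{V_1})$.
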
 

\begin{proof} 
The bisection $Z$ shows that $\tau_{V_1}$ and $\tau_{V_2}$ 
are conjugate involutions in $\sF(\cG)$, 
so their product belongs to $\sD(\cG)$. 
The remaining assertions are clear. 
\end{proof}

The following two lemmas are proved in \cite{Ma26maximal1}. 
We recall their statements for use below. 

\begin{lemma}[{\cite[Lemma 2.9]{Ma26maximal1}}] 
\label{localabelianization} 
Let $\cG$ be a minimal ample groupoid with Cantor unit space 
which is either almost finite or purely infinite. 
For every nonempty clopen set $W\subset\cG^{(0)}$, one has 
\[ 
\sF(\cG)=\sF(\cG)_W\cdot\sD(\cG). 
\] 
Equivalently, the inclusion $\sF(\cG|W)\to\sF(\cG)$ induces 
a surjection on abelianizations. 
\end{lemma}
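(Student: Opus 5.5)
The plan is to show that every $\alpha\in\sF(\cG)$ can be written as $\beta\delta$ with $\beta\in\sF(\cG)_W$ and $\delta\in\sD(\cG)$. Two facts would do this: the classes represented by elements of $\sF(\cG)_W$ should exhaust $H_1(\sF(\cG))$, and every element should be, modulo $\sD(\cG)$, a product of elements with small support that can then be moved into $W$. The stated equivalence with surjectivity on abelianizations is immediate once the factorization holds. It uses the identification $\sF(\cG|W)=\sF(\cG)_W$ and the observation that $\sF(\cG)_W\sD(\cG)$ is exactly the preimage of the image of $\sF(\cG)_W$ in the abelianization.

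\emph{Step 1: reduction to small supports.} Fix $\alpha\in\sF(\cG)$ with full bisection $U$. Choose a partition of $X$ into clopen sets $A_1,\dots,A_n$ that is fine enough that $\alpha$ is given on each $A_k$ by a single small bisection. The aim is to write $\alpha$ modulo $\sD(\cG)$ as a product of elements each supported in a small clopen set; a set is small if it is contained in a clopen set $B$ whose complement is large.

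In the purely infinite case, I would use the structure theory of \cite{Ma15crelle}. Every element is a product of elements supported in proper clopen sets, and in fact $\sF(\cG)$ is generated by transpositions and elements with small support. Then one can compress: by pure infiniteness, any clopen $B$ admits a bisection $Z$ with $s(Z)=B$ and $r(Z)\subset W$. Extend $Z$ to a transposition-type element $\gamma\in\sF(\cG)$ (when $B\cap W=\emptyset$, use $\tau_Z$ restricted appropriately) with $\gamma(B)\subset W$. Then $\gamma\beta\gamma^{-1}\in\sF(\cG)_W$, and it is congruent to $\beta$ modulo $\sD(\cG)$.

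In the almost finite case, I would instead use cancellation from Theorem~\ref{niceproperties}(2) to find such conjugators. Whenever $[1_B]\le$ some multiple relation allows, there is a bisection from $B$ into $W$. Minimality together with almost finiteness gives arbitrarily small clopen pieces, namely Kakutani--Rokhlin towers from an elementary subgroupoid. Each piece of $\alpha$ along a tower can be rearranged within a tower of small height.

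\emph{Step 2: conjugating into $W$.} Suppose $\beta$ has support in a clopen $B$, and there is a compact open bisection $Z$ with $s(Z)=B$ and $r(Z)\subset W$, where $r(Z)\cap B=\emptyset$ or $B\subset W$. Set $\gamma=\tau_{Z'}$ for a suitable disjoint piece $Z'$. Then $\gamma\beta\gamma^{-1}$ lies in $\sF(\cG)_W$, and $\gamma\beta\gamma^{-1}\beta^{-1}\in\sD(\cG)$. The product of the resulting elements of $\sF(\cG)_W$ stays in $\sF(\cG)_W$, and the errors lie in $\sD(\cG)$, which is normal. This gives $\alpha\in\sF(\cG)_W\sD(\cG)$.

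\emph{Main obstacle.} The hard part is Step 1: decomposing an arbitrary $\alpha$, modulo commutators, into factors whose supports fit into $W$. This matters most when $W$ is very small relative to every invariant measure in the almost finite case. I would first try the generation result that $\sF(\cG)$ is generated by transpositions together with $\sD(\cG)$ up to the index map. That approach appears to need the description of $H_1(\sF(\cG))$ via the AH-type exact sequence.

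Failing that, I would factor $\alpha$ directly. In the almost finite case, choose an elementary subgroupoid $\cK$ that is nearly invariant under the compact set $U$. Then $\alpha$ agrees with an element of $\sF(\cK)$ off a set of small measure. By the cited $\ep$-condition, that exceptional set can be covered by a clopen set whose $H_0$-class is dominated by $[1_W]$. Elements of $\sF(\cK)$ are products of transpositions within towers, and each transposition is conjugate in $\sF(\cG)$ to one supported inside $W$ by cancellation.
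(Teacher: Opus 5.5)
Your plan can be completed, but as written the almost finite case rests on a false step: ``each transposition [of $\sF(\cK)$] is conjugate in $\sF(\cG)$ to one supported inside $W$.'' Conjugation by $\eta\in\sF(\cG)$ sends $\supp(\tau)$ to $\eta(\supp(\tau))$ and preserves every $\cG$-invariant probability measure $\mu$. So $\mu(\supp(\tau))\leq\mu(W)$ is necessary for $\tau$ to be conjugate into $\sF(\cG)_W$. The transpositions generating $\sF(\cK)$ exchange two whole levels of a tower of $\cK$, and nothing prevents their supports from having measure far larger than $\mu(W)$; for a tower of height two the support is all of $\cG^{(0)}$. The missing step is to cut them up. Write the bisection as $V=V_1\sqcup\dots\sqcup V_k$ with $\sup_\mu\mu(s(V_j)\sqcup r(V_j))<\ep$, the supremum taken over all invariant probability measures, so that $\tau_V=\tau_{V_1}\cdots\tau_{V_k}$ with commuting factors. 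Such partitions exist because invariant measures are atomless (all orbits are infinite) and form a weak$^*$ compact set, so a Dini argument gives clopen partitions of uniformly small measure.

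You also name the wrong tool for the conjugation. Cancellation, Theorem~\ref{niceproperties}~(2), only applies to sets with \emph{equal} $H_0$-classes. Turning ``$[1_B]$ is dominated by $[1_W]$'' into a bisection from $B$ into $W$ is exactly the comparison property of almost finite groupoids: if $\mu(B)<\mu(B')$ for every invariant $\mu$, some compact open bisection has source $B$ and range inside $B'$ (see \cite{Ma12PLMS}). Choose $\ep<\frac12\inf_\mu\mu(W)$, which is positive by minimality and compactness, and apply comparison with $B'=W\setminus B$. Then $r(Z)\cap s(Z)=\emptyset$, so $\tau_Z$ is defined and $\tau_Z\beta\tau_Z\in\sF(\cG)_W$.

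With these repairs, your final paragraph is a complete, homology-free proof in the almost finite case. Extend $U\cap\cK$ to a full bisection of the elementary groupoid $\cK$ to get $\gamma\in\sF(\cK)$ with $\supp(\gamma^{-1}\alpha)\subset s(U\setminus\cK)$. Integrating the F{\o}lner condition over the $\cK$-orbits gives $\mu(s(U\setminus\cK))<\ep$ for every invariant $\mu$. Then $\gamma$ and $\gamma^{-1}\alpha$ are products of elements of uniformly small support, each conjugate into $\sF(\cG)_W$, and conjugates agree modulo $\sD(\cG)$. The earlier almost finite paragraph of Step~1 (``some multiple relation'', ``rearranged within a tower of small height'') is not an argument and should be replaced by this. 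Neither the index map nor the AH sequence is needed.

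The purely infinite case works, but two details should be made explicit. First, Step~1 needs no structure theory. If $\supp(\alpha)=\cG^{(0)}$, pick a small clopen $A$ with $\alpha(A)\cap A=\emptyset$ and put $\sigma:=\tau_{UA}$. Then $\alpha=\sigma\cdot(\sigma\alpha)$, where $\supp(\sigma)=A\sqcup\alpha(A)$ is proper and $\sigma\alpha$ fixes $A$ pointwise. Second, in Step~2, $\tau_Z$ requires $r(Z)\cap B=\emptyset$, so aim at $W\setminus B$. If $W\subset B$, first move $B$ by a transposition into $\cG^{(0)}\setminus B$ and then into $W$. This uses the fact that in a minimal purely infinite groupoid every clopen set admits a bisection into any nonempty clopen set.
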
 

\begin{lemma}[{\cite[Lemma 2.10 (2)]{Ma26maximal1}}] 
\label{elementofDG} 
Let $\cG$ be a minimal ample groupoid with Cantor unit space 
which is either almost finite or purely infinite. 
If a bisection $U\in\cCO(\cG)$ satisfies 
$r(U)\neq\cG^{(0)}$ and $s(U)\neq\cG^{(0)}$, then 
there exists a full bisection $V$ such that 
$U\subset V$ and $\theta_V\in\sD(\cG)$. 
\end{lemma}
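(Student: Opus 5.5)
The plan is to first extend $U$ to some full bisection, and then correct the resulting element of $\sF(\cG)$ by an element supported away from $r(U)$ so that it lands in $\sD(\cG)$. The second step uses Lemma~\ref{localabelianization}, and the correction does not disturb the arrows of $U$.

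\emph{Step 1: extend $U$ to a full bisection.} Write $X=\cG^{(0)}$. Put $A:=X\setminus r(U)$ and $B:=X\setminus s(U)$; both are nonempty clopen sets by hypothesis. In $H_0(\cG)$ we have $[1_{r(U)}]=[1_{s(U)}]$ because $U$ is a compact open bisection. Hence
\[
[1_A]=[1_X]-[1_{r(U)}]=[1_X]-[1_{s(U)}]=[1_B].
\]
By the cancellation property in Theorem~\ref{niceproperties}(2), there is a compact open bisection $W$ with $r(W)=A$ and $s(W)=B$. Then $V_0:=U\sqcup W$ is a compact open bisection with $r(V_0)=s(V_0)=X$, so it is full and contains $U$. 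Set $\alpha:=\theta_{V_0}\in\sF(\cG)$.

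\emph{Step 2: correct $\alpha$ into $\sD(\cG)$.} Apply Lemma~\ref{localabelianization} to the nonempty clopen set $A=X\setminus r(U)$. It gives $\gamma\in\sF(\cG)_A$ and $\delta\in\sD(\cG)$ with $\alpha=\gamma\delta$. Let $V$ be the full bisection of $\delta=\gamma^{-1}\alpha$. Writing $V_\gamma$ for the full bisection of $\gamma$, we have $V=V_\gamma^{-1}V_0$ by uniqueness of full bisections, which holds since $\cG$ is effective.

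It remains to check that $U\subset V$. Since $\supp(\gamma)\subset A$ is disjoint from $r(U)$ and $\supp(\gamma)=s(V_\gamma\setminus X)$, the bisection $V_\gamma^{-1}$ coincides with the unit space over the clopen set $r(U)$. So for $g\in U\subset V_0$ we have $r(g)\in r(U)$, and $r(g)\cdot g=g$ is the element of $V_\gamma^{-1}V_0$ with source $s(g)$. Thus $U\subset V$ and $\theta_V=\delta\in\sD(\cG)$, as required.

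\emph{Main obstacle.} The only substantive input is Step~1, the existence of the complementary bisection $W$. This is exactly where both non-fullness hypotheses are used: they make $A$ and $B$ nonempty so that cancellation applies. The final containment $U\subset V$ is a routine verification at the level of germs, using only that the correction $\gamma$ is trivial near $r(U)$.
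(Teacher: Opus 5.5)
Your argument is correct. The paper gives no proof of this lemma; it quotes it from \cite{Ma26maximal1}, so there is no in-paper argument to compare against. What you show is that the statement follows formally from the two imported facts recalled alongside it.

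Cancellation (Theorem~\ref{niceproperties}(2)) extends $U$ to a full bisection $V_0=U\sqcup W$. The factorization $\theta_{V_0}=\gamma\delta$ from Lemma~\ref{localabelianization}, with $\gamma\in\sF(\cG)_{\cG^{(0)}\setminus r(U)}$ and $\delta\in\sD(\cG)$, then replaces the complementary piece $W$ by $V_\gamma^{-1}W$ while leaving $U$ untouched. Your check that $V_\gamma^{-1}$ consists of units over $r(U)$ is exactly what is needed for this.

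One correction to your closing assessment: Step~1 is not the only substantive input. Membership in $\sD(\cG)$ comes entirely from Lemma~\ref{localabelianization}. That is where the almost finite or purely infinite hypothesis is used beyond cancellation. The nonemptiness of $\cG^{(0)}\setminus r(U)$ is needed precisely so that this lemma can be applied, not only for cancellation. Without that room the statement fails: if $r(U)=s(U)=\cG^{(0)}$, the only candidate is $V=U$, and $\theta_U$ need not lie in $\sD(\cG)$ when $\sF(\cG)\neq\sD(\cG)$.
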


\section{Maximal subgroups arising from stabilizers}\label{sec:stabilizer} 

Throughout this section, 
we let $\cG$ be an (effective) ample groupoid 
with $\cG^{(0)}$ a Cantor set. 

When $Y$ is a subset of $\cG^{(0)}$, 
the stabilizer $\St(\cG,Y)$ is defined by 
\[ 
\St(\cG,Y):=\{\alpha\in\sF(\cG)\mid\alpha(Y)=Y\}. 
\] 
For any subgroup $\Gamma\subset\sF(\cG)$, we also write 
\[ 
\St(\Gamma,Y):=\Gamma\cap\St(\cG,Y) 
=\{\alpha\in\Gamma\mid\alpha(Y)=Y\}. 
\] 
Thus $\St(\sF(\cG),Y)=\St(\cG,Y)$. 

Grigorchuk and Vorobets proved that, 
for a fully amplified group acting minimally on a Cantor set 
and having Property E, 
the stabilizer of an infinite closed nowhere dense set is maximal 
if and only if its action on that set is minimal 
\cite[Theorem 1.3]{GV26ETDS}. 
We extend this criterion to all groups between $\sD(\cG)$ and $\sF(\cG)$ 
when $\cG$ is minimal and either almost finite or purely infinite. 
We first define regular and effective reductions to closed subsets. 
Section~\ref{subsec:propertyE} establishes Property E 
for the intermediate groups considered here, 
using cancellation and Lemma~\ref{localabelianization}. 
Section~\ref{subsec:maximality} proves the maximality criterion 
and relates it to primeness of the $Y$-preserving subgroupoid. 
Section~\ref{sec:stabilizerinvariants} then studies isomorphism 
invariants of these stabilizers and uses AF realizations to construct 
pairwise nonisomorphic maximal subgroups.

\subsection{Reduction to closed subsets}\label{subsec:closedreduction} 

When $Y$ is an open subset of $\cG^{(0)}$, 
the subset $r^{-1}(Y)\cap s^{-1}(Y)$ with the relative topology 
becomes an ample groupoid in a natural way, 
which is called the reduction of $\cG$ to $Y$ in the literature. 
For a closed subset $Y\subset\cG^{(0)}$, however, 
the subgroupoid $r^{-1}(Y)\cap s^{-1}(Y)$ may not be \'etale. 
We first retain the arrows 
which locally preserve $Y$ to obtain an ample groupoid, 
and then take its effective quotient. 
For the latter step, we will impose a Hausdorffness condition. 

\begin{definition}\label{def:etalesubsets} 
A closed subset $Y\subset\cG^{(0)}$ is called $\cG$-\'etale if 
the full restriction $r^{-1}(Y)\cap s^{-1}(Y)$ is \'etale with 
the relative topology; see \cite[Section 5.3]{Ma16Adv}. 
For an arbitrary subset $Y\subset\cG^{(0)}$, we say that 
$g\in r^{-1}(Y)\cap s^{-1}(Y)$ is regular 
if there exists a bisection $U\in\cCO(\cG)$ such that 
$g\in U$ and $YU=UY$. 
Define 
\[ 
\cG|Y:= 
\left\{g\in r^{-1}(Y)\cap s^{-1}(Y)\mid\text{$g$ is regular}\right\}. 
\] 
\end{definition} 

For a bisection $U$, the equality $YU=UY$ is equivalent to 
\[ 
r(h)\in Y\iff s(h)\in Y 
\qquad\text{for every }h\in U. 
\] 
In particular, this condition is inherited by every sub-bisection of $U$. 
If $Y$ is open, then any $g\in r^{-1}(Y)\cap s^{-1}(Y)$ is regular, 
and hence $\cG|Y=r^{-1}(Y)\cap s^{-1}(Y)$. 
In particular, $\cG|Y$ is an ample groupoid with the relative topology.

\begin{lemma} 
If $Y\subset\cG^{(0)}$ is closed, then $\cG|Y$ is an ample groupoid 
with the relative topology, 
which is not necessarily effective. 
\end{lemma}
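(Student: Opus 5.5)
The plan is to check three things in turn: that $\cG|Y$ is a subgroupoid of $\cG$, that its relative topology is étale with a basis of compact open bisections, and that effectiveness can fail. The key tool throughout is the family
\[
\mathcal{B}_Y:=\{U\cap s^{-1}(Y)\mid U\in\cCO(\cG)\text{ a bisection with }YU=UY\}.
\]
By the remark after Definition~\ref{def:etalesubsets}, the condition $YU=UY$ passes to sub-bisections.

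\textbf{Subgroupoid.} Units $y\in Y$ are regular, witnessed by $U=\cG^{(0)}$. If $g$ is regular via $U$, then $g^{-1}$ is regular via $U^{-1}$, since the condition ``$r(h)\in Y\iff s(h)\in Y$'' is symmetric under inversion. If $g,h$ are composable and regular via $U,V$, then $gh\in UV$. For $k=uv\in UV$ we have $r(k)=r(u)$, $s(u)=r(v)$ and $s(k)=s(v)$, so $r(k)\in Y\iff s(u)\in Y\iff s(k)\in Y$. Thus $UV$ is a compact open bisection witnessing regularity of $gh$.

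\textbf{Topology.} For a $Y$-preserving bisection $U$, every $h\in U\cap s^{-1}(Y)$ also has $r(h)\in Y$ and is regular via $U$. Hence $U\cap s^{-1}(Y)\subset\cG|Y$, and $\cG|Y$ is the union of the sets in $\mathcal{B}_Y$. Each such set is relatively open, because it equals $U\cap\cG|Y$. Intersecting with smaller compact open bisections inside $U$ shows that $\mathcal{B}_Y$ is a basis for the relative topology of $\cG|Y$. Each $U\cap s^{-1}(Y)$ is compact, being closed in the compact set $U$ since $Y$ is closed.

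On $U\cap s^{-1}(Y)$, the map $s$ is a homeomorphism onto $s(U)\cap Y$, and $r$ is a homeomorphism onto $\theta_U(s(U)\cap Y)=r(U)\cap Y$; both images are relatively open in $Y$. Therefore $r|_{\cG|Y}$ and $s|_{\cG|Y}$ are local homeomorphisms onto the unit space $Y$. Continuity of multiplication and inversion is inherited from $\cG$. Hausdorffness, second countability and local compactness also follow: the first two from $\cG$, and local compactness because $\mathcal{B}_Y$ consists of compact open sets. Finally $Y$ is totally disconnected, so $\cG|Y$ is ample.

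\textbf{Non-effectiveness.} Here the plan is to give an example, or to point ahead to one. A natural candidate is a closed invariant-like set $Y$ on which some $\alpha\in\sF(\cG)$ with $\alpha(Y)=Y$ acts trivially while being nontrivial nearby. For instance, take $Y\subset\supp(\tau_U)^c$ nowhere dense, arranged so that some bisection $U$ preserving $Y$ has $U\cap s^{-1}(Y)$ consisting of nonunit isotropy. Concretely, choose $U$ with $U\cap s^{-1}(Y)\subset\Iso(\cG)\setminus\cG^{(0)}$, for example with $Y$ a single point fixed by $\theta_U$ with nontrivial germ. Then $U\cap s^{-1}(Y)$ is an open subset of $\Iso(\cG|Y)$ not contained in $Y$.

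\textbf{Main obstacle.} The only real subtlety is verifying that the sets in $\mathcal{B}_Y$ are open in the relative topology. This rests on the observation that an arrow of $U$ with source in $Y$ automatically lies in $\cG|Y$, so $U\cap s^{-1}(Y)=U\cap\cG|Y$.
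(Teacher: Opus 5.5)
Your proof is correct and is essentially the paper's argument. In both, the key point is that a compact open bisection $U$ with $YU=UY$ satisfies $U\cap\cG|Y=U\cap s^{-1}(Y)$, so $r|U$ restricts to a homeomorphism onto the relatively open set $r(U)\cap Y$ (you also check the subgroupoid axioms, which the paper leaves implicit).

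The paper defers non-effectiveness to Example~\ref{ex:fullrestriction}(2). Your single-point example $Y=\{y\}$ with $\cG_y\neq\{y\}$ also works, since then $\cG|Y=\cG_y$ is a nontrivial discrete group. You should drop the preceding sentence about $Y\subset\supp(\tau_U)^c$: it is inconsistent, because over such a $Y$ the bisection implementing $\tau_U$ consists only of units.
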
 

\begin{proof} 
It is easy to see that 
$\cG|Y$ is a second countable locally compact Hausdorff groupoid, 
because $\cG|Y$ is an open subset of $r^{-1}(Y)\cap s^{-1}(Y)$. 
The unit space of $\cG|Y$ is $Y$, which is totally disconnected. 
It remains to show that $\cG|Y$ is \'etale. 
For any $g\in\cG|Y$, 
there exists a bisection $U\in\cCO(\cG)$ 
such that $g\in U$ and $YU=UY$. 
For every $h\in U$, we have $r(h)\in Y$ if and only if $h\in\cG|Y$. 
The homeomorphism $r|U$ therefore restricts to a homeomorphism 
from $U\cap\cG|Y$ onto the open subset $r(U)\cap Y$ of $Y$. 
Hence $\cG|Y$ is \'etale. 
\end{proof} 

By \cite[Lemma 5.9]{Ma16Adv}, a closed subset $Y$ is $\cG$-\'etale 
if and only if every arrow with source and range in $Y$ is regular. 
Equivalently, 
\begin{equation}\label{eq:etalesubsetregularity} 
Y\text{ is }\cG\text{-\'etale} 
\iff \cG|Y=r^{-1}(Y)\cap s^{-1}(Y). 
\end{equation} 

\begin{remark}\label{rem:etalesubsets} 
For \'etale equivalence relations, \'etale subsets play an important 
role in the work of Giordano, Putnam and Skau 
\cite[Theorem 3.11]{GPS04ETDS} and of Giordano, Matui, Putnam and 
Skau \cite[Definition 4.1]{GMPS08ETDS}. 
Their notion of an $R$-\'etale subset requires the full restriction 
$R\cap(Y\times Y)$ to be \'etale with the relative topology. 
The definition for ample groupoids in \cite[Section 5.3]{Ma16Adv} 
extends this notion. 
\end{remark} 

\begin{example}\label{ex:fullrestriction} 
\begin{enumerate} 
\item It is easy to find $Y$ 
satisfying $\cG|Y\neq r^{-1}(Y)\cap s^{-1}(Y)$. 
Let $g\in\cG$ satisfy $r(g)\neq s(g)$. 
Choose a clopen neighborhood $A$ of $r(g)$ not containing $s(g)$, 
and a sequence of distinct points $x_n\in A\setminus\{r(g)\}$ 
converging to $r(g)$. 
Set $Y:=\{x_n\mid n\in\N\}\cup\{r(g),s(g)\}$. 
Then $r(g)$ is nonisolated in $Y$, whereas $s(g)$ is isolated. 
Hence $g\in r^{-1}(Y)\cap s^{-1}(Y)$ is not regular. 
\item In general, $\cG|Y$ may not be effective 
even if $\cG$ is effective. 
Indeed, suppose that 
there exists $x\in\cG^{(0)}$ such that $\cG_x\neq\{x\}$. 
Consider the effective ample groupoid $\cH:=\cG\times\cG$ and 
set $Y:=\{x\}\times\cG^{(0)}\subset\cH^{(0)}$. 
Then $\cH|Y=r^{-1}(Y)\cap s^{-1}(Y)=\cG_x\times\cG$ is 
an ample groupoid which is not effective. 
\end{enumerate} 
\end{example}

The full restriction can differ from the regular reduction even for 
nowhere dense Cantor sets on which the stabilizer acts minimally. 
Our introduction of regular reductions was motivated by 
\cite[Example 6.9]{BBQS25TAMS}; see \cite[Example 8.5]{Ma26maximal3} 
for its groupoid description. 
Example~\ref{ex:AFnonetalereduction} below gives an AF variant, 
in which passing to the regular reduction changes the orbits.

\begin{definition}\label{def:effectivequotient} 
Let $\cH$ be a Hausdorff ample groupoid, not necessarily effective. 
Following the terminology of \cite[p.~226]{Ne22GSM}, 
we call 
\[ 
\Eff(\cH):=\cH/\Iso(\cH)^\circ 
\] 
the effective quotient of $\cH$. 
Here the quotient is equipped with the quotient topology, and 
\[ 
g\sim h 
\iff s(g)=s(h)\ \text{and}\ gh^{-1}\in\Iso(\cH)^\circ. 
\] 
Equivalently, 
$\Eff(\cH)$ is the groupoid of germs of the local homeomorphisms 
of $\cH^{(0)}$ induced by open bisections of $\cH$. 
\end{definition} 

The quotient map $\pi\colon\cH\to\Eff(\cH)$ is open, 
it identifies the unit space of $\Eff(\cH)$ with $\cH^{(0)}$, 
and it maps every open bisection of $\cH$ 
homeomorphically onto an open bisection of $\Eff(\cH)$. 
Consequently $\Eff(\cH)$ is second countable, 
its range map is a local homeomorphism, 
it has a basis consisting of compact open bisections, 
and the interior of its isotropy subgroupoid is its unit space. 
Thus $\Eff(\cH)$ is an effective ample groupoid in our Hausdorff 
convention if and only if $\Iso(\cH)^\circ$ is closed in $\cH$. 
In terms of the associated pseudogroup, 
this is the condition that 
the interior of the fixed point set of each local homeomorphism 
is closed in its domain; see \cite[Proposition 3.1.22]{Ne22GSM}. 
Accordingly, for a closed subset $Y\subset\cG^{(0)}$, we impose the condition 
\begin{equation}\label{eq:hausdorffreduction} 
\Iso(\cG|Y)^\circ\ \text{is closed in }\cG|Y. 
\end{equation} 
This condition ensures that 
$\Eff(\cG|Y)$ is a Hausdorff effective ample groupoid, 
in accordance with our conventions. 

\begin{definition} 
Let $Y\subset\cG^{(0)}$. 
\begin{enumerate} 
\item When $Y$ is open, we call 
$\cG{\downarrow}Y=\cG|Y=r^{-1}(Y)\cap s^{-1}(Y)$ 
the reduction of $\cG$ to $Y$. 
\item When $Y$ is closed, we call 
$\cG|Y$ the regular reduction of $\cG$ to $Y$. 
\item When $Y$ is closed and satisfies \eqref{eq:hausdorffreduction}, 
we call $\cG{\downarrow}Y:=\Eff(\cG|Y)$ 
the effective reduction of $\cG$ to $Y$. 
\end{enumerate} 
\end{definition} 

For open $Y$, the reduction $\cG|Y$ is an open subgroupoid of the effective 
groupoid $\cG$, hence is effective; this justifies the equality in (1). 
For closed $Y$, passing to the effective quotient identifies arrows 
whose induced local homeomorphisms have the same germ on $Y$. 
Whenever an effective reduction to a closed subset is used below, 
we assume that the closed subset satisfies \eqref{eq:hausdorffreduction} 
with respect to its ambient groupoid. 
This convention imposes no additional assumption on statements 
which only concern the stabilizer action or the regular reduction. 

The Hausdorffness condition is automatic 
in two basic cases. 
First, if $\cG$ is principal, then $\cG|Y$ is principal 
for every closed $Y$, and hence $\cG{\downarrow}Y=\cG|Y$. 
This applies, in particular, 
to groupoids of minimal $\Z$ actions on Cantor sets. 
Second, the condition is automatic when $Y$ is finite. 
In this case $Y$ is discrete in the relative topology. 
Given $g\in r^{-1}(Y)\cap s^{-1}(Y)$, 
choose clopen sets $A\ni r(g)$ and $B\ni s(g)$ 
with $A\cap Y=\{r(g)\}$ and $B\cap Y=\{s(g)\}$, 
and a bisection $U\in\cCO(\cG)$ 
with $g\in U$, $r(U)\subset A$ and $s(U)\subset B$. 
For $h\in U$ we have $r(h)\in Y$ if and only if $h=g$, 
and the same holds for $s(h)$, so that $YU=UY$. 
Hence every element of $r^{-1}(Y)\cap s^{-1}(Y)$ is regular and 
$\cG|Y=r^{-1}(Y)\cap s^{-1}(Y)$. 
Its unit space $Y$ being finite and its range map being 
a local homeomorphism, $\cG|Y$ is discrete, 
so $\Iso(\cG|Y)^\circ=\Iso(\cG|Y)$ is closed and 
\eqref{eq:hausdorffreduction} holds. 
The effective reduction $\cG{\downarrow}Y$ is then 
the equivalence relation on $Y$ which identifies two points 
lying in the same $\cG$-orbit; 
in particular it is minimal if and only if 
$Y$ is contained in a single $\cG$-orbit. 
Since $Y$ is discrete, a group acts minimally on $Y$ 
if and only if it acts transitively, 
and we use the two terms interchangeably in this situation. 
Finally, a finite subset of $\cG^{(0)}$ is automatically nowhere 
dense, because $\cG^{(0)}$ has no isolated points. 

For an irreducible SFT groupoid 
(see \cite[Section 2.3]{Ma26maximal1}), 
the regular reduction to a closed set with 
no isolated points is effective, so the Hausdorffness condition 
also holds; see \cite[Lemma 5.1]{Ma26maximal3}.

We now relate these reductions to the action of the stabilizer on $Y$. 
Under the hypotheses below, every compact open full bisection of 
the regular reduction extends to an element of the stabilizer. 
To distinguish such extensions from their actions on $Y$, 
we temporarily keep track of the full bisections themselves. 
If $\cH$ is a Hausdorff ample groupoid with compact unit space, 
not necessarily effective, we write 
\[ 
\widetilde{\sF}(\cH) 
:=\{U\subset\cH\mid U\text{ is a compact open full bisection}\}. 
\] 
This is a group under bisection multiplication, 
with identity $\cH^{(0)}$ and inverse $U\mapsto U^{-1}$. 
For the empty groupoid, 
it is the trivial group consisting of the empty bisection. 
The assignment $U\mapsto\theta_U$ is 
a homomorphism into $\Homeo(\cH^{(0)})$. 
If $\cH$ is effective, it identifies $\widetilde{\sF}(\cH)$ with 
the topological full group $\sF(\cH)$ defined earlier. 
In general, different full bisections may induce the same homeomorphism. 
We use the notation $\widetilde{\sF}$ only in this subsection. 

Let $Y$ be a closed subset of $\cG^{(0)}$. 
We define a map $p:\St(\cG,Y)\to\widetilde{\sF}(\cG|Y)$ as follows. 
For $\alpha\in\St(\cG,Y)$, 
there exists a full bisection $U\in\cCO(\cG)$ 
such that $\alpha=\theta_U$. 
As $YU=UY$, any element $g\in YUY$ is regular, 
and so $YUY$ is a full bisection of $\cG|Y$. 
Set $p(\alpha):=YUY$. 

When $Y$ satisfies \eqref{eq:hausdorffreduction}, 
we define another map $q:\St(\cG,Y)\to\sF(\cG{\downarrow}Y)$ 
by letting $q(\alpha)$ be the restriction of $\alpha$ to $Y$. 

\begin{proposition}\label{structureofSt} 
Let $Y$ be a closed subset of $\cG^{(0)}$. 
Suppose that $\cG$ is minimal and either almost finite or purely infinite. 
\begin{enumerate} 
\item The map $p$ defined above is a well-defined surjective homomorphism 
from $\St(\cG,Y)$ to $\widetilde{\sF}(\cG|Y)$, and 
\[ 
\Ker p=\{\alpha\in\St(\cG,Y) 
\mid\text{$\alpha$ equals $\id$ on a neighborhood of $Y$}\}. 
\] 
\item Suppose in addition that $Y$ satisfies \eqref{eq:hausdorffreduction}. 
The map $q$ defined above is a well-defined surjective homomorphism 
from $\St(\cG,Y)$ to $\sF(\cG{\downarrow}Y)$, and 
\[ 
\Ker q=\{\alpha\in\St(\cG,Y) 
\mid\text{$\alpha$ equals $\id$ on $Y$}\}. 
\] 
\end{enumerate} 
If $Y\neq\cG^{(0)}$, the restrictions of $p$ and, when defined, $q$ 
to $\St(\sD(\cG),Y)$ are also surjective. 
\end{proposition}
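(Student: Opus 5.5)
The easy parts come first: well-definedness, the homomorphism property and the kernels. Surjectivity needs a real construction, and that is where the effort goes.

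For (1), let $\alpha=\theta_U\in\St(\cG,Y)$. Since $\alpha(Y)=Y$, every $h\in U$ satisfies $r(h)\in Y\iff s(h)\in Y$. So $YU=UY$, every arrow of $YUY$ is regular, and $YUY$ is a compact open full bisection of $\cG|Y$. Compactness holds because $YUY=U\cap s^{-1}(Y)$ is closed in $U$, and openness in $\cG|Y$ is relative.

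Multiplicativity is $YU_1Y\cdot YU_2Y=YU_1U_2Y$. This holds because $U_2$ maps $s^{-1}(Y)$ into $r^{-1}(Y)$.

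For the kernel: $p(\alpha)=Y$ means $U\cap s^{-1}(Y)\subset\cG^{(0)}$. Since $U\cap\cG^{(0)}$ is clopen in $U$, the set $s(U\setminus\cG^{(0)})=\supp(\alpha)$ is a clopen set disjoint from $Y$. Hence $\alpha=\id$ on the neighbourhood $X\setminus\supp(\alpha)$ of $Y$. The converse is immediate.

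For (2), $q=\pi\circ p$ composed with $\theta$. The restriction $\alpha|_Y$ is the homeomorphism induced by the full bisection $\pi(YUY)$ of $\Eff(\cG|Y)$, so it lies in $\sF(\cG{\downarrow}Y)$. Every compact open full bisection of $\Eff(\cG|Y)$ is the image of one of $\cG|Y$: lift locally through $\pi$, then disjointify using compactness and total disconnectedness. Hence surjectivity of $q$ follows from surjectivity of $p$. The kernel statement is clear.

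Surjectivity of $p$. Take $W\in\widetilde{\sF}(\cG|Y)$. By compactness, cover $W$ by finitely many pieces $Y V_i Y$, where the $V_i\in\cCO(\cG)$ are bisections with $YV_i=V_iY$. Refining, we may assume the sets $s(V_i)\cap Y$ are disjoint and cover $Y$, and likewise the sets $r(V_i)\cap Y$. We then shrink the $V_i$ so that the $s(V_i)$ are pairwise disjoint and the $r(V_i)$ are pairwise disjoint; this shrinking keeps $YV_i=V_iY$. Now $V:=\bigcup V_i$ is a compact open bisection with $YV=VY$, $YVY=W$, and $Y\subset s(V)\cap r(V)$.

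We want to extend $V$, off $Y$, to a full bisection. Since $Y\subset X$ is the relevant case, two situations arise.
\begin{enumerate}
\item If $s(V)=X$, then $r(V)=X$ as well, and $\theta_V$ is already the required element.
\item Otherwise, if $s(V)\ne X\ne r(V)$, Lemma~\ref{elementofDG} gives a full bisection $V'\supset V$ with $\theta_{V'}\in\sD(\cG)$. This $V'$ preserves $Y$: on $s^{-1}(Y)$ it coincides with $V$, so $V'\cap s^{-1}(Y)\subset r^{-1}(Y)$; and every $h\in V'$ with $r(h)\in Y$ lies in $V$, because $Y\subset r(V)$ and $V'$ is a bisection. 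Hence $\theta_{V'}\in\St(\sD(\cG),Y)$ and $p(\theta_{V'})=W$.
\end{enumerate}

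The main obstacle is arranging $s(V)\ne X$ (and so $r(V)\ne X$) when $Y\ne X$. We shrink each $V_i$ to a sub-bisection whose source is a clopen neighbourhood of $s(V_i)\cap Y$ missing a fixed nonempty clopen set $C\subset X\setminus Y$, with range likewise missing $C$. This is possible because $Y$ is closed, and $YV_i=V_iY$ survives shrinking, so no case split is needed when $Y\ne\cG^{(0)}$. That gives surjectivity already from $\St(\sD(\cG),Y)$, proving the final assertion. When $Y=X$, the statement is trivial: $\cG|Y=\cG$ and $p=\id$.
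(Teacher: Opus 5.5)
Your proposal is correct and follows essentially the paper's proof. You cover the full bisection by finitely many $Y$-preserving pieces, shrink them near $Y$ so that sources and ranges are pairwise disjoint and proper, extend by Lemma~\ref{elementofDG}, and check that the extension still preserves $Y$. Your separation of the disjoint compact sets $s(V_i)\cap Y$ (resp.\ $r(V_i)\cap Y$) by disjoint clopen neighbourhoods, together with removing a fixed clopen $C\subset\cG^{(0)}\setminus Y$, is a more direct substitute for the paper's limit argument with decreasing clopen sets $A_n$ satisfying $\bigcap_nA_n=Y$.

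One caveat concerns the aside in your case (1), that $s(V)=\cG^{(0)}$ forces $r(V)=\cG^{(0)}$. This is false for purely infinite $\cG$: in the Cuntz groupoid, the bisection implementing $x\mapsto 0x$ has full source, has a proper cylinder as range, and preserves $Y=\{0^\infty\}$. It is harmless, because your final argument shrinks both sources and ranges to miss $C$ and never uses that case.
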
 

\begin{proof} 
(1)\: 
The full bisection implementing $\alpha\in\St(\cG,Y)$ is unique 
because $\cG$ is effective, so $p$ is well-defined. 
If $U,W$ implement elements of $\St(\cG,Y)$, then 
$Y(UW)Y=(YUY)(YWY)$, and hence $p$ is a homomorphism. 
Suppose that $\theta_U$ is in $\Ker p$. 
Then $YUY=Y$, and so we get $Y\subset U$. 
It follows that an open neighborhood of $Y$ is contained in $U$, 
which means $\theta_U$ equals the identity on a neighborhood of $Y$. 
Conversely, 
if $\theta_U$ equals the identity on an open neighborhood $A$ of $Y$, 
then $UA=A$ by effectiveness of $\cG$, and hence $YUY=Y$. 

It remains to show that $p$ is surjective. 
The cases $Y=\emptyset$ and $Y=\cG^{(0)}$ are immediate. 
We may therefore assume that $Y$ is nonempty and proper. 
Let $V\in\cCO(\cG|Y)$ be a full bisection. 
For every $g\in V$, there exists a bisection $V_g\in\cCO(\cG)$ 
such that $g\in V_g$, $YV_g=V_gY$ and $YV_gY\subset V$. 
The last condition can be obtained by shrinking $V_g$, 
since $V$ is open in $\cG|Y$ with its relative topology. 
By compactness of $V\subset\cG$, 
we may find finitely many bisections $V_1,V_2,\dots,V_m\in\cCO(\cG)$ 
which, after taking a disjoint compact open refinement, 
are mutually disjoint, cover $V$ 
and satisfy $YV_i=V_iY$ and $YV_iY\subset V$. 
Let $(A_n)_n$ be a decreasing sequence of 
proper clopen subsets of $\cG^{(0)}$ such that $\bigcap_nA_n=Y$. 
Then, for any $n$, 
the bisections $A_nV_iA_n$, $i=1,2,\dots,m$ are mutually disjoint 
and still cover $V$. 
We claim that for sufficiently large $n$, 
\[ 
r(A_nV_iA_n),\ i=1,2,\dots,m 
\] 
are mutually disjoint. 
Otherwise, after passing to a subsequence and relabeling, 
we may fix $i\neq j$ and choose $g_n\in A_nV_iA_n$ and $h_n\in A_nV_jA_n$ 
such that $r(g_n)=r(h_n)$. 
We may assume that 
$g_n$ and $h_n$ converge to $g\in YV_iY$ and $h\in YV_jY$, respectively. 
One has $r(g)=r(h)$. 
Since $V$ is a bisection, we get $g=h$, which is a contradiction. 
In the same way, 
\[ 
s(A_nV_iA_n),\ i=1,2,\dots,m 
\] 
are mutually disjoint for sufficiently large $n$. 
For a large enough $n$, we set 
\[ 
U:=A_nV_1A_n\sqcup A_nV_2A_n\sqcup\dots\sqcup A_nV_mA_n. 
\] 
Then $U\in\cCO(\cG)$ is a bisection satisfying $YUY=V$. 
Moreover, $r(U),s(U)\subset A_n\neq\cG^{(0)}$. 
Using Lemma~\ref{elementofDG}, 
we may extend $U$ to a full bisection $W$ with $\theta_W\in\sD(\cG)$. 
Since $Y=s(V)\subset s(U)$ and $Y=r(V)\subset r(U)$, 
and $W$ is a bisection containing $U$, both $s(W\setminus U)$ 
and $r(W\setminus U)$ are disjoint from $Y$. 
Thus $\theta_W\in\St(\sD(\cG),Y)$ and $p(\theta_W)=YWY=V$, 
proving surjectivity even on $\St(\sD(\cG),Y)$. 

(2)\: 
Put $\cH=\cG|Y$ and let $\pi:\cH\to\Eff(\cH)$ be the quotient map, 
whose properties were recorded after Definition~\ref{def:effectivequotient}. 
By \eqref{eq:hausdorffreduction}, 
the groupoid $\Eff(\cH)=\cG{\downarrow}Y$ is an effective ample groupoid, 
so that $\sF(\Eff(\cH))$ is defined 
and may be identified with $\widetilde{\sF}(\Eff(\cH))$ as above. 
For $U\in\widetilde{\sF}(\cH)$, 
the image $\pi(U)$ is a compact open full bisection of $\Eff(\cH)$. 
Thus $U\mapsto\pi(U)$ is a homomorphism 
$\widetilde{\sF}(\cH)\to\sF(\Eff(\cH))$, and it is surjective. 
Indeed, let $B$ be a compact open full bisection of $\Eff(\cH)$. 
Each arrow of $B$ has a lift to a compact open bisection of $\cH$ 
whose image under $\pi$ is contained in $B$. 
Choose finitely many such bisections $U_1,\dots,U_k$ 
whose source sets cover $Y$. 
Choose a clopen partition $Y=\bigsqcup_{j=1}^k C_j$ 
with $C_j\subset s(U_j)$, allowing empty pieces. 
Then $U:=\bigsqcup_j U_jC_j$ has source $Y$. 
Its range sets are disjoint and cover $Y$ 
because their images are the corresponding range sets 
of the full bisection $B$. 
Hence $U$ is a full bisection of $\cH$ and $\pi(U)=B$. 
Composing this surjection with $p$, we obtain $q$: 
if a full bisection $W$ of $\cG$ implements $\alpha\in\St(\cG,Y)$, 
then $\theta_{\pi(YWY)}=\alpha|Y$. 
Therefore $q$ is a well-defined surjective homomorphism. 
If $Y\neq\cG^{(0)}$, (1) also gives surjectivity of $q$ on 
$\St(\sD(\cG),Y)$. 
It is evident that $\Ker q$ equals 
$\{\alpha\in\St(\cG,Y)\mid\text{$\alpha$ equals $\id$ on $Y$}\}$. 
\end{proof} 

The orbit comparison needed below is local and does not require 
almost finiteness or pure infiniteness. 

\begin{proposition}\label{stabilizerminimality} 
Let $\cG$ be a minimal ample groupoid with Cantor unit space $X$, 
let $Y\subset X$ be a nonempty closed subset, and let $\Gamma$ be a group 
with $\sD(\cG)\subset\Gamma\subset\sF(\cG)$. 
Then the action of $\St(\Gamma,Y)$ on $Y$ has the same orbits 
as the regular reduction $\cG|Y$. 
In particular, $\St(\Gamma,Y)$ acts minimally on $Y$ 
if and only if $\cG|Y$ is minimal. 
When $Y$ satisfies \eqref{eq:hausdorffreduction}, 
these conditions are also equivalent to minimality 
of $\cG{\downarrow}Y$. 
If these conditions hold and $Y$ is infinite, then $Y$ is a Cantor set. 
\end{proposition}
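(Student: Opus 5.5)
The proof will establish two inclusions of orbit equivalence relations on $Y$. The easy direction is that every $\St(\Gamma,Y)$-orbit on $Y$ lies inside a $\cG|Y$-orbit. Let $\alpha=\theta_U\in\St(\Gamma,Y)$ with $U$ a full bisection and $y\in Y$. The arrow $g\in U$ with $s(g)=y$ has $r(g)=\alpha(y)\in Y$. Since $\alpha(Y)=Y$ and $\alpha$ is a bijection, $r(h)\in Y\iff s(h)\in Y$ for every $h\in U$, so $YU=UY$ and $g$ is regular. Thus $g\in\cG|Y$. Note that this direction uses no hypothesis on $\cG$. Proposition~\ref{structureofSt} cannot be used for the converse, since it assumed almost finiteness or pure infiniteness; the converse must be done by hand with three-cycles.

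For the converse, take $g\in\cG|Y$ with $s(g)=y\neq r(g)=y'$, and choose a bisection $U\in\cCO(\cG)$ containing $g$ with $YU=UY$. Shrinking $U$ to a sub-bisection (allowed, since the condition passes to sub-bisections), I will arrange $r(U)\cap s(U)=\emptyset$ and $s(U)\cup r(U)\neq X$. Next I need a second bisection $V$ with $s(V)=s(U)$ and $r(V)$ disjoint from $s(U)\cup r(U)$ that also preserves $Y$, i.e.\ $V\cap r^{-1}(Y)=\emptyset=V\cap s^{-1}(Y)$, or more simply $YV=VY$. This is the main obstacle, since $s(V)\ni y\in Y$ forces $V$ to meet $s^{-1}(Y)$.

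I will first try to sidestep this with Lemma~\ref{elementofDG}-type local extensions, but that lemma needs the almost finite or purely infinite hypothesis. The approach I will commit to is the following. Since $Y$ is closed and $X$ is Cantor, either $Y=X$ or there is a point of $X\setminus Y$. If $Y=X$, then $\cG|Y=\cG$ and Lemma~\ref{threecycle}(3), applied in its proof form, gives $\sigma=\tau_V\tau_U\in\sD(\cG)$ with $\sigma(y)=\theta_U(y)$. If $Y\neq X$ (so that $Y$ is a proper subset, though not necessarily nowhere dense), I will use minimality to pick an arrow from $y$ into the open set $X\setminus(Y\cup s(U)\cup r(U))$; shrinking $U$ first keeps this set nonempty. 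I will then take $V$ a compact open bisection around this arrow, with $r(V)\subset X\setminus(Y\cup s(U)\cup r(U))$.

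The delicate point is that $V$ must satisfy $s(h)\in Y\Rightarrow r(h)\in Y$, which fails at $h$ with $s(h)=y$. So instead of asking $V$ to preserve $Y$, I will check directly that $\sigma=\tau_V\tau_U$ maps $Y$ to $Y$. On $s(U)\cap Y$, $\sigma$ acts as $\theta_U$ into $Y$. On $r(U)$, $\sigma=\tau_V\theta_U^{-1}$ sends $Y$-points to $s(U)\cap Y$ and then into $r(V)\not\subset Y$, which breaks invariance. To repair this, I will replace the three-cycle by the commutator of $\tau_U$ with a transposition $\tau_W$ supported away from $Y$, using a product of two disjoint conjugate transpositions as in Lemma~\ref{involution}. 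Concretely, I take $\sigma=\tau_U\tau_{W_1}\tau_{W_2}$-type elements, where $W_1$ and $W_2$ lie in $X\setminus Y$ and implement an odd-even correction, and I arrange $Z$ connecting $s(W_1)$ to $s(W_2)$ by minimality so that the product lies in $\sD(\cG)$. Such an element restricts to $\tau_U$ on a neighborhood of $Y$, which preserves $Y$ because $YU=UY$, and it moves $y$ to $y'$. Hence $\sigma\in\St(\sD(\cG),Y)\subset\St(\Gamma,Y)$, and the orbits coincide.

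Three consequences remain. Minimality of the action is equivalent to minimality of $\cG|Y$, since the orbits are equal. For \eqref{eq:hausdorffreduction}, $\Eff$ does not change orbits, so $\cG{\downarrow}Y$ has the same orbits and the same equivalence holds. Finally, if $Y$ is infinite and the action is minimal, then $Y$ is compact, metrizable and totally disconnected. An isolated point of $Y$ would have an orbit consisting of isolated points, by homeomorphism invariance, and that orbit would be dense, hence equal to the whole of $Y$; then $Y$ would be an infinite compact discrete space, which is impossible. So $Y$ has no isolated points and is a Cantor set.
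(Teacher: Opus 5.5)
Your easy inclusion, the case $Y=X$, the transfer to $\cG{\downarrow}Y$ and the Cantor-set argument are fine. For the last, an isolated point $p$ lies in every orbit, since every orbit is dense and $\{p\}$ is open in $Y$.

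The gap is in the case $Y\neq X$, which is the substance of the proposition: the element you commit to is not shown to lie in $\Gamma$. With $Z$ connecting $s(W_1)$ to $s(W_2)$, Lemma~\ref{involution} shows that $\tau_{W_1}\tau_{W_2}$ already belongs to $\sD(\cG)$. Hence $\sigma=\tau_U\tau_{W_1}\tau_{W_2}$ lies in $\sD(\cG)$ if and only if $\tau_U$ does. A transposition need not lie in $\sD(\cG)$, so for $\Gamma=\sD(\cG)$ your $\sigma$ may fall outside $\St(\Gamma,Y)$. Modulo $\sD(\cG)$ your correction is trivial, whereas it has to cancel the class of $\tau_U$. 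Your earlier description, a commutator of $\tau_U$ with a transposition supported away from $Y$, does not help either: with disjoint supports that commutator is the identity.

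The missing idea is a single correcting transposition that is conjugate to $\tau_U$ itself.
\begin{itemize}
\item By minimality, choose arrows from $y$ to two distinct points $z_1,z_2\in X\setminus Y$, and compact open bisections $W_1,W_2$ containing them.
\item After shrinking (including $U$), arrange that $s(W_1)=s(W_2)=s(U)$, that $s(U)$, $r(U)$, $r(W_1)$, $r(W_2)$ are mutually disjoint, and that $r(W_1)\cup r(W_2)$ misses $Y$.
\item Put $V:=W_2W_1^{-1}$. Apply Lemma~\ref{involution} to $\tau_U$ and $\tau_V$, with connecting bisection $W_1$ from $s(U)$ to $s(V)=r(W_1)$. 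This gives $\alpha:=\tau_U\tau_V\in\sD(\cG)\subset\Gamma$.
\item Since $YU=UY$, $\tau_U$ preserves $Y$, while $\tau_V$ fixes $Y$ pointwise. Hence $\alpha\in\St(\Gamma,Y)$ and $\alpha(y)=\tau_U(y)=y'$.
\end{itemize}
The bisection you built for the abandoned three-cycle, from $s(U)$ into $X\setminus(Y\cup s(U)\cup r(U))$, is exactly the kind of connecting bisection needed. It should serve as $W_1$, rather than as a factor of $\sigma$. This is the paper's argument.
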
 

\begin{proof} 
Every element of $\St(\Gamma,Y)$ restricts to a homeomorphism 
of $Y$ induced by a full bisection of $\cG|Y$. 
Thus each $\St(\Gamma,Y)$-orbit is contained in a $\cG|Y$-orbit. 
If $Y=X$, the reverse inclusion follows from 
Lemma~\ref{threecycle} (1)--(2), by completing each arrow 
with distinct source and range to a three-cycle. 
Suppose that $Y$ is proper, and let $g\in\cG|Y$ 
with $x:=s(g)\neq r(g)$. 
Choose a compact open bisection $U$ containing $g$ with $YU=UY$. 
By minimality of $\cG$, the orbit of $x$ contains distinct points 
$z_1,z_2\in X\setminus Y$. 
Choose compact open bisections $W_1,W_2$ with 
$x\in s(W_j)$ and $\theta_{W_j}(x)=z_j$. 
Shrinking these bisections and $U$, we may assume that 
\[ 
s(W_1)=s(W_2)=s(U), 
\] 
that $s(U)$, $r(U)$, $r(W_1)$ and $r(W_2)$ are mutually disjoint, 
and that $r(W_1)\cup r(W_2)$ is disjoint from $Y$. 
Put $V:=W_2W_1^{-1}$. 
Lemma~\ref{involution}, with $Z=W_1$, gives 
$\alpha:=\tau_U\tau_V\in\sD(\cG)$. 
The transposition $\tau_U$ preserves $Y$, and $\tau_V$ fixes $Y$ 
pointwise because its support is disjoint from $Y$. 
Hence $\alpha\in\St(\Gamma,Y)$ and $\alpha(x)=r(g)$. 
This proves the orbit equality. 

The quotient map from $\cG|Y$ to its effective quotient 
does not change orbits, so the minimality assertions follow. 
Finally, if $Y$ is infinite and the stabilizer action is minimal, 
the nonisolated points of $Y$ form a nonempty closed invariant subset. 
They therefore exhaust $Y$, which is consequently a Cantor set. 
\end{proof}

\subsection{Local subgroups and Property E}\label{subsec:propertyE} 

Throughout this subsection, 
we let $\cG$ be a minimal ample groupoid with Cantor unit space 
and assume that $\cG$ is either almost finite or purely infinite. 

We use the local subgroup notation of Definition~\ref{def:localsubgroup}. 

\begin{definition}\label{def:propertyE} 
Let $\Gamma$ be a subgroup of $\sF(\cG)$. 
Following \cite[Section 5]{GV26ETDS}, 
we say that $\Gamma$ has Property E if 
$\Gamma_{V_1\cup V_2}=\left\langle\Gamma_{V_1}\cup\Gamma_{V_2}\right\rangle$ 
holds whenever $V_1,V_2\subset\cG^{(0)}$ are clopen sets 
satisfying $V_1\cap V_2\neq\emptyset$. 
\end{definition} 

In this subsection and the next, 
we work with the following class of groups. 

\begin{setting}\label{intermediategroup} 
Let $\Gamma$ be a group with $\sD(\cG)\subset\Gamma\subset\sF(\cG)$. 
Since the quotient $\sF(\cG)/\sD(\cG)$ is abelian, 
every such $\Gamma$ is a normal subgroup of $\sF(\cG)$. 
Conversely, by Theorem~\ref{niceproperties} (1), 
every non-trivial normal subgroup of $\sF(\cG)$ contains $\sD(\cG)$. 
Thus the groups considered here are precisely 
the non-trivial normal subgroups of $\sF(\cG)$. 
The choices $\Gamma=\sF(\cG)$ and $\Gamma=\sD(\cG)$ 
are the two extreme cases. 
Another case of interest is $\Gamma=\Ker I$, 
where $I:\sF(\cG)\to H_1(\cG)$ is the index map. 
\end{setting} 

We now recall why $\sF(\cG)$ itself has Property E. 
The group $\sF(\cG)$ is fully amplified in the sense of 
\cite[Definition 2.5]{GV26ETDS}, that is, it coincides with its own 
full amplification, and it acts minimally on $\cG^{(0)}$. 
The cancellation property in Theorem~\ref{niceproperties} (2) implies 
that $\sF(\cG)$ has Property HS of \cite[Section 5]{GV26ETDS}. 
Indeed, let $A,B\subset\cG^{(0)}$ be nonempty proper clopen sets. 
Suppose that $1_A$ and $1_B$ represent the same class in the 
coinvariant group $C(\cG^{(0)},\Z)_{\sF(\cG)}$ for the natural action 
on $C(\cG^{(0)},\Z)$. 
Relations induced by elements of $\sF(\cG)$ are groupoid boundaries, 
so $[1_A]=[1_B]$ in $H_0(\cG)$. 
The same equality holds for their complements. 
Applying cancellation to $A,B$ and to 
$\cG^{(0)}\setminus A,\cG^{(0)}\setminus B$, 
and taking the union of the resulting bisections, 
we obtain $\alpha\in\sF(\cG)$ with $\alpha(A)=B$. 
This is precisely Property HS. 
By \cite[Proposition 8.6]{GV26ETDS}, $\sF(\cG)$ therefore has Property E. 

To pass from $\sF(\cG)$ to $\Gamma$, we use 
Lemma~\ref{localabelianization} to represent every abelianization class 
inside any nonempty clopen set. 
This lets us adjust a factorization in $\sF(\cG)$ to one in $\Gamma$, 
using corrections supported in the overlap of the two clopen sets. 

\begin{proposition}\label{propertyE} 
Let $\Gamma$ be as in Setting~\ref{intermediategroup}. 
Then $\Gamma$ has Property E. 
\end{proposition}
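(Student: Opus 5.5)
We need $\Gamma_{V_1\cup V_2}=\langle\Gamma_{V_1}\cup\Gamma_{V_2}\rangle$ for clopen $V_1,V_2$ with $V_1\cap V_2\neq\emptyset$. The inclusion $\supseteq$ is trivial, since elements supported in $V_1$ or in $V_2$ are supported in $V_1\cup V_2$. For the reverse, take $\alpha\in\Gamma_{V_1\cup V_2}$. Since $\sF(\cG)$ has Property E (as recalled above, via Property HS and \cite[Proposition 8.6]{GV26ETDS}), we can write
\[
\alpha=\beta_1\beta_2\cdots\beta_k
\]
with each $\beta_j\in\sF(\cG)_{V_1}\cup\sF(\cG)_{V_2}$. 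The plan is to correct each factor into $\Gamma$ by multiplying with elements supported in the nonempty clopen overlap $W:=V_1\cap V_2$, which lie in both local subgroups.

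For the correction, apply Lemma~\ref{localabelianization} to the nonempty clopen set $W$: for each $j$ there is $\gamma_j\in\sF(\cG)_W$ with $\beta_j\gamma_j\in\sD(\cG)\subset\Gamma$. Then $\beta_j':=\beta_j\gamma_j$ lies in $\Gamma$ and is supported in the same $V_i$ as $\beta_j$, because $W\subset V_i$. Hence $\beta_j'\in\Gamma_{V_1}\cup\Gamma_{V_2}$.

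We then rewrite
\[
\alpha=\beta_1'\gamma_1^{-1}\beta_2'\gamma_2^{-1}\cdots\beta_k'\gamma_k^{-1}
\]
and move the $\gamma_j^{-1}$ to the right. Each $\gamma_j^{-1}$ can be pushed past the remaining factors by conjugation: $\gamma^{-1}\beta'=(\gamma^{-1}\beta'\gamma)\gamma^{-1}$. Conjugation by $\gamma\in\sF(\cG)_W$ preserves $\Gamma$, since $\Gamma$ is normal in $\sF(\cG)$ by Setting~\ref{intermediategroup}. It also maps $\Gamma_{V_i}$ to $\Gamma_{\gamma^{-1}(V_i)}=\Gamma_{V_i}$, because $\gamma$ is supported in $W\subset V_i$ and therefore leaves $V_i$ invariant.

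After this rearrangement, $\alpha=\delta\cdot\gamma$, where $\delta\in\langle\Gamma_{V_1}\cup\Gamma_{V_2}\rangle$ and $\gamma:=\gamma_1^{-1}\cdots\gamma_k^{-1}$ up to conjugation shuffles, so in particular $\gamma\in\sF(\cG)_W$. Since $\alpha,\delta\in\Gamma$, we get $\gamma=\delta^{-1}\alpha\in\Gamma$. As $\gamma$ is supported in $W\subset V_1$, we conclude $\gamma\in\Gamma_{V_1}$, and therefore $\alpha\in\langle\Gamma_{V_1}\cup\Gamma_{V_2}\rangle$.

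The main obstacle is making sure the conjugation bookkeeping preserves both membership in $\Gamma$ and the support conditions. The two facts that make it work are normality of $\Gamma$ in $\sF(\cG)$ and the invariance $\gamma(V_i)=V_i$ for $\gamma$ supported in $W$. The substantive input is Lemma~\ref{localabelianization}, which supplies the corrections inside the overlap; this is exactly why the hypothesis $V_1\cap V_2\neq\emptyset$ is needed.
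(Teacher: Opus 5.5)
Your proof is correct and follows essentially the same route as the paper: you factor $\alpha$ in $\sF(\cG)$ using Property E, correct each factor by an element of $\sF(\cG)_W$ supplied by Lemma~\ref{localabelianization}, push the corrections to the right by conjugations preserving each $\Gamma_{V_i}$, and note that the leftover term lies in $\Gamma_W\subset\Gamma_{V_1}$. The differences are cosmetic. The paper tracks the corrections through the quotient map $\sF(\cG)\to\sF(\cG)/\Gamma$, whereas you get membership of the leftover term directly from $\gamma=\delta^{-1}\alpha\in\Gamma$; also, that term is exactly $\gamma_1^{-1}\cdots\gamma_k^{-1}$, so no conjugation shuffles are needed.
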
 

\begin{proof} 
Let $V_1,V_2\subset\cG^{(0)}$ be clopen sets 
with $W:=V_1\cap V_2\neq\emptyset$ and 
let $\alpha\in\Gamma_{V_1\cup V_2}$. 
We must write $\alpha$ as a product of elements of 
$\Gamma_{V_1}\cup\Gamma_{V_2}$. 

Let $\pi_\Gamma:\sF(\cG)\to\sF(\cG)/\Gamma$ be the quotient map, 
which is well defined because $\Gamma$ is normal in $\sF(\cG)$, 
and write the abelian group $\sF(\cG)/\Gamma$ additively. 
As $\pi_\Gamma$ factors through $\pi:\sF(\cG)\to H_1(\sF(\cG))$, 
Lemma~\ref{localabelianization} gives 
$\pi_\Gamma(\sF(\cG)_W)=\sF(\cG)/\Gamma$. 

Since $\sF(\cG)$ has Property E, we may write 
$\alpha=\beta_1\beta_2\cdots\beta_n$ 
with $\beta_i\in\sF(\cG)_{V_{j(i)}}$ for some $j(i)\in\{1,2\}$. 
For each $i$, choose $\delta_i\in\sF(\cG)_W$ with 
$\pi_\Gamma(\delta_i)=\pi_\Gamma(\beta_i)$ and 
put $\gamma_i:=\beta_i\delta_i^{-1}$. 
Then $\pi_\Gamma(\gamma_i)=0$, that is, $\gamma_i\in\Gamma$, and 
\[ 
\supp(\gamma_i)\subset\supp(\beta_i)\cup\supp(\delta_i) 
\subset V_{j(i)}\cup W=V_{j(i)}, 
\] 
so that $\gamma_i\in\Gamma_{V_{j(i)}}$. 

Put $d_0:=1$ and $d_i:=\delta_1\delta_2\cdots\delta_i$ for $i\geq1$. 
From $\beta_i=\gamma_i\delta_i$ we obtain 
\[ 
\alpha=\gamma_1\cdot(d_1\gamma_2d_1^{-1})\cdot(d_2\gamma_3d_2^{-1}) 
\cdots(d_{n-1}\gamma_nd_{n-1}^{-1})\cdot d_n. 
\] 
For each $i$ we have $\supp(d_{i-1})\subset W\subset V_{j(i)}$, 
whence $d_{i-1}(V_{j(i)})=V_{j(i)}$ and 
\[ 
\supp(d_{i-1}\gamma_id_{i-1}^{-1})=d_{i-1}(\supp(\gamma_i)) 
\subset V_{j(i)}. 
\] 
As $\Gamma$ is normal in $\sF(\cG)$, 
each $d_{i-1}\gamma_id_{i-1}^{-1}$ lies in $\Gamma_{V_{j(i)}}$. 
Finally, 
\[ 
\pi_\Gamma(d_n)=\sum_{i=1}^n\pi_\Gamma(\delta_i) 
=\sum_{i=1}^n\pi_\Gamma(\beta_i)=\pi_\Gamma(\alpha)=0, 
\] 
so that $d_n\in\Gamma_W\subset\Gamma_{V_1}$. 
This completes the proof. 
\end{proof}

\subsection{Maximality of stabilizers}\label{subsec:maximality} 

Throughout this subsection, 
we let $\cG$ be a minimal ample groupoid with Cantor unit space 
and assume that $\cG$ is either almost finite or purely infinite. 
We let $\Gamma$ be as in Setting~\ref{intermediategroup}. 

Using Property E, we first characterize maximality by the action 
of the stabilizer on $Y$. 
We then express this criterion in terms of the effective reduction. 

\begin{theorem}\label{stabilizeractionmaximal} 
Let $\cG$ be a minimal ample groupoid with Cantor unit space, 
and suppose that $\cG$ is either almost finite or purely infinite. 
Let $\Gamma$ be as in Setting~\ref{intermediategroup}. 
Let $Y\subset\cG^{(0)}$ be a nonempty closed set 
that is nowhere dense in $\cG^{(0)}$. 
Then $\St(\Gamma,Y)$ is a maximal subgroup of $\Gamma$ 
if and only if it acts minimally on $Y$. 
\end{theorem}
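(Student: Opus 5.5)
The natural route is to reduce to the Grigorchuk--Vorobets argument, which uses only minimality of the group action, Property E, and the richness of local subgroups. Property E for $\Gamma$ is available from Proposition~\ref{propertyE}. For the other ingredients we replace full amplification by the local constructions of Lemmas~\ref{threecycle} and~\ref{involution}, which produce elements of $\sD(\cG)\subset\Gamma$. Throughout we write $X=\cG^{(0)}$. We treat the two directions separately, and within each we split according to whether $Y$ is finite or infinite.

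\emph{Maximal implies minimal.} Suppose $\St(\Gamma,Y)$ does not act minimally on $Y$. Then there is a nonempty proper closed subset $Z\subsetneq Y$ invariant under $\St(\Gamma,Y)$; take the closure of an orbit, or a single orbit when $Y$ is finite. We claim that $H:=\St(\Gamma,Z)\cap\St(\Gamma,Y)\cdot\St(\Gamma,Z)$ is not the right object, and instead consider $H:=\langle\St(\Gamma,Y),\alpha\rangle$ for a suitable $\alpha$. More concretely, we show that $\St(\Gamma,Y)$ is properly contained in $\St(\Gamma,Y)\St(\Gamma,Z)$, and that the latter is a proper subgroup of $\Gamma$.

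A cleaner choice is to use the subgroup of elements $\alpha\in\Gamma$ with $\alpha(Z)\subset Y$ and $\alpha^{-1}(Z)\subset Y$ in the Grigorchuk--Vorobets form. Following \cite{GV26ETDS}, we take
\[
H:=\{\alpha\in\Gamma\mid \alpha(Y\setminus Z)\cap Y\text{ is relatively clopen-compatible}\}.
\]
Since we are unsure of the exact form, we would fall back on the simpler candidate $\St(\Gamma,Z)$. It is proper in $\Gamma$. It is not contained in $\St(\Gamma,Y)$: choose a three-cycle in $\sD(\cG)$, supported in a small clopen set meeting $Y\setminus Z$ and avoiding $Z$, that moves a point of $Y\setminus Z$ off $Y$; this uses nowhere density and Lemma~\ref{threecycle}. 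Conversely, $\St(\Gamma,Y)\subset\St(\Gamma,Z)$ by invariance of $Z$. Hence $\St(\Gamma,Y)\subsetneq\St(\Gamma,Z)\subsetneq\Gamma$, where the last inclusion is proper by minimality of $\Gamma$ on $X$ (Lemma~\ref{threecycle}(3)). So $\St(\Gamma,Y)$ is not maximal.

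\emph{Minimal implies maximal.} Let $\beta\in\Gamma\setminus\St(\Gamma,Y)$ and put $H=\langle\St(\Gamma,Y),\beta\rangle$; we must show $H=\Gamma$. The plan has three steps.

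First, find a point $y\in Y$ with $\beta(y)\notin Y$, replacing $\beta$ by $\beta^{-1}$ if necessary. Taking a small clopen neighbourhood $A$ of $y$ with $\beta(A)\cap Y=\emptyset$, we get $\beta\St(\Gamma,Y)\beta^{-1}\supset\Gamma_{\beta(A)}$ restricted appropriately. More precisely, $H$ contains $\beta\,\Gamma_{C}\,\beta^{-1}=\Gamma_{\beta(C)}$ for any clopen $C$ disjoint from $Y$, using normality of $\Gamma$ in $\sF(\cG)$. Choosing $C$ to contain $\beta^{-1}(A')$ for a clopen $A'\ni\beta(y)$, the key gain is that $H$ contains the full local subgroup $\Gamma_{D}$ for some clopen $D$ meeting $Y$. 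To arrange this, take a clopen $D\ni y$ with $\beta(D)\cap Y=\emptyset$; then $\Gamma_{\beta(D)}\subset\St(\Gamma,Y)$, so $\Gamma_D=\beta^{-1}\Gamma_{\beta(D)}\beta\subset H$.

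Second, spread $D$ over all of $Y$. Minimality of $\St(\Gamma,Y)$ on $Y$ together with compactness gives $\gamma_1,\dots,\gamma_k\in\St(\Gamma,Y)$ with $Y\subset\bigcup_i\gamma_i(D)$. Then $H\supset\Gamma_{\gamma_i(D)}$ for each $i$. Enlarge the picture with clopen sets $E_j\subset X\setminus Y$ covering the complement of a small neighbourhood of $Y$; we have $\Gamma_{E_j}\subset\St(\Gamma,Y)\subset H$. Arrange the resulting clopen cover so that it is connected by nonempty overlaps: the $\gamma_i(D)$ can be enlarged by unions with pieces of the $E_j$, and a single $E$ can be made to meet every $\gamma_i(D)$ by nowhere density of $Y$. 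Iterating Property E (Proposition~\ref{propertyE}) along this chain of overlapping clopen sets yields $\Gamma_X\subset H$, that is, $H=\Gamma$. For finite $Y$ the same argument works, with minimality meaning transitivity.

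The main obstacle is the bookkeeping in the second step. Property E requires each pairwise union to be handled with a nonempty intersection, so the cover must be reorganised into a chain $V_1,V_1\cup V_2,\dots$ in which each new set meets the union so far. We would first make one large clopen set $E\subset X\setminus Y$, with $\Gamma_E\subset H$, that meets every $\gamma_i(D)$ and whose complement lies inside $\bigcup_i\gamma_i(D)$; then we add the $\gamma_i(D)$ one at a time.
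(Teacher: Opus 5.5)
Once the abandoned candidate subgroups in your first direction are deleted, the argument is correct and is essentially the paper's proof. The converse goes through $\St(\Gamma,Y)\subsetneq\St(\Gamma,Z)\subsetneq\Gamma$, using a local element of $\sD(\cG)$ that fixes $Z$ and pushes a point of $Y\setminus Z$ off $Y$. The forward direction uses $\Gamma_D=\beta^{-1}\Gamma_{\beta(D)}\beta\subset H$, translates $\gamma_i(D)$ covering $Y$, and Property~E along a chain of overlapping clopen sets. Your choice $E=(\cG^{(0)}\setminus\bigcup_i\gamma_i(D))\cup\bigcup_iD_i$ with nonempty clopen $D_i\subset\gamma_i(D)\setminus Y$, followed by adding the $\gamma_i(D)$ one at a time, is only a reordering of the paper's sets $N$ and $E$. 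You should also state explicitly that $\St(\Gamma,Y)\neq\Gamma$, which follows from minimality of $\sD(\cG)$ on $\cG^{(0)}$ by Lemma~\ref{threecycle}~(3).
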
 

\begin{proof} 
For a clopen subset $V\subset\cG^{(0)}$ we have 
\begin{equation}\label{eq:localstab} 
V\cap Y=\emptyset\implies\Gamma_V\subset\St(\Gamma,Y), 
\end{equation} 
because every $\alpha\in\Gamma_V$ is the identity on $Y$. 
We also note that $\sD(\cG)\subset\Gamma$ acts minimally on $\cG^{(0)}$ 
by Lemma~\ref{threecycle} (3); 
in particular, no nonempty proper closed subset of $\cG^{(0)}$ 
is invariant under $\Gamma$. 

Suppose first that $\St(\Gamma,Y)$ acts minimally on $Y$. 
Since $Y$ is a nonempty proper closed subset of $\cG^{(0)}$, 
it is not invariant under $\Gamma$, and so 
$\St(\Gamma,Y)$ is a proper subgroup of $\Gamma$. 
Let $K$ be a subgroup of $\Gamma$ 
with $\St(\Gamma,Y)\subsetneq K\subset\Gamma$ and put 
\[ 
L:=\{V\subset\cG^{(0)}\mid 
\text{$V$ is clopen and }\Gamma_V\subset K\}. 
\] 
The family $L$ has the following three properties. 
\begin{itemize} 
\item If $V\in L$ and $V'\subset V$ is clopen, then $V'\in L$, 
because $\Gamma_{V'}\subset\Gamma_V$. 
\item If $V_1,V_2\in L$ satisfy $V_1\cap V_2\neq\emptyset$, 
then $V_1\cup V_2\in L$ by Proposition~\ref{propertyE}. 
\item If $V\in L$ and $\alpha\in K$, then $\alpha(V)\in L$, 
because $\Gamma_{\alpha(V)}=\alpha\Gamma_V\alpha^{-1}\subset K$. 
\end{itemize} 
By \eqref{eq:localstab}, 
every clopen set disjoint from $Y$ belongs to $L$. 

Choose $\gamma\in K\setminus\St(\Gamma,Y)$. 
Then $\gamma(Y)\neq Y$, and after replacing $\gamma$ by $\gamma^{-1}$ 
if necessary, we may assume that 
there exists $y_0\in Y$ with $\gamma(y_0)\notin Y$. 
By continuity there is a clopen neighborhood $A$ of $y_0$ 
such that $\gamma(A)\cap Y=\emptyset$. 
Then $\gamma(A)\in L$, and applying the third property 
to $\gamma^{-1}\in K$ we obtain $A\in L$. 
Let $x\in Y$. 
Since $A\cap Y$ is a nonempty relatively open subset of $Y$ and 
$\St(\Gamma,Y)$ acts minimally on $Y$, 
there exists $\alpha\in\St(\Gamma,Y)\subset K$ with $\alpha(x)\in A$. 
Hence $\alpha^{-1}(A)$ is a clopen neighborhood of $x$ 
belonging to $L$. 
As $Y$ is compact, we obtain $W_1,W_2,\dots,W_p\in L$ 
such that $Y\subset W_1\cup W_2\cup\dots\cup W_p$. 

Since $Y$ is nowhere dense, 
$W_i\setminus Y$ is a nonempty open set for each $i$, 
and so we may choose a nonempty clopen set $D_i\subset W_i\setminus Y$. 
Put $D:=D_1\cup D_2\cup\dots\cup D_p$. 
Then $D\cap Y=\emptyset$ and hence $D\in L$. 
Since 
\[ 
\left(D\cup W_1\cup\dots\cup W_{i-1}\right)\cap W_i\supset D_i 
\neq\emptyset 
\] 
for every $i$, repeated use of the second property yields 
\[ 
N:=D\cup W_1\cup W_2\cup\dots\cup W_p\in L. 
\] 
Note that $Y\subset N$. 
Put $E:=D_1\cup(\cG^{(0)}\setminus N)$. 
Then $E$ is clopen and disjoint from $Y$, so that $E\in L$, 
and $N\cap E\supset D_1\neq\emptyset$. 
The second property therefore gives 
$\cG^{(0)}=N\cup E\in L$, that is, $\Gamma=\Gamma_{\cG^{(0)}}\subset K$. 
Thus $K=\Gamma$, and $\St(\Gamma,Y)$ is a maximal subgroup of $\Gamma$. 

Suppose next that $\St(\Gamma,Y)$ does not act minimally on $Y$. 
Then there exists a nonempty closed subset $Z\subsetneq Y$ 
which is invariant under $\St(\Gamma,Y)$. 
Every element of $\St(\Gamma,Y)$ preserves $Z$, whence 
\[ 
\St(\Gamma,Y)\subset\St(\Gamma,Z)\subset\Gamma. 
\] 
Since $Z$ is a nonempty proper closed subset of $\cG^{(0)}$, 
it is not invariant under $\Gamma$, 
and therefore $\St(\Gamma,Z)\neq\Gamma$. 
It remains to see that $\St(\Gamma,Y)\neq\St(\Gamma,Z)$. 
Choose $y\in Y\setminus Z$ and 
a clopen neighborhood $A$ of $y$ with $A\cap Z=\emptyset$. 
As $Y$ is nowhere dense, 
we may choose a nonempty clopen set $B\subset A\setminus Y$. 
By Lemma~\ref{threecycle} (3), 
the local subgroup $\sD(\cG)_A$ acts minimally on $A$, 
so there exists $\sigma\in\sD(\cG)_A\subset\Gamma$ with $\sigma(y)\in B$. 
Since $\supp(\sigma)\subset A$ and $A\cap Z=\emptyset$, 
the element $\sigma$ is the identity on $Z$, 
and hence $\sigma\in\St(\Gamma,Z)$. 
On the other hand $y\in Y$ and $\sigma(y)\in B\subset\cG^{(0)}\setminus Y$, 
so $\sigma(Y)\neq Y$ and $\sigma\notin\St(\Gamma,Y)$. 
Consequently $\St(\Gamma,Y)$ is not a maximal subgroup of $\Gamma$. 
\end{proof}

We can now state the groupoid form of the maximality criterion. 

\begin{theorem}\label{stabilizermaximal} 
Let $\cG$, $\Gamma$ and $Y$ be as in Theorem~\ref{stabilizeractionmaximal}, 
and suppose in addition that $Y$ satisfies \eqref{eq:hausdorffreduction}. 
Then $\St(\Gamma,Y)$ is a maximal subgroup of $\Gamma$ 
if and only if the ample groupoid $\cG{\downarrow}Y$ is minimal. 
\end{theorem}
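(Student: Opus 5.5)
This is a direct combination of two results already established, so the plan is to chain them. By Theorem~\ref{stabilizeractionmaximal}, under the standing hypotheses on $\cG$ (minimal, Cantor unit space, almost finite or purely infinite), on $\Gamma$ (as in Setting~\ref{intermediategroup}) and on $Y$ (nonempty, closed, nowhere dense), $\St(\Gamma,Y)$ is maximal in $\Gamma$ if and only if $\St(\Gamma,Y)$ acts minimally on $Y$. It therefore suffices to show that, when $Y$ satisfies \eqref{eq:hausdorffreduction}, minimality of the stabilizer action on $Y$ is equivalent to minimality of $\cG{\downarrow}Y$.

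For this second equivalence I will invoke Proposition~\ref{stabilizerminimality}. It requires only that $\cG$ be minimal with Cantor unit space, $Y$ nonempty closed, and $\sD(\cG)\subset\Gamma\subset\sF(\cG)$, all of which hold here. It gives that the orbits of $\St(\Gamma,Y)$ on $Y$ coincide with the $\cG|Y$-orbits. Under \eqref{eq:hausdorffreduction}, the quotient map $\pi\colon\cG|Y\to\Eff(\cG|Y)=\cG{\downarrow}Y$ is the identity on units and surjective on arrows. Hence it preserves orbits exactly: two points of $Y$ are joined by an arrow of $\cG|Y$ if and only if they are joined by its image. Consequently, minimality of $\St(\Gamma,Y)$ on $Y$, minimality of $\cG|Y$, and minimality of $\cG{\downarrow}Y$ are all equivalent, and combining this with the first step yields the theorem.

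There is no genuine obstacle, since the substantive work lies in Proposition~\ref{propertyE} and Theorem~\ref{stabilizeractionmaximal}. The only point needing care is that the effective reduction is a legitimate Hausdorff ample groupoid, so that "minimal" has its usual meaning; this is exactly what \eqref{eq:hausdorffreduction} guarantees, as noted after Definition~\ref{def:effectivequotient}.
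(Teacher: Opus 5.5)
Your proposal is correct and follows the paper's proof exactly: Proposition~\ref{stabilizerminimality} identifies minimality of the $\St(\Gamma,Y)$-action on $Y$ with minimality of $\cG{\downarrow}Y$, because the quotient map does not change orbits, and Theorem~\ref{stabilizeractionmaximal} converts that minimality into maximality. Your spelled-out justification that $\pi\colon\cG|Y\to\cG{\downarrow}Y$ preserves orbits is the same remark the paper makes inside the proof of Proposition~\ref{stabilizerminimality}.
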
 

\begin{proof} 
By Proposition~\ref{stabilizerminimality}, $\St(\Gamma,Y)$ acts 
minimally on $Y$ if and only if $\cG{\downarrow}Y$ is minimal. 
The conclusion therefore follows from 
Theorem~\ref{stabilizeractionmaximal}. 
\end{proof} 

\begin{corollary}\label{stabilizermaximalFD} 
Let $\cG$ and $Y$ be as in Theorem~\ref{stabilizermaximal}. 
The following conditions are equivalent. 
\begin{enumerate} 
\item The ample groupoid $\cG{\downarrow}Y$ is minimal. 
\item $\St(\cG,Y)$ is a maximal subgroup of $\sF(\cG)$. 
\item $\St(\cG,Y)\cap\sD(\cG)$ is a maximal subgroup of $\sD(\cG)$. 
\item $\St(\cG,Y)\cap\Ker I$ is a maximal subgroup of $\Ker I$, 
where $I:\sF(\cG)\to H_1(\cG)$ is the index map. 
\end{enumerate} 
\end{corollary}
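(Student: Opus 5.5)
This corollary should follow by specializing Theorem~\ref{stabilizermaximal} to three particular choices of $\Gamma$ in Setting~\ref{intermediategroup}. The plan is to check that each choice is legitimate and that the stabilizer in each case is the group named in the statement. The key identity is
\[
\St(\Gamma,Y)=\Gamma\cap\St(\cG,Y),
\]
which holds by definition for every subgroup $\Gamma\subset\sF(\cG)$.

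First take $\Gamma=\sF(\cG)$. Then $\St(\Gamma,Y)=\St(\cG,Y)$, and Theorem~\ref{stabilizermaximal} gives (1)$\iff$(2).

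Next take $\Gamma=\sD(\cG)$, which trivially satisfies $\sD(\cG)\subset\Gamma\subset\sF(\cG)$. Here $\St(\Gamma,Y)=\St(\cG,Y)\cap\sD(\cG)$, so the same theorem gives (1)$\iff$(3).

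Finally take $\Gamma=\Ker I$. We need $\sD(\cG)\subset\Ker I$. This holds because $I$ is a homomorphism into the abelian group $H_1(\cG)$, so it kills commutators; this was already recorded in Section~\ref{subsec:homology}. Thus $\Ker I$ lies in Setting~\ref{intermediategroup}, and $\St(\Ker I,Y)=\St(\cG,Y)\cap\Ker I$. Theorem~\ref{stabilizermaximal} then gives (1)$\iff$(4).

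The standing hypotheses carry over unchanged: $\cG$ is minimal and almost finite or purely infinite, $Y$ is nonempty, closed and nowhere dense, and $Y$ satisfies \eqref{eq:hausdorffreduction}. These are exactly the hypotheses under which Theorem~\ref{stabilizermaximal} holds for every $\Gamma$ in Setting~\ref{intermediategroup}.

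There is no real obstacle here. The only point needing a word is the inclusion $\sD(\cG)\subset\Ker I$, and that is immediate.
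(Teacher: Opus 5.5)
Your proposal is correct and is exactly the paper's proof. You specialize Theorem~\ref{stabilizermaximal} to $\Gamma=\sF(\cG)$, $\Gamma=\sD(\cG)$ and $\Gamma=\Ker I$, using $\St(\Gamma,Y)=\Gamma\cap\St(\cG,Y)$ and the inclusion $\sD(\cG)\subset\Ker I$ recorded in Section~\ref{subsec:homology}.
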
 

\begin{proof} 
Apply Theorem~\ref{stabilizermaximal} to 
$\Gamma=\sF(\cG)$, $\Gamma=\sD(\cG)$ and $\Gamma=\Ker I$. 
\end{proof} 

\begin{remark} 
For $\Gamma=\sF(\cG)$ and infinite $Y$, 
Theorem~\ref{stabilizeractionmaximal} agrees with 
\cite[Theorem 1.3]{GV26ETDS}; for finite $Y$, it agrees with 
\cite[Theorem 1.2]{GV26ETDS}. 
Our proof uses Property E and local minimality to treat both cases 
and extend the criterion to the intermediate groups considered here. 
Theorem~\ref{stabilizeractionmaximal} itself does not require 
the effective reduction to be Hausdorff. 

The hypothesis that the group is fully amplified prevents the direct 
application of these results when $\Gamma\subsetneq\sF(\cG)$. 
Indeed, \cite[Lemma 3.7]{Ma15crelle} shows that every arrow of $\cG$ 
is contained in a full bisection inducing an element of $\sD(\cG)$. 
Thus the full amplification of every group 
$\sD(\cG)\subset\Gamma\subset\sF(\cG)$ is $\sF(\cG)$, 
whereas a fully amplified group must coincide with its full 
amplification. 
\end{remark} 

\begin{remark}\label{rem:stabilizerabelianization} 
The stabilizer also maps onto the abelianization of the ambient 
topological full group. 
Thus the surjectivity condition appearing in 
\cite[Theorem 3.13]{Ma26maximal1} is automatic for stabilizers. 
Indeed, choose a nonempty clopen set $W\subset\cG^{(0)}\setminus Y$. 
Then $\sF(\cG)_W\subset\St(\cG,Y)$, and Lemma~\ref{localabelianization} 
gives 
\[ 
\St(\cG,Y)\sD(\cG)=\sF(\cG),\qquad 
\St(\cG,Y)/\St(\sD(\cG),Y)\cong H_1(\sF(\cG)). 
\] 
This holds for every proper closed set $Y$, without assuming maximality. 
\end{remark}

\begin{remark} 
Theorem~\ref{stabilizeractionmaximal} includes the case of a finite $Y$, 
where minimality of the action means transitivity. 
When $Y=\{y\}$ is a singleton this condition is vacuous, 
so that $\St(\Gamma,\{y\})$ is always a maximal subgroup of $\Gamma$. 
Equivalently, the action of $\Gamma$ on the orbit $\Gamma y$ is primitive; 
this is the usual correspondence between the subgroups of $\Gamma$ 
containing a point stabilizer and the blocks containing that point. 

For a singleton $Y$ neither Property E nor the almost finite 
or purely infinite hypothesis is needed. 
Let $\cG$ be a minimal ample groupoid with $\cG^{(0)}$ a Cantor set and 
let $\Gamma$ satisfy $\sD(\cG)\subset\Gamma\subset\sF(\cG)$. 
By Lemma~\ref{threecycle} (3) the orbit $\Gamma y$ is infinite, 
so $\St(\Gamma,\{y\})$ is a proper subgroup of $\Gamma$. 
Let $K$ be a subgroup with $\St(\Gamma,\{y\})\subsetneq K\subset\Gamma$ 
and choose $\gamma\in K$ with $y':=\gamma(y)\neq y$. 
Let $z\in\Gamma y\setminus\{y,y'\}$ and choose a clopen set $A$ 
with $y',z\in A$ and $y\notin A$. 
The points $y'$ and $z$ lie in a common $\cG|A$-orbit, 
so the proof of Lemma~\ref{threecycle} (3), applied to an arrow 
from $y'$ to $z$ and a clopen neighborhood of $z$ in $A$ 
not containing $y'$, produces 
$\sigma\in\sD(\cG|A)\subset\sD(\cG)_A$ with $\sigma(y')=z$. 
As $\supp(\sigma)\subset A$ and $y\notin A$, 
we have $\sigma\in\St(\Gamma,\{y\})\subset K$, 
and hence $\sigma\gamma\in K$ sends $y$ to $z$. 
Since $y,y'\in Ky$, we conclude that $Ky=\Gamma y$. 
Given $\alpha\in\Gamma$, choose $\kappa\in K$ with $\kappa(y)=\alpha(y)$; 
then $\kappa^{-1}\alpha\in\St(\Gamma,\{y\})\subset K$, so that $\alpha\in K$. 
Thus $K=\Gamma$. 
\end{remark}

\begin{example}\label{ex:AFnonetalereduction} 
Let $\mathcal R$ be the tail equivalence groupoid on 
$X:=\{0,1\}^{\N}$ with its usual AF topology, and put 
\[ 
Y:=\left\{w_1w_2\cdots\mid w_j\in\{00,1\}\text{ for all }j\in\N\right\}
\subset X. 
\] 
Thus $\mathcal R$ is the minimal UHF groupoid of type $2^\infty$, 
whose basic bisections replace finite prefixes of equal length. 
Concatenation of the words $00$ and $1$ identifies $Y$ with a 
two-symbol Cantor space. 
Moreover, every prefix occurring in $Y$ can be extended to a finite 
concatenation of these words, and then by $01$ to a cylinder 
disjoint from $Y$. 
Hence $Y$ is nowhere dense. 

The set $Y$ is not $\mathcal R$-\'etale. 
Indeed, the prefix replacement $0z\mapsto 1z$ gives an arrow 
from $0^\infty$ to $10^\infty$, both of which belong to $Y$. 
This arrow is not regular, since 
\[ 
0^{2k}1^\infty\in Y,\qquad 
10^{2k-1}1^\infty\notin Y 
\] 
for $k\geq1$, and $0^{2k}1^\infty\to0^\infty$ as $k\to\infty$. 

Nevertheless, the regular reduction $\mathcal R|Y$ is minimal. 
To describe it, use a state $A$ at the end of each complete code word 
and a state $B$ after the first symbol of $00$. 
Starting at $A$, the allowed transitions are 
\[ 
A\xrightarrow{1}A,\qquad 
A\xrightarrow{0}B,\qquad 
B\xrightarrow{0}A. 
\] 
For an admissible finite prefix $\mu$, the set of continuations 
$\{z\in X\mid\mu z\in Y\}$ is $Y$ or $0Y$, according as 
the terminal state is $A$ or $B$. 
These sets intersect only at $0^\infty$ and do not agree on any 
neighborhood of that point. 
Consequently, a prefix replacement between two admissible prefixes 
of equal length is regular at an arrow with endpoints in $Y$ 
if and only if their terminal states agree. 
The label map therefore identifies $\mathcal R|Y$ with the AF 
groupoid of the stationary Bratteli diagram rooted at $A$ 
with incidence matrix 
\[ 
M=\begin{bmatrix}1&1\\1&0\end{bmatrix}. 
\] 
Since every entry of $M^2$ is positive, this AF groupoid is minimal. 
It is principal, so $\mathcal R{\downarrow}Y=\mathcal R|Y$. 
Theorem~\ref{stabilizermaximal} now shows that 
$\St(\mathcal R,Y)$ is a maximal subgroup of 
$\sD(\mathcal R)=\sF(\mathcal R)$. 

The points $0^\infty$ and $10^\infty$ belong to the same 
$\mathcal R$-orbit, but their state paths remain distinct at 
every positive level. 
Thus they lie in different $\mathcal R|Y$-orbits, and no element 
of $\St(\mathcal R,Y)$ sends one to the other. 
This illustrates why the regular reduction, rather than the full 
restriction, records the orbits of the stabilizer on $Y$. 
\end{example}

The local preservation condition used to define regular reductions 
also yields a wide open subgroupoid on the whole unit space, 
whose topological full group is the stabilizer. 
This construction does not require minimality, almost finiteness or 
pure infiniteness. 

\begin{definition}\label{def:preservingsubgroupoid} 
Let $\cG$ be an ample groupoid with Cantor unit space and 
let $Y\subset\cG^{(0)}$ be a closed subset. 
We call 
\[ 
\cS_Y:=\bigcup\left\{U\in\cCO(\cG)\mid 
\text{$U$ is a bisection and }YU=UY\right\} 
\] 
the $Y$-preserving subgroupoid of $\cG$. 
\end{definition} 

The defining bisections are closed under inverses and nonempty 
products and cover the unit space. 
Hence $\cS_Y$ is a wide open subgroupoid of $\cG$. 
An arrow with both endpoints outside $Y$ lies in a bisection whose 
source and range miss $Y$, so the definition of regularity gives 
\[ 
\cS_Y=\cG|Y\sqcup\cG|(\cG^{(0)}\setminus Y) 
\] 
as sets. 
In particular, $Y$ is $\cS_Y$-invariant, and with the relative 
topologies, 
\[ 
\cS_Y|Y=\cG|Y,\qquad 
\cS_Y|(\cG^{(0)}\setminus Y)=\cG|(\cG^{(0)}\setminus Y). 
\] 
Moreover, a full bisection of $\cG$ preserves $Y$ precisely when it is 
contained in $\cS_Y$, and hence $\sF(\cS_Y)=\St(\cG,Y)$. 

Recall that a wide open subgroupoid $\cK\subset\cG$ is prime if 
there is no wide open subgroupoid $\cH$ with 
$\cK\subsetneq\cH\subsetneq\cG$ \cite[Definition 5.1]{Ma26maximal1}. 
The first part of the next proposition requires only minimality of 
$\cG$; the almost finite or purely infinite hypothesis is needed 
only for the final equivalence. 

\begin{proposition}\label{prop:preservingsubgroupoid} 
Let $\cG$ be a minimal ample groupoid with Cantor unit space, 
and let $Y\subset\cG^{(0)}$ be a nonempty proper closed set. 
Then the wide open subgroupoid $\cS_Y$ is proper in $\cG$, and 
\[ 
\cS_Y\subset\cG\text{ is prime} 
\iff \cG|Y\text{ is minimal}. 
\] 
If these conditions hold and $Y$ is nowhere dense, then $Y$ is the 
unique nonempty proper closed $\cS_Y$-invariant subset of 
$\cG^{(0)}$, and every $\cS_Y$-orbit in $\cG^{(0)}\setminus Y$ 
is dense in $\cG^{(0)}$. Moreover, $Y$ is either finite or a Cantor set. 

Suppose in addition that $\cG$ is either almost finite or purely 
infinite and that $Y$ is nowhere dense. Then, for every 
$\Gamma$ as in Setting~\ref{intermediategroup}, 
\[ 
\cS_Y\subset\cG\text{ is prime} 
\iff \St(\Gamma,Y)\text{ is maximal in }\Gamma. 
\] 
\end{proposition}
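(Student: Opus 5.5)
Properness of $\cS_Y$ comes first. Since $\cG$ is minimal and $Y$ is a nonempty proper closed set, $Y$ is not $\cG$-invariant, so some arrow $g$ has $s(g)\in Y$ and $r(g)\notin Y$. Such an $g$ does not lie in $\cS_Y=\cG|Y\sqcup\cG|(\cG^{(0)}\setminus Y)$, so $\cS_Y\neq\cG$.

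For the main equivalence, the key is a description of the wide open subgroupoids $\cH$ with $\cS_Y\subset\cH$. Such an $\cH$ is a union of compact open bisections. Suppose $\cH\neq\cS_Y$. Then $\cH$ contains an arrow $h\notin\cS_Y$, meaning either $h$ joins $Y$ to the complement of $Y$, or $h$ has both endpoints in $Y$ but is not regular. In both cases I expect to show that $\cH$ contains an arrow leaving $Y$, together with a whole compact open bisection $U\subset\cH$ with $s(U)$ a clopen neighbourhood of some $y\in Y$ and $r(U)\cap Y=\emptyset$. For a non-regular $h$, every bisection $U\ni h$ inside $\cH$ contains arrows with exactly one endpoint in $Y$, and after shrinking $U$ we obtain the required bisection. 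Conjugating by $\cS_Y$-bisections then gives, for each point $y'$ in the $\cG|Y$-orbit of $y$, a bisection of $\cH$ moving a neighbourhood of $y'$ off $Y$. Two complementary arguments follow.

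\textbf{($\cG|Y$ minimal $\Rightarrow$ prime).} The orbit of $y$ is dense in $Y$, so by compactness finitely many clopen sets $A_i$ covering $Y$ each admit a bisection in $\cH$ sending $A_i$ into $X\setminus Y$. Composing with bisections of $\cG|(X\setminus Y)\subset\cS_Y$, the groupoid $\cH$ contains every arrow from $A_i$ to $X\setminus Y$. To see this, take $g$ with $s(g)\in A_i$ and $r(g)\notin Y$. Write $g=(gk^{-1})k$, where $k\in\cH$ sends $s(g)$ off $Y$. The arrow $gk^{-1}$ has both endpoints off $Y$, so it lies in $\cS_Y$. Hence every arrow with an endpoint off $Y$ lies in $\cH$. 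An arrow with both endpoints in $Y$ factors through a point off $Y$, using minimality of $\cG$ to reach $X\setminus Y$, so $\cH=\cG$. This case needs no nowhere density beyond $Y\neq X$.

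\textbf{(Prime $\Rightarrow$ $\cG|Y$ minimal).} Conversely, let $Z\subsetneq Y$ be a nonempty closed $\cG|Y$-invariant set. Then $\cS_Y\subsetneq\cS_Z$ should hold, with $\cS_Z\neq\cG$ by the first paragraph. For $\cS_Y\subset\cS_Z$: a bisection $U$ with $YU=UY$ can be split into $U(X\setminus Y)$-type pieces and pieces near $Y$, and invariance of $Z$ under $\cG|Y$ gives $ZU=UZ$ locally. This requires some care with points of $U$ near $Y$ but not in it; shrinking clopen pieces should handle it. Strictness follows because an arrow from $y\in Y\setminus Z$ to a point outside $Y$, lying in a bisection avoiding $Z$, belongs to $\cS_Z\setminus\cS_Y$.

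For the remaining claims, assume $\cG|Y$ is minimal and $Y$ is nowhere dense. The $\cS_Y$-orbits in $X\setminus Y$ are the $\cG|(X\setminus Y)$-orbits. Density of these should follow from minimality of $\cG$ together with the fact that $X\setminus Y$ is dense and open: any arrow of $\cG$ between two points of the open set $X\setminus Y$ lies in $\cG|(X\setminus Y)$, and arrows through $Y$ can be perturbed. I expect this perturbation step to be the main obstacle, and I would try routing through points near $Y$ using bisections. Uniqueness of $Y$ as an invariant set follows: an invariant closed set meeting $X\setminus Y$ is all of $X$, while one inside $Y$ equals $Y$ by minimality of $\cG|Y$. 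The finite-or-Cantor statement is the last part of Proposition~\ref{stabilizerminimality}. Finally, the last equivalence combines the main equivalence with Proposition~\ref{stabilizerminimality} and Theorem~\ref{stabilizeractionmaximal}.
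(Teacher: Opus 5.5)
Your outline follows the paper's proof closely. You get properness from non-invariance of $Y$, primeness by pushing every point of $Y$ off $Y$ inside a larger wide open subgroupoid and factoring arrows through $\cG^{(0)}\setminus Y$, and non-primeness via $\cS_Y\subsetneq\cS_Z\subsetneq\cG$. The last equivalence comes via Proposition~\ref{stabilizerminimality} and Theorem~\ref{stabilizeractionmaximal}. The gap is the density of $\cS_Y$-orbits off $Y$: you leave it unproved, and the perturbation route you suggest rests on a misconception. No perturbation is needed. Two points in the same $\cG$-orbit are joined by a single arrow $g$, not a chain. If $s(g)$ and $r(g)$ both lie in the open set $\cG^{(0)}\setminus Y$, then $g\in\cG|(\cG^{(0)}\setminus Y)\subset\cS_Y$. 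So the $\cS_Y$-orbit of $x\notin Y$ is exactly the $\cG$-orbit of $x$ with $Y$ removed. For a nonempty open $O$, the set $O\setminus Y$ is nonempty and open because $Y$ is closed and nowhere dense. It therefore meets the dense $\cG$-orbit of $x$. Your uniqueness argument depends on this density claim and is correct once it is supplied.

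There is also a faulty inference in the direction from minimality of $\cG|Y$ to primeness. From ``the orbit of $y$ is dense in $Y$'' you cannot conclude that the neighbourhoods you build around its points cover $Y$. An open set containing a dense subset of $Y$ need not contain $Y$, and compactness does not help. What you need is minimality at an arbitrary point $y''\in Y$. Its $\cG|Y$-orbit meets the nonempty relatively open set $s(U)\cap Y$. Composing $U$ with an $\cS_Y$-bisection through such an arrow then pushes a neighbourhood of $y''$ off $Y$ inside $\cH$. Equivalently, as in the paper, $O:=r\bigl(s^{-1}(\cG^{(0)}\setminus Y)\cap\cH\bigr)$ is open, $\cH$-invariant and meets $Y$. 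Hence $Y\setminus O$ is a proper closed $\cG|Y$-invariant subset of $Y$, and so it is empty. Finally, $\cS_Y\subset\cS_Z$ needs no splitting of bisections. If $YU=UY$ and $h\in U$ has one endpoint in $Z\subset Y$, then both endpoints lie in $Y$, so $h\in\cG|Y$. Then $\cG|Y$-invariance of $Z$ puts the other endpoint in $Z$.
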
 

\begin{proof} 
The subgroupoid $\cS_Y$ is proper because $Y$ is a nonempty proper 
closed $\cS_Y$-invariant subset and $\cG$ is minimal. 
Suppose first that $\cG|Y$ is minimal, and let 
$\cS_Y\subsetneq\cK\subset\cG$ be a wide open subgroupoid. 
If $Y$ were $\cK$-invariant, every compact open bisection of $\cK$ 
would preserve $Y$, giving $\cK\subset\cS_Y$, a contradiction. 
Put $U:=\cG^{(0)}\setminus Y$ and $O:=r\left(s^{-1}(U)\cap\cK\right)$. 
Then $O$ is open and $\cK$-invariant, contains $U$, and meets $Y$. 
The set $Y\setminus O$ is therefore a proper closed 
$\cG|Y$-invariant subset of $Y$, so minimality gives $O=\cG^{(0)}$. 
For any $g\in\cG$, choose $k_r,k_s\in\cK$ carrying $r(g)$ and 
$s(g)$, respectively, into $U$. Then 
\[ 
k_rgk_s^{-1}\in\cG|U=\cS_Y|U\subset\cK, 
\] 
and hence $g\in\cK$. Thus $\cK=\cG$, proving primeness. 

Conversely, suppose that $\cG|Y$ is not minimal. 
Choose a nonempty proper closed $\cG|Y$-invariant subset $Z\subset Y$. 
Every bisection preserving $Y$ preserves $Z$, and hence 
$\cS_Y\subset\cS_Z$. The inclusion is strict. Indeed, choose 
$y\in Y\setminus Z$. By minimality of $\cG$, there is an arrow with 
source $y$ and range in $\cG^{(0)}\setminus Y$. Shrinking a compact 
open bisection around it, we obtain a bisection whose source and range 
are disjoint from $Z$ but which does not preserve $Y$. It belongs to 
$\cS_Z\setminus\cS_Y$. Since $Z$ is nonempty and proper, 
$\cS_Z\neq\cG$ by minimality. Hence $\cS_Y$ is not prime. 

Assume now that $Y$ is nowhere dense and $\cG|Y$ is minimal. 
For $x\in\cG^{(0)}\setminus Y$, its $\cS_Y$-orbit is the intersection 
of its $\cG$-orbit with $\cG^{(0)}\setminus Y$, and is therefore dense 
in $\cG^{(0)}$. Hence every proper closed $\cS_Y$-invariant subset is 
contained in $Y$, and minimality of $\cG|Y$ gives the asserted 
uniqueness. If $Y$ is infinite, its nonisolated points form a nonempty 
closed $\cG|Y$-invariant subset, and therefore equal $Y$. Thus $Y$ is 
a Cantor set. 

Finally, Proposition~\ref{stabilizerminimality} shows that 
$\cG|Y$ is minimal if and only if 
$\St(\Gamma,Y)$ acts minimally on $Y$. 
The last equivalence therefore follows from 
Theorem~\ref{stabilizeractionmaximal}. 
\end{proof}

\section{Isomorphism invariants and realizations} 
\label{sec:stabilizerinvariants} 

Throughout this section, $\cG$ is an effective ample groupoid 
with Cantor unit space. 
We study the isomorphism types of the stabilizers considered in 
Section~\ref{sec:stabilizer}. 
Section~\ref{subsec:spatial} proves spatial realization and 
identifies groupoid invariants determined by these groups. 
Section~\ref{subsec:effectiverealization} then shows that every 
AF groupoid with compact unit space is strongly realizable in 
any minimal ample groupoid, as a full restriction to a closed 
nowhere dense $\cG$-\'etale subset. 
The same realization theorem yields $2^{\aleph_0}$ pairwise 
nonisomorphic stabilizers in every group between 
$\sD(\cG)$ and $\sF(\cG)$. 
If the ambient groupoid is almost finite or purely infinite, 
these stabilizers are maximal subgroups. 
At the $C^*$-algebra level, the realization theorem gives explicit AF 
subquotients compatible with the canonical diagonal subalgebras, 
including a concrete CAR subquotient as in Glimm's theorem.

\subsection{Spatial realization for stabilizers}\label{subsec:spatial} 

We show that isomorphisms between the stabilizer groups 
considered here are induced by homeomorphisms of 
the ambient unit spaces. 
No almost finite or purely infinite hypothesis is imposed 
in this subsection. 
We first identify the effective reduction with the groupoid 
of germs of the restricted stabilizer action. 

\begin{lemma}\label{stabilizergerms} 
Let $\cG$ be an ample groupoid with Cantor unit space, 
let $\Gamma$ be a group with $\sD(\cG)\subset\Gamma\subset\sF(\cG)$, 
and let $Y\subset\cG^{(0)}$ be an infinite closed subset 
satisfying \eqref{eq:hausdorffreduction}. 
Suppose that the regular reduction $\cG|Y$ is minimal. 
Then $\cG{\downarrow}Y$ is canonically isomorphic to the groupoid 
of germs of the action of $\St(\Gamma,Y)$ on $Y$. 
\end{lemma}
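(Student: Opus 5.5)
The plan is to compare two groupoids of germs on $Y$. Write $\cH:=\cG|Y$ and $\pi:\cH\to\Eff(\cH)=\cG{\downarrow}Y$. Let $\mathcal{W}$ be the groupoid of germs of the action of $\St(\Gamma,Y)$ on $Y$: its arrows are the germs $[\alpha|_Y,y]$ with $\alpha\in\St(\Gamma,Y)$ and $y\in Y$. Two elements have the same germ at $y$ when their restrictions to $Y$ agree on a relative neighborhood of $y$ in $Y$. By Definition~\ref{def:effectivequotient}, $\Eff(\cH)$ is the groupoid of germs of the local homeomorphisms of $Y$ induced by compact open bisections of $\cH$. Each $\alpha\in\St(\Gamma,Y)$ is implemented by a full bisection $U$, and $\alpha|_Y=\theta_{YUY}$ with $YUY\subset\cH$. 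So the germ of $\alpha|_Y$ at $y$ is the germ of a bisection of $\cH$. This gives a map $\Psi:\mathcal{W}\to\cG{\downarrow}Y$, $[\alpha|_Y,y]\mapsto\pi(g)$, where $g$ is the unique arrow of $YUY$ with source $y$. It is well defined and injective, because both sides are germ groupoids of local homeomorphisms of the same space $Y$ and germs are compared in the same way. It is a homomorphism because $p$ is multiplicative (Proposition~\ref{structureofSt}(1)). It is a homeomorphism onto its image, since the basic open sets $\{[\alpha|_Y,y]\mid y\in A\cap Y\}$ correspond to $\pi(YUA)$.

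The real content is surjectivity. We must show that every germ of $\cG{\downarrow}Y$ is the germ of an element of $\St(\Gamma,Y)$, without any almost finite or purely infinite hypothesis; that is where the obstacle lies. Let $g\in\cH$, choose $U\in\cCO(\cG)$ with $g\in U$ and $YU=UY$, and put $x=s(g)$. If $r(g)=x$, the germ is an isotropy germ; this case is treated in the next paragraph. If $r(g)\ne x$, I would copy the proof of Proposition~\ref{stabilizerminimality}, which uses only minimality. Pick distinct $z_1,z_2\in\cG^{(0)}\setminus Y$ in the orbit of $x$. Choose bisections $W_1,W_2$ carrying $x$ to $z_j$, and shrink $U,W_1,W_2$ to a common source, so that $s(U),r(U),r(W_1),r(W_2)$ are disjoint and $r(W_j)\cap Y=\emptyset$. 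Then Lemma~\ref{involution} gives $\alpha=\tau_U\tau_{W_2W_1^{-1}}\in\sD(\cG)\cap\St(\cG,Y)\subset\St(\Gamma,Y)$. This $\alpha$ agrees with $\theta_U$ on $s(U)$, so it has the same germ at $x$ as $g$. Shrinking $U$ keeps $YU=UY$, so nothing is lost.

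It remains to handle $g$ with $r(g)=s(g)=x$. Here minimality of $\cG|Y$ and infiniteness of $Y$ are used: $Y$ is then a Cantor set by Proposition~\ref{stabilizerminimality}, so $x$ is not isolated in $Y$. Choose an arrow $h\in\cH$ with $s(h)=x$ and $r(h)=x'\neq x$. Such an $h$ exists because the $\cH$-orbit of $x$ is dense in the infinite set $Y$, hence contains a point other than $x$. Factor $g=h^{-1}(hg)$. Both $h$ and $hg$ have distinct source and range, so by the previous step each has the germ of an element of $\St(\Gamma,Y)$. Hence so does $g$, since the germs of stabilizer elements form a groupoid and $\Psi$ is a homomorphism. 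The step I would check most carefully is that the shrinkings do not destroy $YU=UY$ or the disjointness conditions; the sub-bisection remark after Definition~\ref{def:etalesubsets} covers the first.

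Canonicity follows because $\Psi$ is built from the restriction map $q$ of Proposition~\ref{structureofSt}(2), with no auxiliary choices. Thus $\Psi$ is an isomorphism of étale groupoids $\mathcal{W}\cong\cG{\downarrow}Y$.
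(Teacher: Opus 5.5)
Your injectivity, homomorphism and topology checks are fine, and so is your reduction of the isotropy case via $g=h^{-1}(hg)$. The gap is in the surjectivity step for $r(g)\neq x$. There you pick distinct $z_1,z_2\in\cG^{(0)}\setminus Y$ in the $\cG$-orbit of $x$. That is exactly the point where the proof of Proposition~\ref{stabilizerminimality} uses minimality of $\cG$ and properness of $Y$; when you say it ``uses only minimality'', the minimality meant is that of $\cG$.

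Neither assumption is a hypothesis of the lemma. Only the regular reduction $\cG|Y$ is assumed minimal, and $Y$ is not assumed proper or nowhere dense. Suppose $Y=\cG^{(0)}$, or $Y$ is a closed $\cG$-invariant subset of a non-minimal $\cG$ (for instance one component of $\cG_1\sqcup\cG_2$). Then the $\cG$-orbit of $x$ never leaves $Y$, no such $z_j$ exist, and your element $\tau_U\tau_{W_2W_1^{-1}}$ cannot be built. Your appeals to Propositions~\ref{structureofSt} and~\ref{stabilizerminimality}, for multiplicativity of $p$ and for $Y$ being a Cantor set, also formally rely on unavailable hypotheses. Those facts hold without them, so that part is harmless.

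The fix is to take the auxiliary transposition from a $Y$-preserving bisection, rather than one supported off $Y$. Since $\cG|Y$ is minimal and $Y$ is infinite, every $\cG|Y$-orbit is infinite: a finite dense orbit would be closed, hence all of $Y$. So choose $h\in\cG|Y$ with $s(h)=x$ and $r(h)\notin\{x,r(g)\}$, and a bisection $V\in\cCO(\cG)$ containing $h$ with $YV=VY$. Shrink $U$ and $V$ to a common source neighborhood of $x$ so that $s(U)$, $r(U)$ and $r(V)$ are mutually disjoint. Lemma~\ref{threecycle} (1)--(2), which needs no minimality, then gives the three-cycle $\tau_V\tau_U=[\tau_U,\tau_V]\in\sD(\cG)\subset\Gamma$. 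It agrees with $\theta_U$ on $s(U)$, and it preserves $Y$ because both transpositions do. This is the paper's argument. With it in place of your off-$Y$ involution, the rest of your proof goes through under the lemma's actual hypotheses.
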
 

\begin{proof} 
Since $\cG|Y$ is minimal and $Y$ is infinite, every $\cG|Y$-orbit 
is infinite. 
We show that the germ on $Y$ induced by each arrow of $\cG|Y$ 
is realized by an element of $\St(\Gamma,Y)$. 
Let $g\in\cG|Y$ with $x:=s(g)\neq r(g)$. 
Choose $h\in\cG|Y$ with $s(h)=x$ and $r(h)\notin\{x,r(g)\}$. 
By regularity, there are compact open bisections $U,V$ of $\cG$ 
containing $g,h$, respectively, such that $YU=UY$ and $YV=VY$. 
After shrinking to a common source neighborhood of $x$, 
we may assume that $s(U)=s(V)$ and that 
$s(U)$, $r(U)$ and $r(V)$ are mutually disjoint. 
By Lemma~\ref{threecycle} (1)--(2), the three-cycle 
\[ 
\alpha:=\tau_V\tau_U=[\tau_U,\tau_V]\in\sD(\cG) 
\] 
agrees with $\theta_U$ on $s(U)$. 
Both transpositions preserve $Y$, so $\alpha\in\St(\Gamma,Y)$. 
Its restriction to $Y$ therefore has the required germ at $x$. 

If $s(g)=r(g)=x$, choose $h\in\cG|Y$ with 
$s(h)=x\neq r(h)$ and write $g=(gh^{-1})h$. 
Both factors have distinct source and range, so their germs 
are realized by the preceding construction. 
The product of the corresponding stabilizer elements 
then realizes the germ induced by $g$. 
Conversely, every element of $\St(\Gamma,Y)$ restricts 
to a homeomorphism of $Y$ induced by a full bisection of $\cG|Y$. 
Thus the two actions by local homeomorphisms have exactly 
the same germs, with the same topology. 
By Definition~\ref{def:effectivequotient}, 
their groupoid of germs is $\Eff(\cG|Y)=\cG{\downarrow}Y$. 
\end{proof}

For spatial realization, Lemma~\ref{involution} supplies 
the involutions needed for (F1) and (F2), 
while Lemma~\ref{threecycle} supplies the elements 
of order three needed for (F3). 

\begin{theorem}\label{spatial} 
For $i=1,2$, let $\cG_i$ be a minimal ample groupoid 
with $\cG_i^{(0)}$ a Cantor set, 
let $\Gamma_i$ be a subgroup of $\sF(\cG_i)$ containing $\sD(\cG_i)$, 
and let $Y_i\subset\cG_i^{(0)}$ be an infinite closed set 
that is nowhere dense in $\cG_i^{(0)}$. 
Suppose that $\St(\cG_i,Y_i)$ acts minimally on $Y_i$. 
Then every group isomorphism 
$\Phi:\St(\Gamma_1,Y_1)\to\St(\Gamma_2,Y_2)$ is spatially implemented, 
that is, there exists a homeomorphism 
$\phi:\cG_1^{(0)}\to\cG_2^{(0)}$ such that 
$\Phi(\alpha)=\phi\circ\alpha\circ\phi^{-1}$ holds 
for every $\alpha\in\St(\Gamma_1,Y_1)$. 
\end{theorem}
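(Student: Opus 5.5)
The plan is to apply the spatial reconstruction theorem of \cite{Ma15crelle} (the result based on conditions (F1)--(F3)) to the groups $\St(\Gamma_i,Y_i)$, viewed as groups of homeomorphisms of $X_i:=\cG_i^{(0)}$. That theorem says the following. Suppose a group $\Lambda\subset\Homeo(X)$ of a Cantor set $X$ satisfies three local richness conditions, formulated with supports of involutions and elements of order three. Then every isomorphism between two such groups is implemented by a homeomorphism. So the whole task is to verify these conditions for $\Lambda_i:=\St(\Gamma_i,Y_i)$, using only elements of $\sD(\cG_i)$ that stabilize $Y_i$. These elements lie in $\Lambda_i$ regardless of which $\Gamma_i$ is chosen. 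I would recall the conditions in the form: (F1) for every nonempty clopen $A$ there is an involution in $\Lambda$ whose support is contained in $A$; (F2) for every involution $\tau$ and clopen $A\subset\supp(\tau)$ there is a nontrivial involution with support in $A$ commuting appropriately with $\tau$ (or the analogous local refinement); (F3) for every nonempty clopen $A$ there is an element of order three with support in $A$, together with local transitivity statements.

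For open sets $A$ meeting $X_i\setminus Y_i$, the plan is as follows. Since $Y_i$ is nowhere dense, $A$ contains a nonempty clopen $B$ disjoint from $Y_i$. Lemma~\ref{threecycle} applied to $\cG_i|B$ gives three-cycles in $\sD(\cG_i|B)\subset\sD(\cG_i)_B$. These fix $Y_i$ pointwise, so they lie in $\Lambda_i$. Lemma~\ref{involution} with bisections inside $B$ likewise gives involutions in $\sD(\cG_i)_B$ with prescribed small supports. Minimality of $\cG_i|B$ supplies the required bisections $Z$. Thus every nonempty open set contains the supports of involutions and of order-three elements from $\Lambda_i$, which handles (F1) and (F3). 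For (F2) we need to refine involutions inside the support of a given one. Given an involution $\tau\in\Lambda_i$ and a clopen $A\subset\supp(\tau)$, I would again shrink to $B\subset A\setminus Y_i$ with $B\cap\tau(B)=\emptyset$. Then I would build $\tau_{V_1}\tau_{V_2}$ supported in $B\cup\tau(B)$ with $\tau$-equivariant pieces, choosing $V_2=\tau V_1\tau$.

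The main obstacle is whether the reconstruction theorem also needs nondegeneracy conditions near points of $Y_i$. Examples are that the $\Lambda_i$-orbits avoid certain degeneracies, or that the reconstructed points separate $Y_i$ correctly. Because points of $Y_i$ are limits of points outside $Y_i$, local conditions checked on a dense open set may suffice. If the theorem requires the conditions for every clopen set, they are still satisfied, since every nonempty clopen set meets $X_i\setminus Y_i$. Minimality of $\St(\cG_i,Y_i)$ on $Y_i$, equivalently of $\cG_i|Y_i$ by Proposition~\ref{stabilizerminimality}, together with the assumption that $Y_i$ is infinite, will be used where the theorem needs that no point is fixed by the whole group and that no orbit is finite. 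By Proposition~\ref{stabilizerminimality}, $Y_i$ is a Cantor set. Hence $\Lambda_i$-orbits in $Y_i$ are infinite, as in Lemma~\ref{stabilizergerms}, and orbits outside $Y_i$ are infinite by Lemma~\ref{threecycle} (3). With these verified, the theorem yields a homeomorphism $\phi:X_1\to X_2$ satisfying $\Phi(\alpha)=\phi\alpha\phi^{-1}$.

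Finally, I would remark that $\phi(Y_1)=Y_2$ follows afterwards. $Y_i$ is the unique nonempty proper closed $\Lambda_i$-invariant nowhere dense set whose complement consists of dense orbits, as in Proposition~\ref{prop:preservingsubgroupoid}, and $\phi$ intertwines the two actions.
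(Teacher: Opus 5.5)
There is a genuine gap in your verification of (F1). In \cite[Definition 3.1]{Ma15crelle} this condition is pointwise. For every $x\in X$ and every clopen neighbourhood $A$ of $x$, there must be a nontrivial involution $\beta\in\Lambda$ with $x\in\supp(\beta)\subset A$. Similarly, (F2) asks for an involution $\beta$ with $\supp(\beta)\subset A\cup\alpha(A)$ that \emph{agrees} with $\alpha$ on $\supp(\beta)$, not merely one commuting with $\alpha$.

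For $x\in Y_i$, every element you construct has clopen support disjoint from $Y_i$, so it can never witness (F1) at $x$. Density of $X_i\setminus Y_i$ does not help, because the reconstruction needs supports of involutions to form a neighbourhood base at every point, including the points of $Y_i$. A symptom is that your argument never actually uses the hypotheses that $\St(\cG_i,Y_i)$ acts minimally on $Y_i$ and that $Y_i$ is infinite. Your checks go through verbatim for the subgroup of elements of $\Lambda_i$ that are the identity near $Y_i$. They also go through for a singleton $Y=\{y\}$ with trivial isotropy, where (F1) fails at $y$, since every stabilizer element is then the identity near $y$.

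The missing step is to construct small involutions in $\sD(\cG)$ that move points of $Y$ while preserving $Y$; this is exactly where the hypotheses enter.
\begin{itemize}
\item[(a)] For $x\in Y$, Proposition~\ref{stabilizerminimality} makes $Y$ a Cantor set, so $(A\cap Y)\setminus\{x\}$ is a nonempty relatively open subset of $Y$.
\item[(b)] Minimality of $\St(\cG,Y)$ on $Y$ then gives $\theta_U\in\St(\cG,Y)$ with $\theta_U(x)\in(A\cap Y)\setminus\{x\}$. In particular $YU=UY$.
\item[(c)] Choose a clopen $B_0\ni x$ inside $s(U)$ with $B_0\sqcup\theta_U(B_0)\subset A$. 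By minimality of $\cG$, choose a compact open bisection $Z$ with $x\in s(Z)$ and $s(Z)\sqcup r(Z)\subset B_0$. Put $B_1:=s(Z)$, $B_2:=r(Z)$ and $V_j:=UB_j$.
\item[(d)] Lemma~\ref{involution} shows that $\beta:=\tau_{V_1}\tau_{V_2}$ is an involution in $\sD(\cG)$ with $x\in B_1\subset\supp(\beta)\subset A$.
\item[(e)] Each $\tau_{V_j}$ preserves $Y$, because $V_j\subset U$ inherits $YV_j=V_jY$. Hence $\beta\in\Lambda$.
\end{itemize}

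The same recipe handles (F2). Replace $U$ by the full bisection $W$ of the given involution $\alpha$, so that $V_j=WB_j$. This makes $\beta$ agree with $\alpha$ on its support, and $\beta$ preserves $Y$ because $\alpha$ does and $\alpha$ maps $\supp(\beta)$ onto itself. Your choice $V_2=\tau V_1\tau$ does not give that agreement. Your treatment of (F3) is fine, and so is (F1) at points outside $Y_i$, once you require $x\in\supp(\beta)$ by choosing the clopen set $B$ to contain $x$.
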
 

\begin{proof} 
It suffices to verify that the group $\St(\Gamma_i,Y_i)$ is of class F 
in the sense of \cite[Definition 3.1]{Ma15crelle} 
and apply \cite[Theorem 3.5]{Ma15crelle} 
(see also Definition 6.3 and Theorem 6.6 of \cite{NO19IJM}). 
Let us omit the subscript $i$ and write $\Lambda:=\St(\Gamma,Y)$. 
All the elements produced below will lie in $\sD(\cG)$, 
so that they belong to $\Lambda$ as soon as they preserve $Y$. 

Condition (F0) is automatic, 
because $\Lambda$ is a subgroup of $\sF(\cG)$ 
(see \cite[Lemma 2.2]{Ma15crelle}). 

For (F1), take $x\in\cG^{(0)}$ and a clopen neighborhood $A$ of $x$. 
We first choose a bisection $U\in\cCO(\cG)$ and 
a clopen set $B_0\subset s(U)$ as follows. 

Assume first that $x$ is not in $Y$. 
Shrinking $A$, we may assume $A\cap Y=\emptyset$. 
Since $\cG$ is minimal, 
there exists a bisection $U\in\cCO(\cG)$ 
such that $x\in s(U)$ and $r(U)\sqcup s(U)\subset A$; 
we put $B_0:=s(U)$. 

Assume next that $x$ is in $Y$. 
By Proposition~\ref{stabilizerminimality}, $Y$ is a Cantor set. 
Therefore $(A\cap Y)\setminus\{x\}$ is 
a nonempty relatively open subset of $Y$, 
and the minimality of the action of $\St(\cG,Y)$ on $Y$ yields 
$\alpha=\theta_U\in\St(\cG,Y)$ with $\alpha(x)\in(A\cap Y)\setminus\{x\}$. 
Since $\alpha$ preserves $Y$, we have $YU=UY$. 
Choose a clopen neighborhood $B_0\subset s(U)$ of $x$ so that 
$B_0\sqcup\alpha(B_0)\subset A$. 

In either case, $x\in B_0\subset s(U)$, 
the sets $B_0$ and $\theta_U(B_0)$ are disjoint and 
their union is contained in $A$. 
By minimality there exists $g\in\cG$ 
with $s(g)=x$ and $r(g)\in B_0\setminus\{x\}$, 
and we may choose a bisection $Z\in\cCO(\cG)$ containing $g$ 
with $s(Z)\sqcup r(Z)\subset B_0$. 
Put $B_1:=s(Z)$, $B_2:=r(Z)$ and $V_j:=UB_j$. 
Then $r(V_j)\cap s(V_j)=\theta_U(B_j)\cap B_j=\emptyset$, 
the supports $B_1\sqcup\theta_U(B_1)$ and $B_2\sqcup\theta_U(B_2)$ 
are disjoint, and $Z$ has source $B_1$ and range $B_2$. 
By Lemma~\ref{involution}, 
$\beta:=\tau_{V_1}\tau_{V_2}$ is an involution in $\sD(\cG)$ with 
\[ 
x\in B_1\subset\supp(\beta) 
=B_1\sqcup\theta_U(B_1)\sqcup B_2\sqcup\theta_U(B_2)\subset A. 
\] 
In the first case $\supp(\beta)\subset A$ is disjoint from $Y$, 
and in the second case each $\tau_{V_j}$ preserves $Y$ 
because $YU=UY$; in both cases $\beta\in\Lambda$, and (F1) follows. 

To verify (F2), 
let $\alpha\in\Lambda\setminus\{1\}$ satisfy $\alpha^2=1$, 
let $W$ be the full bisection with $\alpha=\theta_W$, 
and let $A\subset\supp(\alpha)$ be a nonempty clopen set. 
Since $\supp(\alpha)$ is the closure of the set of points moved by 
$\alpha$, we can choose $x_0\in A$ with $\alpha(x_0)\neq x_0$. 
Choose a clopen neighborhood $B_0\subset A$ of $x_0$ such that 
$B_0\cap\alpha(B_0)=\emptyset$. 
As above, minimality provides a bisection $Z\in\cCO(\cG)$ with 
$s(Z)\sqcup r(Z)\subset B_0$; 
put $B_1:=s(Z)$, $B_2:=r(Z)$ and $V_j:=WB_j$. 
Since $\alpha^2=1$, the map $\tau_{V_j}$ agrees with $\alpha$ 
on $B_j\sqcup\alpha(B_j)$ and is the identity elsewhere. 
The four sets $B_1,\alpha(B_1),B_2,\alpha(B_2)$ are mutually disjoint, 
and $Z$ has source $B_1$ and range $B_2$, 
so Lemma~\ref{involution} shows that 
$\beta:=\tau_{V_1}\tau_{V_2}$ is an involution in $\sD(\cG)$. 
Its support is $B_1\sqcup\alpha(B_1)\sqcup B_2\sqcup\alpha(B_2)$, 
which is contained in $A\cup\alpha(A)$, 
and $\beta$ agrees with $\alpha$ on its support. 
Finally, $\beta$ preserves $Y$ because $\alpha$ does and 
$\alpha$ maps $\supp(\beta)$ onto itself. 
Hence $\beta\in\Lambda\setminus\{1\}$ verifies (F2). 

For (F3), let $A\subset\cG^{(0)}$ be a nonempty clopen set. 
Since $Y$ is nowhere dense, 
we may choose a nonempty clopen subset $B\subset A\setminus Y$. 
By minimality of $\cG$, we can choose bisections $U,V\in\cCO(\cG)$ 
with $s(U)=s(V)$ such that $s(U)$, $r(U)$ and $r(V)$ are nonempty, 
mutually disjoint subsets of $B$. 
By Lemma~\ref{threecycle} (1), 
the element $\tau_V\tau_U$ belongs to $\sD(\cG)$, 
has order three and has support contained in $B$. 
As $B\cap Y=\emptyset$, it belongs to $\Lambda$ and verifies (F3). 
\end{proof}

The next theorem shows that 
the abstract isomorphism class of a stabilizer records 
two kinds of information: 
the Kakutani equivalence class of the ambient groupoid and, 
when the effective reductions are Hausdorff, 
the isomorphism class of the effective reduction to the closed set. 
These provide two obstructions to isomorphism between stabilizers, 
arising respectively from the action away from the closed set 
and the action on it. 

\begin{theorem}\label{thm:stabilizerinvariants} 
For $i=1,2$, let $\cG_i$ be a minimal ample groupoid 
with $\cG_i^{(0)}$ a Cantor set, 
let $\Gamma_i$ be a subgroup of $\sF(\cG_i)$ containing $\sD(\cG_i)$, 
and let $Y_i\subset\cG_i^{(0)}$ be an infinite closed set 
that is nowhere dense in $\cG_i^{(0)}$. 
Suppose that $\St(\cG_i,Y_i)$ acts minimally on $Y_i$. 
If $\St(\Gamma_1,Y_1)$ is isomorphic to $\St(\Gamma_2,Y_2)$, 
the following hold. 
\begin{enumerate} 
\item $\cG_1$ is Kakutani equivalent to $\cG_2$. 
\item Suppose further that each $Y_i$ satisfies 
\eqref{eq:hausdorffreduction} with respect to $\cG_i$. 
Then $\cG_1{\downarrow}Y_1$ is isomorphic to $\cG_2{\downarrow}Y_2$. 
\end{enumerate} 
\end{theorem}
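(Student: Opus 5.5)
Fix an isomorphism $\Phi:\St(\Gamma_1,Y_1)\to\St(\Gamma_2,Y_2)$. By Theorem~\ref{spatial}, $\Phi$ is implemented by a homeomorphism $\phi:\cG_1^{(0)}\to\cG_2^{(0)}$, so $\phi\circ\alpha\circ\phi^{-1}=\Phi(\alpha)$ for every $\alpha\in\Lambda_1:=\St(\Gamma_1,Y_1)$. The first step is to show $\phi(Y_1)=Y_2$. We have $\Lambda_1=\St(\Gamma_1,Y_1)$, and by Proposition~\ref{stabilizerminimality} $Y_1$ is a Cantor set. Points outside $Y_1$ have a neighbourhood $A$ disjoint from $Y_1$ on which $\Lambda_1$ contains all of $\sD(\cG_1)_A$, which acts minimally on $A$ (Lemma~\ref{threecycle} (3)). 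Points of $Y_1$ admit no such neighbourhood: any nonempty clopen $A$ meeting $Y_1$ contains an element of $\sD(\cG_1)_A$ moving a point of $Y_1$ off $Y_1$, which is not in $\Lambda_1$. So $Y_1$ is characterized dynamically as the set of points at which the local subgroups of $\Lambda_1$ fail to be ``locally full'' in the sense that $\Lambda_1$-orbits of nearby points never leave a closed nowhere dense $\Lambda_1$-invariant set. Concretely, $Y_1$ is the unique nonempty proper closed $\Lambda_1$-invariant subset, since orbits off $Y_1$ are dense by the argument of Proposition~\ref{prop:preservingsubgroupoid}. Because this property is preserved by spatial conjugacy, $\phi(Y_1)=Y_2$.

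For (1), work away from $Y_i$. Choose a nonempty clopen $W_1\subset\cG_1^{(0)}\setminus Y_1$ and put $W_2:=\phi(W_1)$, which is clopen and disjoint from $Y_2$. The groupoid of germs of $\Lambda_i$ restricted to $W_i$ is computed using elements supported in $W_i$. Since $\Lambda_i\supset\sD(\cG_i)_{W_i}\supset\sD(\cG_i|W_i)$, and by \cite[Lemma 3.7]{Ma15crelle} every arrow of $\cG_i|W_i$ lies in a full bisection of $\cG_i|W_i$ inducing an element of $\sD(\cG_i|W_i)$, every germ of $\cG_i|W_i$ is realized by $\Lambda_i$. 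Conversely, every germ of $\Lambda_i$ at points of $W_i$ with range in $W_i$ is a germ of $\cG_i$. Effectiveness then identifies the germ groupoid of $\Lambda_i$ over $W_i$ with $\cG_i|W_i$. Since $\phi$ conjugates $\Lambda_1$ to $\Lambda_2$, it induces an isomorphism of germ groupoids, giving $\cG_1|W_1\cong\cG_2|W_2$. Minimality makes $W_i$ full, so $\cG_1$ and $\cG_2$ are Kakutani equivalent.

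For (2), Lemma~\ref{stabilizergerms} identifies $\cG_i{\downarrow}Y_i$ with the groupoid of germs of the action of $\Lambda_i$ on $Y_i$. The hypotheses apply: $\cG_i|Y_i$ is minimal by Proposition~\ref{stabilizerminimality}, and $Y_i$ is infinite. Since $\phi|_{Y_1}:Y_1\to Y_2$ conjugates the restricted actions via $\Phi$, it induces an isomorphism of these germ groupoids. This gives $\cG_1{\downarrow}Y_1\cong\cG_2{\downarrow}Y_2$.

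The main obstacle is the first step, $\phi(Y_1)=Y_2$, which needs a conjugacy-invariant characterization of $Y_i$. The cleanest route is the unique nonempty proper closed invariant set characterization. To use it, I must verify that every $\Lambda_i$-orbit outside $Y_i$ is dense. Given $x\notin Y_i$ and a clopen target set, minimality of $\cG_i$ provides an arrow from $x$ to the target. A three-cycle as in Proposition~\ref{stabilizerminimality} then realizes this arrow by an element of $\sD(\cG_i)$ preserving $Y_i$, using bisections chosen away from $Y_i$, or by first pushing into $\cG_i^{(0)}\setminus Y_i$ near the target, which is possible because $Y_i$ is nowhere dense.
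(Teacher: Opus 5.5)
Your proof is correct. The step $\phi(Y_1)=Y_2$ and part (2) follow the paper, with one small difference in how $Y_i$ is characterized:
\begin{itemize}
\item The paper calls $Y_i$ the \emph{largest} proper closed $\St(\Gamma_i,Y_i)$-invariant set. That needs only density of the orbits off $Y_i$, which comes from Lemma~\ref{threecycle}~(3) on clopen sets disjoint from $Y_i$.
\item Your ``unique nonempty proper'' version also uses minimality of $\St(\Gamma_i,Y_i)$ on $Y_i$. The hypothesis is stated for $\St(\cG_i,Y_i)$, so you should cite Proposition~\ref{stabilizerminimality} for this.
\item Your ``locally full'' heuristic is not needed for the argument.
\end{itemize}

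The genuine difference is in (1).

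The paper restricts $\Phi$ to local subgroups, $(\Gamma_1)_A=\St(\Gamma_1,Y_1)_A\cong\St(\Gamma_2,Y_2)_{\phi(A)}=(\Gamma_2)_{\phi(A)}$. It notes that these lie between $\sD(\cG_i|A_i)$ and $\sF(\cG_i|A_i)$, and then invokes \cite[Theorem 3.9]{Ma15crelle} as a black box.

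You instead keep the single homeomorphism $\phi$ and compare germ groupoids over $W_i$ directly. Effectiveness of $\cG_i$ identifies germs of $\St(\Gamma_i,Y_i)$ with arrows of $\cG_i$, with matching topology. Then \cite[Lemma 3.7]{Ma15crelle} for $\cG_i|W_i$, or the three-cycle argument of Lemma~\ref{stabilizergerms}, shows that every arrow of $\cG_i|W_i$ is a germ of $\sD(\cG_i)_{W_i}\subset\St(\Gamma_i,Y_i)$.

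This is essentially the proof of that cited theorem unpacked, but without a second spatial realization for the local groups. What your route buys:
\begin{itemize}
\item It treats (1) and (2) uniformly, as germ comparisons over the open set $W_i$ and the closed set $Y_i$.
\item It shows that the isomorphism $\cG_1|W_1\cong\cG_2|\phi(W_1)$ is induced by $\phi$ itself.
\item One can check that the same comparison, run over all of $\cG_i^{(0)}$, identifies the germ groupoid of $\St(\Gamma_i,Y_i)$ with $\cS_{Y_i}$. So $\phi$ even induces an isomorphism of the regular reductions $\cG_1|Y_1\cong\cG_2|Y_2$.
\end{itemize}
The paper's route is shorter and needs only an abstract isomorphism of the local subgroups.
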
 

\begin{proof} 
Let $\Phi:\St(\Gamma_1,Y_1)\to\St(\Gamma_2,Y_2)$ be an isomorphism. 
It follows from Theorem~\ref{spatial} that 
there exists a homeomorphism $\phi:\cG_1^{(0)}\to\cG_2^{(0)}$ 
such that $\Phi(\alpha)=\phi\circ\alpha\circ\phi^{-1}$ holds 
for every $\alpha\in\St(\Gamma_1,Y_1)$. 

For each $i=1,2$, the set $Y_i$ is the largest proper closed subset 
invariant under $\St(\Gamma_i,Y_i)$. 
Indeed, for every nonempty clopen $D\subset\cG_i^{(0)}\setminus Y_i$, 
the local subgroup $\sD(\cG_i)_D\subset\St(\Gamma_i,Y_i)$ acts 
minimally on $D$ by Lemma~\ref{threecycle} (3). 
Since $Y_i$ is nowhere dense, 
every orbit outside $Y_i$ is therefore dense in $\cG_i^{(0)}$. 
It follows that $\phi(Y_1)=Y_2$. 

(1)\: 
Pick a nonempty clopen subset $A\subset\cG_1^{(0)}$ 
so that $A\cap Y_1=\emptyset$. 
Since $\supp(\Phi(\alpha))=\phi(\supp(\alpha))$ 
for $\alpha\in\St(\Gamma_1,Y_1)$, we have 
\[ 
(\Gamma_1)_A=\St(\Gamma_1,Y_1)_A 
\cong\St(\Gamma_2,Y_2)_{\phi(A)}=(\Gamma_2)_{\phi(A)}, 
\] 
where the equalities follow from $A\cap Y_1=\emptyset$ and 
$\phi(A)\cap Y_2=\emptyset$. 
For $i=1,2$ the reduction $\cG_i|A_i$, where $A_1:=A$ and 
$A_2:=\phi(A)$, is a minimal ample groupoid 
whose unit space is a Cantor set, 
and $A_i$ is full by minimality of $\cG_i$. 
Under our identification of topological full groups with local subgroups, 
we have 
\[ 
\sD(\cG_i|A_i)\subset(\Gamma_i)_{A_i}\subset\sF(\cG_i|A_i). 
\] 
It therefore follows from \cite[Theorem 3.9]{Ma15crelle} that 
$\cG_1|A$ is isomorphic to $\cG_2|\phi(A)$, 
and so $\cG_1$ is Kakutani equivalent to $\cG_2$. 

(2)\: 
By Proposition~\ref{stabilizerminimality}, each regular reduction 
$\cG_i|Y_i$ is minimal. 
Lemma~\ref{stabilizergerms} therefore identifies 
$\cG_i{\downarrow}Y_i$ with the groupoid of germs 
of the action of $\St(\Gamma_i,Y_i)$ on $Y_i$. 
Since $\phi(Y_1)=Y_2$ and $\Phi$ is implemented by $\phi$, 
the homeomorphism $\phi|Y_1:Y_1\to Y_2$ conjugates these actions. 
It consequently induces an isomorphism of their groupoids of germs, 
and hence an isomorphism 
$\cG_1{\downarrow}Y_1\cong\cG_2{\downarrow}Y_2$. 
\end{proof} 

\begin{corollary}\label{cor:stabilizerinvariantsFD} 
Let $\cG_i$, $Y_i$ be as in Theorem~\ref{thm:stabilizerinvariants}. 
If $\St(\cG_1,Y_1)\cong\St(\cG_2,Y_2)$, or 
if $\St(\cG_1,Y_1)\cap\sD(\cG_1)\cong\St(\cG_2,Y_2)\cap\sD(\cG_2)$, 
then the conclusions of 
Theorem~\ref{thm:stabilizerinvariants} hold. 
\end{corollary}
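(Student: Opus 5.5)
The corollary follows by specializing Theorem~\ref{thm:stabilizerinvariants} to the two extreme choices of $\Gamma_i$ in Setting~\ref{intermediategroup}; there is essentially nothing new to prove. Its hypotheses on $\cG_i$ and $Y_i$ are exactly those of the theorem. Each $\cG_i$ is a minimal ample groupoid with Cantor unit space, and each $Y_i$ is an infinite closed nowhere dense set on which $\St(\cG_i,Y_i)$ acts minimally. In part (2) we also assume \eqref{eq:hausdorffreduction}. The theorem allows any $\Gamma_i$ with $\sD(\cG_i)\subset\Gamma_i\subset\sF(\cG_i)$, and it never requires $\Gamma_1$ and $\Gamma_2$ to be of the same kind.

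For the first hypothesis, I would take $\Gamma_i:=\sF(\cG_i)$ for $i=1,2$. Then $\St(\Gamma_i,Y_i)=\St(\cG_i,Y_i)$ by definition. The assumed isomorphism $\St(\cG_1,Y_1)\cong\St(\cG_2,Y_2)$ is then precisely the hypothesis of Theorem~\ref{thm:stabilizerinvariants}, so conclusions (1) and (2) follow directly.

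For the second hypothesis, I would take $\Gamma_i:=\sD(\cG_i)$, which trivially contains $\sD(\cG_i)$. Here
\[
\St(\Gamma_i,Y_i)=\sD(\cG_i)\cap\St(\cG_i,Y_i)=\St(\cG_i,Y_i)\cap\sD(\cG_i),
\]
so the assumed isomorphism again matches the hypothesis of the theorem. The minimality assumption in the theorem concerns $\St(\cG_i,Y_i)$ and not $\St(\Gamma_i,Y_i)$, so it is unaffected by the choice of $\Gamma_i$.

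The only point that needs checking is this last one: minimality is imposed on the full stabilizer while the isomorphism involves the smaller group. Theorem~\ref{thm:stabilizerinvariants} is stated in exactly that form, and Proposition~\ref{stabilizerminimality} shows that minimality is in any case independent of $\Gamma$. So no step presents a real obstacle.
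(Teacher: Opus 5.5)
Your proposal is correct and is the intended argument. The paper gives no separate proof: the corollary is Theorem~\ref{thm:stabilizerinvariants} specialized to $\Gamma_i=\sF(\cG_i)$ and to $\Gamma_i=\sD(\cG_i)$, in the same way Corollary~\ref{stabilizermaximalFD} specializes Theorem~\ref{stabilizermaximal}. Your identification $\St(\sD(\cG_i),Y_i)=\St(\cG_i,Y_i)\cap\sD(\cG_i)$ and your remark that the minimality hypothesis is placed on $\St(\cG_i,Y_i)$ are exactly what is needed.
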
 

\begin{remark}\label{rem:spatialRubin} 
The usual form of Rubin's spatial realization theorem 
for locally dense actions \cite{Ru89TAMS} does not apply directly 
to the action in Theorem~\ref{spatial}. 
Indeed, put $X:=\cG^{(0)}$ and $\Lambda:=\St(\Gamma,Y)$. 
For $x\in Y$ and a clopen neighborhood $U\subset X$ of $x$, 
the local subgroup $\Lambda_U$ satisfies $\overline{\Lambda_Ux}\subset Y$, 
so this orbit closure has empty interior in $X$. 
Thus the local density hypothesis fails at every point of $Y$. 

The pair $(\Lambda,X)$ is nevertheless locally moving and 
locally flexible in the sense of \cite[Definition 6.8]{NO19IJM}. 
Local movability follows from condition (F1) above. 
As for local flexibility, since $Y$ is nowhere dense, 
every nonempty open subset of $X$ contains a nonempty clopen set $B$ 
with $B\cap Y=\emptyset$, and then $\sD(\cG)_B\subset\Lambda$; 
given nonempty open sets $C_1,C_2\subset B$, 
Lemma~\ref{threecycle} (3) provides 
$\sigma\in\sD(\cG)_B$ with $\sigma(C_1)\cap C_2\neq\emptyset$, 
so that $B$ is flexible. 
By \cite[Theorem 6.11]{NO19IJM}, 
every isomorphism $\St(\Gamma_1,Y_1)\to\St(\Gamma_2,Y_2)$ 
is therefore induced by an isomorphism 
between the Boolean algebras of regular open subsets of $\cG_1^{(0)}$ 
and $\cG_2^{(0)}$. 
This does not by itself produce a homeomorphism, 
and the stronger statements of \cite[Theorem 6.19]{NO19IJM} and 
\cite[Theorem 3.50(a)]{Ru89TAMS} require in addition that 
every clopen set be recognizable in the sense of 
\cite[Definition 6.13]{NO19IJM}, 
equivalently strongly recognizably clopen in the sense of 
\cite[Definition 3.44]{Ru89TAMS}. 
For $\Gamma=\sF(\cG)$ this holds, 
because both restriction to an invariant clopen set and 
exchange of disjoint clopen sets preserve membership in $\St(\cG,Y)$, 
and the Cantor space $X$ satisfies the remaining topological conditions 
defining Rubin's class $K^{\mathrm{TD}}$ 
(see \cite[Proposition 3.49(a)]{Ru89TAMS}); 
thus \cite[Theorem 6.19]{NO19IJM} gives an alternative proof of 
Theorem~\ref{spatial} in that case. 
For a general $\Gamma$ these cut-and-paste operations need not 
preserve $\Gamma$, which is why we use the class F formulation. 
\end{remark} 

\begin{remark}\label{rem:stabilizermeasures} 
In the setting of Theorem~\ref{thm:stabilizerinvariants}, with 
$\cG_1=\cG_2=\cG$, invariant measures also give obstructions 
to conjugacy inside $\sF(\cG)$. 
Indeed, if $\alpha\in\sF(\cG)$ conjugates $\St(\cG,Y_1)$ 
onto $\St(\cG,Y_2)$, the proof of the theorem gives 
$\alpha(Y_1)=Y_2$. 
Since $\alpha$ preserves every $\cG$-invariant probability measure 
$\mu$, we have $\mu(Y_1)=\mu(Y_2)$. 
Thus the function $\mu\mapsto\mu(Y)$ on the invariant probability 
measures is an invariant of such conjugacy. 
For abstract group isomorphisms, an additional argument would be 
needed to identify the ambient invariant measures under the spatial 
implementing homeomorphism. 
We do not pursue these measure-theoretic questions here. 
\end{remark}

\subsection{Realization by effective reductions} 
\label{subsec:effectiverealization} 

We ask which effective ample groupoids can occur 
as reductions to closed nowhere dense subsets of a given groupoid. 
Besides its independent interest, 
this realization problem supplies examples of maximal stabilizers 
through Theorem~\ref{stabilizermaximal}. 
We distinguish effective reductions 
from full restrictions to \'etale subsets. 
Our main construction proceeds in two stages: 
we first realize the UHF groupoid of type $2^\infty$ 
in an arbitrary minimal ample groupoid, 
and then realize any prescribed AF groupoid inside it. 
We begin with the compatibility of reductions 
and the classical realization result for AF groupoids. 

\begin{definition}\label{def:nowheredenserealization} 
Let $\cH$ be an effective ample groupoid with compact unit space. 
We say that $\cH$ is realizable in $\cG$ if there exists 
a closed nowhere dense subset 
$Y\subset\cG^{(0)}$ satisfying \eqref{eq:hausdorffreduction} 
such that 
\[ 
\cH\cong\cG{\downarrow}Y. 
\] 
We say that $\cH$ is strongly realizable in $\cG$ 
if there exists a closed nowhere dense $\cG$-\'etale subset 
$Y\subset\cG^{(0)}$ such that 
\[ 
\cH\cong\cG|Y=r^{-1}(Y)\cap s^{-1}(Y), 
\] 
where the full restriction has the relative topology. 
\end{definition} 

Strong realizability implies realizability, since the full restriction 
in the definition, being isomorphic to $\cH$, is effective and hence 
equals $\cG{\downarrow}Y$. 
The requirement that $Y$ be $\cG$-\'etale alone does not ensure 
effectiveness of the full restriction; 
see Example~\ref{ex:fullrestriction} (2). 

We first record how effective reductions behave under further reduction. 

\begin{proposition}\label{prop:iteratedreduction} 
Let $Y,Z\subset\cG^{(0)}$ be closed subsets with $Z\subset Y$, 
and assume that both satisfy \eqref{eq:hausdorffreduction}. 
Then $Z$ satisfies \eqref{eq:hausdorffreduction} with respect to 
$\cG{\downarrow}Y$, and there is a canonical identification 
\[ 
(\cG{\downarrow}Y){\downarrow}Z\subset\cG{\downarrow}Z 
\] 
with a wide open subgroupoid, inducing the identity on $Z$. 
If $Y$ is $\cG$-\'etale, this identification is an isomorphism. 
\end{proposition}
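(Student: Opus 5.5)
We work with germs throughout. Put $\cH:=\cG|Y$, $\pi_Y:\cH\to\cG{\downarrow}Y=\Eff(\cH)$, and $\pi_Z:\cG|Z\to\cG{\downarrow}Z$. By Definition~\ref{def:effectivequotient}, an arrow of $\cG{\downarrow}Y$ is the germ at a point $y\in Y$ of a local homeomorphism $\theta_U|_{Y\cap s(U)}$, where $U\in\cCO(\cG)$ is a bisection with $YU=UY$. An arrow of $\cG{\downarrow}Z$ is the germ at $z\in Z$ of $\theta_V|_{Z\cap s(V)}$, where $VZ=ZV$. The plan is to define a map
\[
\Psi:(\cG{\downarrow}Y)|Z\longrightarrow\cG{\downarrow}Z
\]
by restricting germs from $Y$ to $Z$. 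We then show that $\Psi$ kills exactly the interior isotropy, so that it descends to an injective open homomorphism on $(\cG{\downarrow}Y){\downarrow}Z$.

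First I will compute $(\cG{\downarrow}Y)|Z$. Take a compact open bisection $B$ of $\cG{\downarrow}Y$. It lifts through $\pi_Y$ to a bisection $U\cap\cH$ with $YU=UY$, and the condition $ZB=BZ$ says that, for $h\in U$ with $s(h)\in Y$, we have $r(h)\in Z$ if and only if $s(h)\in Z$. Shrinking $U$ to $A\cap U\cap A'$ with clopen $A,A'$ close to $Y$ (as in the proof of Proposition~\ref{structureofSt}) does not obviously give $ZU=UZ$ in $\cG$, since points outside $Y$ could still violate it. However, $U$ already satisfies $YU=UY$: if $h\in U$ has $r(h)\in Z\subset Y$, then $s(h)\in Y$, and then $s(h)\in Z$ by the condition on $B$. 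Hence $ZU=UZ$ holds automatically, and $ZUZ$ defines an arrow of $\cG|Z$. The map $\Psi$ sends the germ of $B$ at $z$ to $\pi_Z$ of the germ of $ZUZ$ at $z$. It is well defined because equal germs on $Y$ restrict to equal germs on $Z$. It is a homomorphism, it is the identity on $Z$, and it maps basic bisections onto basic bisections, so it is continuous and open.

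Next comes the Hausdorff condition \eqref{eq:hausdorffreduction} for $Z$ relative to $\cG{\downarrow}Y$, and injectivity. An arrow of $(\cG{\downarrow}Y)|Z$ lies in the interior of the isotropy exactly when the corresponding local homeomorphism of $Z$ is the identity near $z$, that is, exactly when its $\Psi$-image is a unit of $\cG{\downarrow}Z$. So $\Iso((\cG{\downarrow}Y)|Z)^\circ=\Psi^{-1}(Z)$. This set is closed, because $Z$ is closed in the Hausdorff space $\cG{\downarrow}Z$ (Hausdorffness of $\cG{\downarrow}Z$ is where the hypothesis \eqref{eq:hausdorffreduction} on $Z$ is used) and $\Psi$ is continuous. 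Thus \eqref{eq:hausdorffreduction} holds for $Z$. The induced map on $\Eff$ is then injective and open, which identifies $(\cG{\downarrow}Y){\downarrow}Z$ with an open subgroupoid of $\cG{\downarrow}Z$. This subgroupoid is wide because $\Psi$ is the identity on $Z$.

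Finally, suppose $Y$ is $\cG$-\'etale; we need surjectivity. Take a bisection $V$ with $ZV=VZ$ and $z\in Z\cap s(V)$, and put $g:=Vz$. Then $r(g),s(g)\in Z\subset Y$, so $g$ is regular for $Y$ by \eqref{eq:etalesubsetregularity}. Choose a bisection $U\ni g$ with $YU=UY$. Since $U$ and $V$ are both open bisections containing $g$, the set $U\cap V$ is again a compact open neighbourhood of $g$, shrinking if needed. It satisfies both $YU=UY$ and $ZV=VZ$, because these conditions pass to sub-bisections. Its class in $\cG{\downarrow}Y$ restricts to the germ of $V$ at $z$, which gives surjectivity. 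I expect the main obstacle to be the earlier bookkeeping showing that $B$ preserving $Z$ inside $\cG{\downarrow}Y$ lifts to a $\cG$-bisection preserving $Z$. As argued above, this rests on using $YU=UY$ before testing membership in $Z$.
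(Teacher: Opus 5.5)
Your proof is correct and is essentially the paper's argument. Both rest on the fact that a $Y$-preserving bisection of $\cG$ whose restriction to $Y$ preserves $Z$ already preserves $Z$ (since $Z\subset Y$), and both obtain closedness of the interior isotropy from Hausdorffness of $\cG{\downarrow}Z$. Both also treat the \'etale case by intersecting a $Y$-preserving and a $Z$-preserving bisection through the same arrow. The difference is bookkeeping. The paper first embeds the non-effective groupoid $(\cG|Y)|Z$ as a wide open subgroupoid of $\cG|Z$ and then passes to effective quotients. You instead work directly with germs on $(\cG{\downarrow}Y)|Z$, which makes the identity $\Iso((\cG{\downarrow}Y)|Z)^\circ=\Psi^{-1}(Z)$ explicit.
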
 

\begin{proof} 
Put $\cK:=(\cG|Y)|Z$, using regular reductions at both steps. 
The sets $UZ$, where $U$ is a compact open bisection of $\cG$ 
satisfying $YU=UY$ and $ZU=UZ$, form a basis for $\cK$. 
Indeed, a bisection of $\cG|Y$ is locally the restriction of 
a compact open bisection $U$ of $\cG$ preserving $Y$. 
If its restriction locally preserves $Z$, we may shrink $U$ 
so that $ZU=UZ$ as well, since $Z\subset Y$. 
Each such $UZ$ is also an open bisection of $\cG|Z$. 
Thus $\cK$ is a wide open subgroupoid of $\cG|Z$. 

The quotient map $\cG|Y\to\cG{\downarrow}Y$ is the identity 
on $Y$ and maps open bisections homeomorphically onto open 
bisections. 
Consequently $\cK$ and $(\cG{\downarrow}Y)|Z$ induce the same 
germs of local homeomorphisms on $Z$, giving a canonical isomorphism 
$\Eff(\cK)\cong\Eff\left((\cG{\downarrow}Y)|Z\right)$. 
Since $\cK$ is open in $\cG|Z$, we have 
$\Iso(\cK)^\circ=\cK\cap\Iso(\cG|Z)^\circ$. 
It follows that the inclusion $\cK\subset\cG|Z$ descends to 
an open injective homomorphism 
$\Eff(\cK)\to\Eff(\cG|Z)=\cG{\downarrow}Z$, 
which is the identity on $Z$. 
This proves both the Hausdorffness assertion and the claimed 
identification with a wide open subgroupoid. 

Suppose now that $Y$ is $\cG$-\'etale. 
By \eqref{eq:etalesubsetregularity}, every arrow of $\cG$ 
with source and range in $Y$ is regular for $Y$. 
Given $g\in\cG|Z$, choose bisections $U,V\in\cCO(\cG)$ 
containing $g$ such that $YU=UY$ and $ZV=VZ$. 
Their intersection preserves both $Y$ and $Z$, so $g\in\cK$. 
Thus $\cK=\cG|Z$, and the induced map of effective quotients 
is an isomorphism. 
\end{proof} 

The full-restriction version of the \'etale case of 
Proposition~\ref{prop:iteratedreduction} gives transitivity of 
strong realizability. 
Indeed, if $Y$ is a closed $\cG$-\'etale subset and 
$Z\subset Y$ is a closed $(\cG|Y)$-\'etale subset, then 
\[ 
(\cG|Y)|Z=r^{-1}(Z)\cap s^{-1}(Z)=\cG|Z 
\] 
with the relative topology, so $Z$ is $\cG$-\'etale. 
If $Y$ is nowhere dense in $\cG^{(0)}$, so is $Z$. 
Applying this observation to two strong realizations proves the claim. 

\begin{proposition}\label{prop:AFregularreduction} 
Let $\cG$ be an AF groupoid with compact unit space $X$, 
and let $Y\subset X$ be closed. 
Then $\cG{\downarrow}Y=\cG|Y$ is an AF groupoid. 
\end{proposition}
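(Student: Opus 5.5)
The proof has two parts: first show that $\cG|Y$ is principal, so that the two reductions coincide, and then exhibit it as an increasing union of elementary subgroupoids obtained by cutting down those of $\cG$.

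Principality is immediate. Since $\cG$ is AF, it is principal, so $\Iso(\cG)=X$. Hence $\Iso(\cG|Y)=Y$, which is open and closed in $\cG|Y$. Condition \eqref{eq:hausdorffreduction} therefore holds, $\Iso(\cG|Y)^\circ=Y$, and $\Eff(\cG|Y)=\cG|Y$. It remains to show that $\cG|Y$ is AF.

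Write $\cG=\bigcup_n\cK_n$ with $\cK_1\subset\cK_2\subset\cdots$ elementary subgroupoids, each compact open and principal with unit space $X$. I would set
\[
\cL_n:=\cK_n\cap\cG|Y.
\]
Each $\cL_n$ is open in $\cG|Y$, is a subgroupoid, and satisfies $\cL_n^{(0)}=Y$. It is principal, and the union $\bigcup_n\cL_n$ equals $\cG|Y$.

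The main obstacle is that $\cL_n$ need not be compact. The set $\cK_n\cap r^{-1}(Y)\cap s^{-1}(Y)$ is compact, but regularity for $\cG|Y$ is an open condition, not a closed one. So the naive $\cL_n$ is only an increasing union of open subgroupoids, and these must be replaced by compact ones.

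To handle this, I would use the structure of elementary groupoids. Each $\cK_n$ is isomorphic to a finite disjoint union of groupoids $A_k\times\{1,\dots,m_k\}^2$ over clopen bases $A_k$, with matrix units given by compact open bisections $E^{(k)}_{ij}$. An arrow $g\in E^{(k)}_{ij}$ with endpoints in $Y$ is regular exactly when, in a neighborhood of $g$ inside $E^{(k)}_{ij}$, membership in $Y$ of the range and source agree. This is a local condition on $A_k$. For each pair $(i,j)$ define
\[
O^{(k)}_{ij}:=\{a\in A_k\mid Y\text{ near the }i\text{th and }j\text{th copies of }a\text{ coincide locally}\}.
\]
This set is open in $A_k$. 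The points of $A_k$ whose $i$th copy lies in $Y$ form a closed set $Y_i$, and "coincide locally near $a$" means that $a$ lies in the interior of the set where $Y_i$ and $Y_j$ agree. The arrows of $\cL_n$ lying over $a$ form an equivalence relation on the indices $i$ with $a\in Y_i$. The groupoid $\cL_n$ is then a union of compact open pieces: exhausting each $O^{(k)}_{ij}$ by clopen sets and taking finite unions gives compact open sub-bisections.

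The plan is then to choose, by a diagonal procedure in $n$, compact open subgroupoids $\cM_n\subset\cL_n$ with unit space $Y$, with $\cM_n\subset\cM_{n+1}$, and with $\bigcup_n\cM_n=\cG|Y$. This is feasible for two reasons. First, $\cG|Y$ is second countable and is covered by countably many compact open bisections of the form $U\cap\cG|Y$, each lying in some $\cL_n$. Second, the subgroupoid of a principal elementary groupoid generated by finitely many compact open bisections together with the units is again compact open. Indeed, the generated subgroupoid is contained in the compact set $\cK_n\cap r^{-1}(Y)\cap s^{-1}(Y)$ and is a finite union of finite products of the generators. Finiteness holds because the fibres of $\cK_n$ have bounded size, so words of bounded length suffice. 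Each $\cM_n$ is then elementary, being compact open, principal, and having unit space $Y$, and $\cG|Y=\bigcup_n\cM_n$ shows that $\cG|Y$ is AF.

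The step I expect to require the most care is the finiteness of the generated subgroupoid, together with the check that it is compact open in $\cG|Y$.
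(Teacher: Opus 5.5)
Your proposal is correct and is essentially the paper's argument. The paper likewise covers $\cG|Y$ by compact open pieces, places each finite stage inside a single elementary $\cK_n$, and uses the bounded orbit size of $\cK_n$ to show that the generated subgroupoid is a finite union of products of bounded length, hence compact open, principal and elementary in $\cG|Y$; your matrix-unit description and the sets $O^{(k)}_{ij}$ are never used in that final step and can be omitted.
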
 

\begin{proof} 
Since $\cG$ is principal, so is $\cG|Y$, and hence 
$\cG{\downarrow}Y=\cG|Y$. 
Choose an increasing sequence of elementary subgroupoids 
$\cG_n$ with union $\cG$. 
The ample groupoid $\cG|Y$ admits an increasing compact open 
exhaustion $(K_m)$ with $Y\subset K_m=K_m^{-1}$. 
For each $m$, compactness gives $n$ such that $K_m\subset\cG_n$. 
Since $\cG_n$ is compact and \'etale, its orbits have cardinality 
bounded by some integer $N$. 
Every arrow in the subgroupoid generated by $K_m$ is a product 
along a path in one of these orbits. 
Principality allows repeated vertices to be removed from this path, 
so at most $N-1$ factors suffice. 
Thus the generated subgroupoid is a finite union of compact open 
sets $K_m^j$ (including $K_m^0=Y$), and is elementary in $\cG|Y$. 
These subgroupoids increase with $m$ and exhaust $\cG|Y$, 
which is therefore AF. 
\end{proof} 

We next record a classical strong realization result 
from the Bratteli diagram description of AF groupoids. 
Following \cite[Definition 4.11]{GPS04ETDS}, we say that a closed 
subset $Y\subset\cG^{(0)}$ is $\cG$-thin if $\mu(Y)=0$ 
for every $\cG$-invariant probability measure $\mu$. 

\begin{proposition}\label{prop:AFrealization} 
Let $\cG$ be a minimal AF groupoid with Cantor unit space, 
and let $\cH$ be an AF groupoid with compact unit space. 
Then $\cH$ is strongly realizable in $\cG$: 
there exists a closed nowhere dense $\cG$-\'etale subset 
$Y\subset\cG^{(0)}$ such that 
\[ 
\cH\cong\cG{\downarrow}Y=\cG|Y=r^{-1}(Y)\cap s^{-1}(Y). 
\] 
Moreover, $Y$ can be chosen $\cG$-thin. 
\end{proposition}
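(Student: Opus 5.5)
We work with Bratteli diagrams throughout. By the standard Bratteli diagram description of AF groupoids (as in \cite[Lemma 3.1]{Ma08JMSJ}), write $\cG$ as the tail equivalence groupoid of a simple Bratteli diagram $(V,E)$, so that $\cG^{(0)}=X_{(V,E)}$ is the infinite path space. Likewise write $\cH$ as the tail equivalence groupoid of a Bratteli diagram $(W,F)$, not necessarily simple, with $\cH^{(0)}$ its path space. We may telescope $(V,E)$ freely, since this does not change $\cG$. The plan is to embed $(W,F)$ into a telescoping of $(V,E)$ as a subdiagram, level by level. Each vertex $w\in W_n$ goes to a vertex $v\in V_{k_n}$, and each edge $f\colon w\to w'$ goes to a chosen finite path from $v$ to $v'$ in $(V,E)$. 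The chosen paths must be pairwise distinct, and they must avoid at least one path between the same pair of levels. Then $Y$ is the closure of the set of infinite paths built from chosen paths.

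The key steps, in order:

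\textbf{Step 1 (room).} Simplicity of $(V,E)$ together with $\cG^{(0)}$ being a Cantor set lets us telescope so that, between consecutive chosen levels, every vertex connects to every vertex by more than twice as many paths as needed. This gives room for injectivity. It also leaves at least one unused path into each target vertex.

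\textbf{Step 2 (identification of orbits).} The map from paths of $(W,F)$ to paths of $(V,E)$ is then a homeomorphism onto its image $Y$. Two points of $Y$ are $\cG$-tail equivalent if and only if their images eventually agree as edge sequences. Because the encoding is injective on each level block, this happens if and only if the $(W,F)$-paths are eventually equal. So $r^{-1}(Y)\cap s^{-1}(Y)$ equals the tail equivalence on $Y$ as a set.

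\textbf{Step 3 (topology and \'etaleness).} A basic bisection of $\cG$ swaps two prefixes of equal length. Restricted to $Y$, we take prefixes ending at a chosen level $k_n$ which are concatenations of chosen paths. Such a swap preserves $Y$ locally, since membership in $Y$ depends only on the tail after level $k_n$ and the vertex reached there. Hence every arrow of the full restriction is regular, and the relative topology agrees with the AF topology of $\cH$. This gives $\cH\cong\cG|Y=r^{-1}(Y)\cap s^{-1}(Y)$, and $Y$ is $\cG$-\'etale by \eqref{eq:etalesubsetregularity}. The subtle point in this step is arrows whose common prefix length falls between chosen levels. We handle it by noting that tail equivalence can always be witnessed at a later chosen level.

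\textbf{Step 4 (nowhere density and thinness).} Any cylinder meeting $Y$ can be extended by an unused path, which gives a subcylinder disjoint from $Y$, so $Y$ is nowhere dense. For thinness, arrange that at the $n$-th block the chosen paths form a fraction at most $2^{-n}$ of all paths into each vertex. An invariant measure gives equal mass to cylinders of equal length ending at the same vertex. Hence the measure of the level-$k_n$ approximant of $Y$ is at most $\prod_{j\le n}2^{-j}\to0$.

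We expect Step 3 to be the main obstacle: the careful bookkeeping needed to show regularity of every arrow and to match the topologies. Step 1 is routine counting using simplicity. The edge case where $\cH^{(0)}$ is finite, which has no infinite diagram issue, is handled in the same way with constant multiplicities.
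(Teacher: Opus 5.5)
Your argument is correct, but it proves by hand what the paper obtains by citation. The paper applies \cite[Lemma 3.1]{Ma08JMSJ} with empty avoided set. That lemma directly yields a closed $\cG$-\'etale $Y$ which is $\cG$-thin and whose full restriction, with the relative topology, is isomorphic to $\cH$. Nowhere density is then deduced from thinness, since invariant probability measures on a minimal AF groupoid exist and have full support. Finally, $\cG{\downarrow}Y=\cG|Y$ follows from \eqref{eq:etalesubsetregularity} and principality; you should record this step too, since your Step~3 stops at $\cG|Y$.

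You use that lemma only for the Bratteli description, and then reconstruct its content via a subdiagram embedding into a telescoping of the simple diagram. This makes the argument self-contained, shows the \'etale structure explicitly through prefix swaps at chosen levels, and gives a measure-free proof of nowhere density. The citation is shorter, and the same lemma also allows avoiding a prescribed saturation, which the paper uses in Proposition~\ref{prop:ZAFrealization}.

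Two details need more precision.

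In Step~3, the vertex maps need not be injective (e.g.\ when $(V,E)$ has one vertex per level), so ``the vertex reached there'' must mean the vertex of $(W,F)$. For an arrow of the full restriction, take $n$ so large that both $(W,F)$-paths pass through the same $w\in W_n$. The swap of the two encoded prefixes then preserves $Y$, because a point with such a prefix lies in $Y$ exactly when its tail lies in the continuation set of $w$.

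In Step~4, state the counting per source $w\in W_n$ and target $v'\in V_{k_{n+1}}$: the number of edges out of $w$ whose endpoint is sent to $v'$ should be at most $2^{-n}$ times the number of paths from the image of $w$ to $v'$. This follows from your uniform path-count bound in Step~1. It yields $\mu(Y_{n+1}\cap C)\le 2^{-n}\mu(C)$ for every encoded level-$k_n$ cylinder $C$, where $Y_{n+1}$ is the level-$k_{n+1}$ approximant.
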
 

\begin{proof} 
Apply \cite[Lemma 3.1]{Ma08JMSJ}, 
taking the closed set whose saturation is to be avoided to be empty. 
It gives a closed $\cG$-\'etale subset $Y\subset\cG^{(0)}$ 
and an isomorphism from $\cH$ onto the full restriction of $\cG$ 
to $Y$, with its relative topology, such that $\mu(Y)=0$ 
for every $\cG$-invariant probability measure $\mu$. 
Such measures exist and have full support 
because $\cG$ is minimal and AF, so $Y$ is nowhere dense. 
By \eqref{eq:etalesubsetregularity}, the full restriction is $\cG|Y$. 
It is principal, and hence also equals $\cG{\downarrow}Y$. 
\end{proof} 

Together with Proposition~\ref{prop:AFregularreduction}, 
this shows that, for a minimal AF groupoid $\cG$ with Cantor unit space 
and an effective ample groupoid $\cH$ with compact unit space, 
the following conditions are equivalent: $\cH$ is AF; 
$\cH$ is realizable in $\cG$; 
$\cH$ is strongly realizable in $\cG$. 

For a fixed minimal AF groupoid, thin strong realizations 
of a given AF groupoid are unique up to ambient automorphisms. 
When the restrictions are minimal, their dimension groups, 
with the order units, also classify the resulting stabilizers. 

\begin{corollary}\label{cor:thinAFrealization} 
Let $\cG$ be a minimal AF groupoid with Cantor unit space, 
and let $Y_1,Y_2\subset\cG^{(0)}$ be closed, $\cG$-thin, 
$\cG$-\'etale Cantor sets. 
\begin{enumerate} 
\item Every isomorphism $\cG|Y_1\to\cG|Y_2$ extends to 
an automorphism of $\cG$. 
\item Suppose that $\cG|Y_1$ and $\cG|Y_2$ are minimal, 
and let $\Gamma$ be either $\sF(\cG)$ or $\sD(\cG)$. 
Then $\St(\Gamma,Y_1)$ and $\St(\Gamma,Y_2)$ are isomorphic 
if and only if there is an order isomorphism 
\[ 
H_0(\cG|Y_1)\longrightarrow H_0(\cG|Y_2) 
\] 
sending the order unit $[1_{Y_1}]$ to $[1_{Y_2}]$. 
\end{enumerate} 
\end{corollary}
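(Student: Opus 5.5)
For (1), I would rely on the uniqueness part of the AF realization lemma \cite[Lemma 3.1]{Ma08JMSJ}, or more precisely on its relative version for thin \'etale subsets in the style of Giordano--Putnam--Skau \cite[Theorem 4.15]{GPS04ETDS} and \cite{GMPS08ETDS}. Write $X=\cG^{(0)}$. Fix an isomorphism $\psi:\cG|Y_1\to\cG|Y_2$. Because $\cG$ is minimal AF and $Y_i$ is thin, closed and $\cG$-\'etale, the complements $X\setminus Y_i$ are open and full. Moreover $\cG|(X\setminus Y_i)$ is an AF groupoid whose dimension group agrees with $H_0(\cG)$ away from the infinitesimal contribution. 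The plan has three steps. First, build Bratteli diagram presentations of $\cG$ adapted to $Y_1$ and to $Y_2$, meaning that each elementary subgroupoid $\cK_n$ restricts on $Y_i$ to an elementary subgroupoid of $\cG|Y_i$. Second, use $\psi$ to intertwine the $Y$-parts level by level. Third, use thinness, which makes the $Y_i$-towers negligible in every invariant measure, so that the induced maps on $H_0(\cG)$ coincide. Then perform an Elliott-type intertwining of the complementary parts. The outcome is a homeomorphism $X\to X$ that extends $\psi|Y_1$ and preserves $\cG$. I expect this extension step to be the main obstacle. The difficulty is to absorb the $Y$-part of each tower into the complementary part without altering the class in $H_0(\cG)$. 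My first attempt would be to cite the absorption theorem of \cite{GPS04ETDS,GMPS08ETDS}, which works in the principal case (here $\cG$ is an equivalence relation). Its hypotheses are exactly closedness, thinness and \'etaleness.

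For (2), the ``if'' direction goes as follows. By Elliott's classification of AF groupoids through dimension groups \cite[Theorem 4.10]{Ma12PLMS}, an order isomorphism $H_0(\cG|Y_1)\to H_0(\cG|Y_2)$ that preserves the units yields an isomorphism $\cG|Y_1\cong\cG|Y_2$. Both groupoids are AF by Proposition~\ref{prop:AFregularreduction}, and $Y_i$ is compact. Part (1) extends this isomorphism to an automorphism $\Theta$ of $\cG$ with $\theta(Y_1)=Y_2$, where $\theta:=\Theta|X$. Conjugation by $\theta$ maps $\sF(\cG)$ onto itself and $\sD(\cG)$ onto itself, and it carries $\St(\Gamma,Y_1)$ onto $\St(\Gamma,Y_2)$.

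For the ``only if'' direction, I would apply Theorem~\ref{thm:stabilizerinvariants} (2). Its hypotheses hold for the following reasons. The sets $Y_i$ are Cantor, hence infinite. They are nowhere dense because they are thin and invariant measures have full support. The stabilizers act minimally by Proposition~\ref{stabilizerminimality}. Condition \eqref{eq:hausdorffreduction} is automatic since $\cG$ is principal. The theorem gives $\cG{\downarrow}Y_1\cong\cG{\downarrow}Y_2$. Here $\cG{\downarrow}Y_i=\cG|Y_i$, so we get $\cG|Y_1\cong\cG|Y_2$. Any isomorphism of groupoids induces an isomorphism of $H_0$ that preserves the positive cone, generated by the classes $[1_A]$, and sends $[1_{Y_1}]$ to $[1_{Y_2}]$. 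This proves (2) once (1) is established.
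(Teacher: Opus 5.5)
Your proposal is correct and follows the paper's proof. Part (1) is exactly the extension result \cite[Lemma 4.15]{GPS04ETDS}, which the paper simply cites, so neither your intertwining sketch nor the appeal to the absorption theorem (whose conclusion is different) is needed. Part (2) uses Theorem~\ref{thm:stabilizerinvariants}~(2) for the forward direction, and for the converse uses AF-ness of $\cG|Y_i$, part (1), and conjugation by the induced homeomorphism, just as you do.

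One citation should be adjusted. \cite[Theorem 4.10]{Ma12PLMS} only identifies $H_0(\cG|Y_i)$, with order unit $[1_{Y_i}]$, as the dimension group. The step from a unital order isomorphism to a groupoid isomorphism $\cG|Y_1\cong\cG|Y_2$ is not Elliott's $C^*$-algebraic theorem; it is the classification of AF relations by their dimension groups, which the paper takes from \cite[Lemma 4.13]{GPS04ETDS}.
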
 

\begin{proof} 
Part (1) is \cite[Lemma 4.15]{GPS04ETDS}. 
For (2), the sets $Y_i$ are nowhere dense by thinness, since 
$\cG$-invariant probability measures exist and have full support. 
Proposition~\ref{stabilizerminimality} gives minimality of the 
stabilizer actions on $Y_i$. 
If the stabilizers are isomorphic, 
Theorem~\ref{thm:stabilizerinvariants} (2) gives an isomorphism 
$\cG|Y_1\cong\cG|Y_2$, which induces the required order isomorphism. 
Conversely, the groupoids $\cG|Y_i$ are AF by 
Proposition~\ref{prop:AFregularreduction}, and their ordered $H_0$ 
groups with order units are their dimension groups 
\cite[Theorem 4.10]{Ma12PLMS}. 
The given order isomorphism therefore yields an isomorphism 
$\cG|Y_1\cong\cG|Y_2$ by \cite[Lemma 4.13]{GPS04ETDS}. 
By (1), it extends to an automorphism of $\cG$. 
The induced homeomorphism of $\cG^{(0)}$ normalizes both 
$\sF(\cG)$ and $\sD(\cG)$ and sends $Y_1$ onto $Y_2$, 
so it conjugates the two stabilizers. 
\end{proof} 

We now extend this strong realization result to 
arbitrary minimal ample groupoids. 
Let $\mathcal R_{2^\infty}$ denote the UHF groupoid of type $2^\infty$, 
namely the tail equivalence relation on $\{0,1\}^{\N}$ 
with its usual AF topology. 
The construction below uses a binary family of bisections to produce 
the finite-stage tail equivalence relations. 
By shrinking their common source along a compact exhaustion of 
$\cG$, we exclude all additional arrows from the full restriction 
to the limiting Cantor set. 

\begin{proposition}\label{prop:UHFrealization} 
Let $\cG$ be a minimal effective ample groupoid 
with Cantor unit space $X$. 
Then $\mathcal R_{2^\infty}$ is strongly realizable in $\cG$: 
there exists a nowhere dense $\cG$-\'etale Cantor set $Y\subset X$ 
such that $\cG|Y=r^{-1}(Y)\cap s^{-1}(Y)\cong\mathcal R_{2^\infty}$, 
where the full restriction carries the relative topology from $\cG$. 
\end{proposition}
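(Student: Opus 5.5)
The plan is an inductive nested construction. We build decreasing sequences of compact open bisections encoding binary words and shrink them along a compact exhaustion of $\cG$. Fix an increasing exhaustion $C_1\subset C_2\subset\cdots$ of $\cG$ by compact open sets with $\bigcup_n C_n=\cG$. At stage $n$ we construct:
\begin{itemize}
\item a nonempty clopen set $E_n\subset X$;
\item compact open bisections $U_w$ for $w\in\{0,1\}^n$, with $s(U_w)=E_n$ and $U_{0^n}=E_n$, such that the ranges $F_w:=r(U_w)$ are mutually disjoint.
\end{itemize}
Put $A_n:=\bigsqcup_{w}F_w$. Passing to stage $n+1$ goes as follows. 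Minimality and the absence of isolated points give an arrow from a point of $E_n$ to a different point of $E_n$, and hence a compact open bisection $Z_{n+1}$ with $s(Z_{n+1})=:E_{n+1}\subset E_n$, $r(Z_{n+1})\subset E_n$, and $E_{n+1}\cap r(Z_{n+1})=\emptyset$. We then set
\[
U_{w0}:=U_wE_{n+1},\qquad U_{w1}:=U_wZ_{n+1}.
\]
Then $A_{n+1}\subset A_n$, and $Y:=\bigcap_n A_n$ is homeomorphic to $\{0,1\}^{\N}$ via the address map. The arrows $U_wU_v^{-1}$ restricted to $Y$, taken at the level where the tails agree, generate a copy of $\mathcal R_{2^\infty}$. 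At each step we also shrink $E_{n+1}$ strongly enough to guarantee three things.

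\emph{Nowhere density.} Leave a nonempty clopen piece of $E_n$ outside $s(Z_{n+1})\cup r(Z_{n+1})$. Then every cylinder $F_w$ contains a clopen set missing $A_{n+1}$, and hence missing $Y$.

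\emph{Diameters.} Take the diameters of $E_n$ to tend to $0$. This makes $Y$ a Cantor set and the address map a homeomorphism.

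\emph{Exclusion of extra arrows (the key condition).} Every arrow of $C_n$ with both endpoints in $A_{n+1}$ must be one of the constructed ones. Concretely, we require
\begin{equation*}
C_n\cap r^{-1}(A_{n+1})\cap s^{-1}(A_{n+1})\subset\bigcup_{w,v\in\{0,1\}^{n+1}}U_wU_v^{-1}.
\end{equation*}

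The main obstacle is arranging this exclusion. The approach I would try first uses that $C_n$ is compact and a finite union of bisections $B_1,\dots,B_k$, together with effectiveness. Consider an arrow $g\in B_j$ that runs from $F_v$ to $F_w$ but is not germinally equal to $U_wU_v^{-1}$. Transport everything to $E_n$ by conjugation: it suffices to control finitely many compact open bisections $B'\subset \cG|E_n$. These are the sets $U_w^{-1}B_jU_v$, minus the unit space, minus the part $U_w^{-1}U_wU_v^{-1}U_v$ corresponding to constructed arrows. We must choose $E_{n+1}$ and $Z_{n+1}$ so that no arrow of such a $B'$ joins points of $E_{n+1}\sqcup r(Z_{n+1})$, except the arrows $Z_{n+1}^{\pm1}$ and units.

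Each $B'$ is a compact open bisection, and $B'\setminus\cG^{(0)}$ is clopen because $\cG$ is Hausdorff. By effectiveness, the open set of points moved by $\theta_{B'\setminus \cG^{(0)}}$ is dense in its domain. So near a chosen point $x$ we can first pick $x$ outside the closed nowhere dense fixed-point boundaries of the finitely many $B'$. Then we pick small neighborhoods so that $\theta_{B'}$ carries the chosen small sets off themselves and off each other. Some arrows of $B'$ may coincide with $Z_{n+1}$-type arrows on an open set; we handle these by deleting the part of $B'$ agreeing germinally with $Z_{n+1}$ or $Z_{n+1}^{-1}$, and the remainder is again clopen. For the finitely many constraints this is a finite intersection of open conditions around well-chosen points $x$ and $\theta_{Z}(x)$, so it is achievable.

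Finally we verify the conclusion. Let $g\in r^{-1}(Y)\cap s^{-1}(Y)$. Then $g\in C_n$ for some $n$, and the exclusion gives $g\in U_wU_v^{-1}$ for some words of length $n+1$. Hence the full restriction is exactly the union of the sets $Y\cap U_wU_v^{-1}\cap Y$, each of which maps homeomorphically to a basic open set of $\mathcal R_{2^\infty}$. This makes the full restriction étale, so $Y$ is $\cG$-étale, and gives the isomorphism with the relative topology.
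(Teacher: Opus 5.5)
Your architecture is the paper's: binary families of bisections with a common source, doubling by $U_{w0}=U_wE_{n+1}$ and $U_{w1}=U_wZ_{n+1}$, a compact exhaustion with a control inclusion at each stage, omitted clopen pieces for nowhere density, and the final identification of $r^{-1}(Y)\cap s^{-1}(Y)$ with an increasing union of finite-stage relations. The gap is in the exclusion step, which is the heart of the proof.

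Choosing $x$ off the fixed-point sets of the finitely many $B'$, and deleting the parts of $B'$ that agree germinally with $Z_{n+1}^{\pm1}$, misses some arrows. It does not control an arrow $b\in B'$ with $s(b)=x$ and $r(b)=y:=\theta_{Z_{n+1}}(x)$ but $b\neq z$, where $z\in Z_{n+1}$ is the arrow with source $x$. The same goes for arrows of $B'$ from $y$ to $x$ other than $z^{-1}$, and for non-unit arrows of $B'$ from $y$ to $y$. Such a $b$ occurs whenever $\cG_x$ is nontrivial and some $B'$ contains $z\gamma$ with $\gamma\in\cG_x\setminus\{x\}$. Your genericity condition on $x$ does not exclude this, because $\theta_{B'}$ does move $x$.

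In an effective groupoid, two distinct arrows with the same source and range have distinct germs, so $b$ is not deleted. Moreover, $b$ itself joins $E_{n+1}$ to $r(Z_{n+1})$ however far you shrink $E_{n+1}$. Transported back, $u_wbu_v^{-1}\in C_n$ runs from $F_{v0}$ to $F_{w1}$ without lying in $U_{w1}U_{v0}^{-1}$. So the required inclusion fails for every choice of neighbourhoods. This is not an open condition at $x$ and $y$; it is a constraint on the choice of the arrow $z$, which your sketch never imposes.

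There are two ways to fix this. First, you can choose $y$ in the dense orbit of $x$ inside $E_n$ so that it avoids the nowhere dense fixed-point sets and the finitely many points $\theta_{B'}(x)$ and $\theta_{B'}^{-1}(x)$. Then every arrow of $\bigcup B'$ between points of $\{x,y\}$ is a unit, and compactness gives the inclusion.

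Second, as the paper does, you can fix once and for all a point $x$ with $\cG_x=\{x\}$ and keep it in every common source. Such a point exists because $\cG$ is effective and second countable. Between any two of the points $\theta_{U_w}(x)$ there is then exactly one arrow, namely the constructed one. Hence the compact set $C_n\setminus\cK_{n+1}$ contains no arrow between them, and the control inclusion follows from compactness with no case analysis and no re-choosing of points.

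A smaller point concerns diameters. Making the diameters of $E_n$ tend to $0$ does not make the address map injective, since compositions of the maps $\theta_{Z_k}$ can expand. You need the diameters of all the ranges $F_w$, $\lvert w\rvert=n$, to tend to $0$, which the same shrinking of $E_{n+1}$ provides.
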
 

\begin{proof} 
Since $\cG$ is second countable and effective, 
choose $x\in X$ with $\cG_x=\{x\}$. 
We first record a finite-stage observation. 
Suppose that $V_1,\dots,V_m\in\cCO(\cG)$ are bisections 
with common source $U\ni x$ and pairwise disjoint ranges. 
Put 
\[ 
\cK:=\bigsqcup_{i,j=1}^m V_iV_j^{-1},\qquad 
F:=\{\theta_{V_i}(x)\mid 1\leq i\leq m\}. 
\] 
For any compact subset $C\subset\cG$, 
the compact set $C\setminus\cK$ contains no arrow 
with both source and range in $F$: 
there is a unique arrow between any two points of the orbit of $x$. 
By compactness, there is a clopen neighborhood $A\subset U$ of $x$ 
such that, writing $Z_A:=\bigsqcup_i r(V_iA)$, 
\[ 
C\cap r^{-1}(Z_A)\cap s^{-1}(Z_A) 
\subset\bigsqcup_{i,j=1}^m V_iAV_j^{-1}. 
\] 

Fix a compatible metric on $X$ and compact subsets 
$C_1\subset C_2\subset\cdots$ covering $\cG$. 
We construct neighborhoods $U_n\in\cCO(X)$ of $x$ and 
bisections $V_\mu\in\cCO(\cG)$, indexed by $\mu\in\{0,1\}^n$, 
with common source $U_n$ and pairwise disjoint ranges. 
Write 
\[ 
Z_n:=\bigsqcup_{\lvert\mu\rvert=n}r(V_\mu),\qquad 
\cK_n:=\bigsqcup_{\lvert\mu\rvert=\lvert\nu\rvert=n}V_\mu V_\nu^{-1}. 
\] 
Start with $U_0=X$ and $V_{\emptyset}=X$. 
Given the construction at level $n$, minimality provides an arrow 
$g_n$ with $s(g_n)=x$ and $r(g_n)\in U_n\setminus\{x\}$. 
Choose a compact open bisection $W_n$ containing $g_n$ 
whose source and range are disjoint subsets of $U_n$. 
For a sufficiently small clopen neighborhood 
$U_{n+1}\subset s(W_n)$ of $x$, put 
\[ 
V_{\mu0}:=V_\mu U_{n+1},\qquad 
V_{\mu1}:=V_\mu W_nU_{n+1} 
\qquad (\lvert\mu\rvert=n). 
\] 
Applying the observation to these bisections and $C_{n+1}$, 
and shrinking $U_{n+1}$ further, we arrange that 
\begin{equation}\label{eq:UHFcontrol} 
C_{n+1}\cap r^{-1}(Z_{n+1})\cap s^{-1}(Z_{n+1}) 
\subset\cK_{n+1}. 
\end{equation} 
We also require that every $r(V_{\mu i})$ has diameter less than 
$2^{-n-1}$ and that 
\[ 
U_{n+1}\cup r(W_nU_{n+1})\subsetneq U_n. 
\] 
These requirements can all be met by shrinking the common source. 

Set $Y:=\bigcap_{n\geq0}Z_n$. 
The nested binary partitions and the diameter condition give 
a homeomorphism $h:\{0,1\}^{\N}\to Y$, 
with $h(\mu\xi)\in r(V_\mu)$ for every finite word $\mu$ 
and infinite word $\xi$. 
Each parent range contains a nonempty clopen subset omitted 
by its two children. 
Together with the diameter condition, this shows that $Y$ is 
nowhere dense in $X$. 
The common choice of $W_n$ for all words of length $n$ gives 
\[ 
\theta_{V_\mu V_\nu^{-1}}(h(\nu\xi))=h(\mu\xi) 
\qquad (\lvert\mu\rvert=\lvert\nu\rvert). 
\] 
Consequently, under $h$, the compact groupoid 
\[ 
R_n:=\cK_n\cap r^{-1}(Y)\cap s^{-1}(Y) 
\] 
is the usual finite-stage relation on $\{0,1\}^{\N}$ 
in which two sequences agree after the first $n$ coordinates. 
The construction also gives the equality of subsets of $\cG$ 
\[ 
(V_\mu V_\nu^{-1})Y 
=\bigsqcup_{i=0,1}(V_{\mu i}V_{\nu i}^{-1})Y 
\qquad (\lvert\mu\rvert=\lvert\nu\rvert=n), 
\] 
because the source cylinder in $Y$ is partitioned into its two children. 
Hence $R_n\subset R_{n+1}$ as subgroupoids of $\cG$. 
Equation~\eqref{eq:UHFcontrol} and $\bigcup_n C_n=\cG$ imply 
\[ 
r^{-1}(Y)\cap s^{-1}(Y)=\bigcup_{n\geq0}R_n. 
\] 
Each $R_n$ is compact open in the full restriction, 
and its relative topology agrees with that of the corresponding 
finite-stage relation. 
Thus the full restriction, with its relative topology, 
is isomorphic to $\mathcal R_{2^\infty}$. 
It is therefore \'etale, and \eqref{eq:etalesubsetregularity} 
gives $\cG|Y=r^{-1}(Y)\cap s^{-1}(Y)$. 
\end{proof} 

\begin{corollary}\label{cor:AFrealizationminimal} 
Let $\cG$ be a minimal effective ample groupoid 
with Cantor unit space $X$, and let $\cH$ be an AF groupoid 
with compact unit space. 
Then $\cH$ is strongly realizable in $\cG$: 
there exists a closed nowhere dense $\cG$-\'etale subset 
$Y\subset X$ such that 
\[ 
\cH\cong\cG|Y=r^{-1}(Y)\cap s^{-1}(Y), 
\] 
with the relative topology on the full restriction. 
\end{corollary}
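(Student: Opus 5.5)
The plan is to combine the two realization results already established, using the transitivity of strong realizability recorded after Proposition~\ref{prop:iteratedreduction}.

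First, by Proposition~\ref{prop:UHFrealization}, there is a nowhere dense $\cG$-\'etale Cantor set $Y_0\subset X$ such that
\[
\cG|Y_0=r^{-1}(Y_0)\cap s^{-1}(Y_0)\cong\mathcal R_{2^\infty},
\]
with the relative topology.

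Second, $\mathcal R_{2^\infty}$ is a minimal AF groupoid with Cantor unit space. Transporting through this isomorphism, $\cG|Y_0$ is therefore itself a minimal AF groupoid with Cantor unit space $Y_0$. Proposition~\ref{prop:AFrealization}, applied to $\cG|Y_0$ and the given AF groupoid $\cH$, produces a closed nowhere dense $(\cG|Y_0)$-\'etale subset $Y\subset Y_0$ with
\[
\cH\cong(\cG|Y_0)|Y=r^{-1}(Y)\cap s^{-1}(Y),
\]
where the restriction is taken inside $\cG|Y_0$ and carries its relative topology.

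Third, we invoke the transitivity observation. Since $Y_0$ is $\cG$-\'etale, the full restriction of $\cG|Y_0$ to $Y$ coincides with the full restriction $r^{-1}(Y)\cap s^{-1}(Y)$ of $\cG$, because every arrow of $\cG$ with endpoints in $Y$ already lies in $\cG|Y_0$. The relative topologies also agree, since $\cG|Y_0$ carries the relative topology from $\cG$. Hence this full restriction is \'etale, so $Y$ is $\cG$-\'etale, and by \eqref{eq:etalesubsetregularity} it equals $\cG|Y$. Finally, $Y$ is nowhere dense in $X$ because $Y\subset Y_0$ and $Y_0$ is nowhere dense. (Nowhere density of $Y$ inside $Y_0$ is not even needed.)

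I expect no real obstacle here. The only point to check is that the topologies match: the relative topology of $Y$ inside $Y_0$ is the relative topology from $X$, and the topology on arrows is inherited in the same way. This is immediate from the fact that both constructions use relative topologies throughout.
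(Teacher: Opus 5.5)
Your proposal is correct and follows the paper's proof essentially step for step. You realize $\mathcal R_{2^\infty}$ on a nowhere dense $\cG$-\'etale Cantor set $Y_0$ via Proposition~\ref{prop:UHFrealization}, then apply Proposition~\ref{prop:AFrealization} inside the minimal AF groupoid $\cG|Y_0$, and use that both steps are full restrictions with relative topologies to conclude that $Y$ is $\cG$-\'etale with $\cH\cong\cG|Y$, nowhere dense since $Y\subset Y_0$.
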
 

\begin{proof} 
By Proposition~\ref{prop:UHFrealization}, choose a nowhere dense 
$\cG$-\'etale Cantor set $Y_0\subset X$ 
such that $\cG|Y_0\cong\mathcal R_{2^\infty}$. 
Apply Proposition~\ref{prop:AFrealization} to the minimal AF 
groupoid $\cG|Y_0$. 
It gives a closed $(\cG|Y_0)$-\'etale subset $Y\subset Y_0$ 
such that $\cH\cong(\cG|Y_0)|Y$. 
Both reductions are full restrictions with the relative topology. 
Hence $Y$ is $\cG$-\'etale and $\cH\cong(\cG|Y_0)|Y=\cG|Y$. 
Since $Y\subset Y_0$, it is nowhere dense in $X$. 
\end{proof} 

\begin{remark}\label{rem:AFrealizationGV} 
The nested clopen construction in 
Proposition~\ref{prop:UHFrealization} is closely related to 
\cite[Lemma 7.1 and Proposition 7.2]{GV26ETDS}. 
For a minimal fully amplified group acting on a Cantor set, 
Grigorchuk and Vorobets construct uncountably many nowhere dense 
Cantor subsets on which the corresponding stabilizers act minimally. 
In our construction, \eqref{eq:UHFcontrol} also controls every 
ambient arrow with endpoints in $Y$. 
This identifies the full restriction with $\mathcal R_{2^\infty}$ 
and makes $Y$ \'etale. 
Corollary~\ref{cor:AFrealizationminimal} then allows any AF groupoid 
with compact unit space to be prescribed as the full restriction. 
Proposition~\ref{prop:uncountablestabilizers} uses these restrictions 
to distinguish the stabilizers up to abstract group isomorphism 
for any minimal ambient groupoid. 
If the ambient groupoid is almost finite or purely infinite, 
the stabilizers are also maximal. 
In comparison, 
\cite[Proposition 7.2]{GV26ETDS} constructs pairwise distinct closed 
subsets, without asserting that their stabilizers are pairwise 
nonisomorphic. 
\end{remark} 

Let $T$ be a homeomorphism of a Cantor set $X$. 
Its transformation groupoid consists of the arrows 
$(T^n(x),n,x)$, with $x\in X$ and $n\in\Z$, 
range $T^n(x)$, source $x$, and multiplication given by addition 
of the integer coordinates. 
Its topology is the product topology under the identification 
with $X\times\Z$, where $\Z$ is discrete. 
For minimal $\Z$ systems, the AF realization assertion of 
Corollary~\ref{cor:AFrealizationminimal} also follows directly 
from the Bratteli--Vershik model, as in 
Proposition~\ref{prop:AFrealization}. 
In this case, only AF groupoids can be realized. 

\begin{proposition}\label{prop:ZAFrealization} 
Let $\cG$ be the transformation groupoid of a minimal homeomorphism 
$T$ of a Cantor set $X$. 
For every closed nowhere dense subset $Y\subset X$, 
the groupoid $\cG{\downarrow}Y=\cG|Y$ is AF. 
Consequently, an effective ample groupoid $\cH$ with compact unit space 
is realizable in $\cG$ if and only if it is AF; 
in that case, it is strongly realizable in $\cG$. 
\end{proposition}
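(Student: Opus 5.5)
The plan is to show directly that $\cG|Y$ is AF. The converse part will then follow from results already available. Since the transformation groupoid of $T$ is principal, $\cG|Y$ is principal and so $\cG{\downarrow}Y=\cG|Y$. For the ``consequently'' clause: if $\cH$ is realizable, then $\cH\cong\cG{\downarrow}Y=\cG|Y$ for some closed nowhere dense $Y$, hence $\cH$ is AF. Conversely, if $\cH$ is AF, Corollary~\ref{cor:AFrealizationminimal} gives strong realizability. So the content is the AF assertion.

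For the AF assertion I will use the classical fact that the $\Z$-action restricted to the complement of a point orbit is AF. Fix $y_0\in X$ and let $\mathcal R_{y_0}\subset\cG$ be the subgroupoid obtained by cutting the orbit of $y_0$ into its two half-orbits. Concretely, $\mathcal R_{y_0}$ is the union of the arrows $(T^n x,n,x)$ such that the segment $x,Tx,\dots,T^nx$ (or its reverse for $n<0$) does not pass from $y_0$'s backward half to its forward half. By Putnam's theorem, or via the Bratteli--Vershik model with nested Kakutani--Rokhlin partitions whose bases shrink to $y_0$, $\mathcal R_{y_0}$ is an open AF subgroupoid, with its relative (inductive-limit) topology, whose orbits agree with those of $\cG$ except that the $T$-orbit of $y_0$ splits into two. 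My goal is to choose $y_0$ so that $\cG|Y$ becomes an open subgroupoid of $\mathcal R_{y_0}|Y$, or equals it.

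Since $Y$ is nowhere dense, I pick $y_0\notin Y$. In fact I want more: a clopen neighborhood $B$ of $y_0$ disjoint from $Y$, with the Kakutani--Rokhlin towers taken over bases $B_k\downarrow\{y_0\}$ and $B_k\subset B$. Now take an arrow $g=(T^nx,n,x)\in\cG|Y$ that does not lie in $\mathcal R_{y_0}$. Such an arrow crosses between the two half-orbits of $y_0$, so $x$ lies in the orbit of $y_0$. The orbit segment then passes through $y_0$ while $x,T^nx\in Y$, and since $y_0\notin Y$ and $g$ is regular, I must show this cannot happen in a way that breaks the AF structure.

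The cleaner route is as follows. Let $\cK_k$ be the elementary subgroupoid generated by the $k$-th tower partition with base $B_k$. Then $\cK_k$ is compact open in $\cG$, and $\bigcup_k\cK_k=\mathcal R_{y_0}$. I then show that $\cG|Y=\bigcup_k(\cK_k\cap\cG|Y)$, and that each $\cK_k\cap\cG|Y$ is compact open in $\cG|Y$. Compactness holds because $\cK_k\cap r^{-1}(Y)\cap s^{-1}(Y)$ is compact and its regular part is clopen in it: regularity is a local condition, and I will check that it is closed under limits inside a compact open bisection piece of $\cK_k$. Using the principality argument from Proposition~\ref{prop:AFregularreduction}, these pieces generate elementary subgroupoids, so $\cG|Y$ is AF.

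The main obstacle is exhaustion, namely showing that every regular arrow of $\cG|Y$ lies in $\mathcal R_{y_0}$. An arrow of $\cG|Y$ crossing the cut point involves points of the orbit of $y_0$. I would first try to avoid this altogether by choosing $y_0$ so that its $T$-orbit misses $Y$ entirely. This is possible because $\bigcup_n T^{-n}(Y)$ is a countable union of closed nowhere dense sets, hence meager, so by Baire some $y_0$ has orbit disjoint from $Y$. Then no arrow with endpoints in $Y$ meets the cut, giving $r^{-1}(Y)\cap s^{-1}(Y)\subset\mathcal R_{y_0}$. The remaining verification is the topological one: the topology of $\cG|Y$ coincides with the one induced from $\mathcal R_{y_0}$, and $\cG|Y$ is an open subgroupoid of the AF groupoid $\mathcal R_{y_0}|Y$. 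Openness comes from the compact open $\cK_k$ being open in $\cG$. Once $\cG|Y$ is an open subgroupoid of $\mathcal R_{y_0}|Y$, the AF property passes to it by the same exhaustion argument as in Proposition~\ref{prop:AFregularreduction}, applied to $\mathcal R_{y_0}$ in place of $\cG$.
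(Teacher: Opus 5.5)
Your final argument is correct and is essentially the paper's. By Baire you pick a point whose orbit misses $Y$, so that $r^{-1}(Y)\cap s^{-1}(Y)$ lies in the open AF subgroupoid splitting that orbit. Openness then makes $\cG|Y$ the regular reduction of that AF subgroupoid, and Proposition~\ref{prop:AFregularreduction} applies. For the converse, the paper gives a direct Bratteli--Vershik construction, but it notes that Corollary~\ref{cor:AFrealizationminimal} already suffices, as you use.

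One correction: discard the intermediate claim that $\cK_k\cap\cG|Y$ is compact. It is false in general, because regular arrows between isolated points of $Y$ can converge to a non-regular arrow of $\cK_k$ with both endpoints in $Y$. It is also unnecessary, since the exhaustion argument of Proposition~\ref{prop:AFregularreduction} uses compact open subsets of $\cG|Y$ itself.
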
 

\begin{proof} 
Let $Y\subset X$ be closed and nowhere dense. 
By the Baire category theorem, there exists 
\[ 
x\in X\setminus\bigcup_{n\in\Z}T^n(Y). 
\] 
The Bratteli--Vershik model gives a minimal AF wide open subgroupoid 
$\cK\subset\cG$ which splits the $T$-orbit of $x$ into the two 
$\cK$-orbits of $x$ and $T(x)$ and leaves all other orbits unchanged 
\cite[Theorem 1.2]{GPS04ETDS}. 
Since the $T$-orbit of $x$ misses $Y$, the full restrictions 
of $\cK$ and $\cG$ to $Y$ coincide. 
Openness of $\cK$ then gives $\cG|Y=\cK|Y$, 
which is AF by Proposition~\ref{prop:AFregularreduction}. 
Principality gives $\cG{\downarrow}Y=\cG|Y$. 

Conversely, let $\cH$ be an AF groupoid with compact unit space. 
Choose any $x\in X$ and form $\cK$ as above. 
The finite set $E:=\{x,T(x)\}$ is $\cK$-\'etale and 
has measure zero for every $\cK$-invariant probability measure, 
since these measures are atomless. 
Its $\cK$-saturation is precisely the $T$-orbit of $x$. 
Applying \cite[Lemma 3.1]{Ma08JMSJ} with $E$ as the closed set 
whose saturation is to be avoided gives a closed $\cK$-\'etale 
subset $Z\subset X$ with $\cK|Z\cong\cH$ which misses that orbit. 
Moreover, $Z$ has measure zero for every $\cK$-invariant probability 
measure, so it is nowhere dense by minimality of $\cK$. 
The full restrictions of $\cK$ and $\cG$ to $Z$ coincide 
with their relative topologies. 
Thus $Z$ is $\cG$-\'etale and 
\[ 
\cH\cong\cG{\downarrow}Z=\cG|Z=r^{-1}(Z)\cap s^{-1}(Z), 
\] 
which proves strong realizability independently of 
Corollary~\ref{cor:AFrealizationminimal}. 
\end{proof} 

For SFT groupoids, \cite[Proposition 8.7]{Ma26maximal3} shows 
that every SFT groupoid with Cantor unit space is strongly 
realizable in every irreducible SFT groupoid with Cantor unit space. 
Determining realizability and strong realizability for other classes 
of ample groupoids remains an interesting problem. 

Varying the minimal AF groupoid in 
Corollary~\ref{cor:AFrealizationminimal} gives 
the following family of pairwise nonisomorphic stabilizers. 
Under the almost finite or purely infinite hypothesis, 
the maximality criterion applies to every member of the family. 

\begin{proposition}\label{prop:uncountablestabilizers} 
Let $\cG$ be a minimal ample groupoid with Cantor unit space $X$. 
There exists a family $(Y_t)_{t\in\mathcal T}$, 
with $\#\mathcal T=2^{\aleph_0}$, 
of nowhere dense $\cG$-\'etale Cantor subsets of $X$ 
such that each $\cG|Y_t$ is a UHF groupoid and, 
for every subgroup $\sD(\cG)\subset\Gamma\subset\sF(\cG)$, 
the groups $\St(\Gamma,Y_t)$, $t\in\mathcal T$, are pairwise 
nonisomorphic. 
If $\cG$ is either almost finite or purely infinite, 
these groups are maximal subgroups of $\Gamma$. 
\end{proposition}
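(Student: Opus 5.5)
The plan is to combine Corollary~\ref{cor:AFrealizationminimal} with Theorem~\ref{thm:stabilizerinvariants}~(2), using UHF groupoids of different supernatural types. For a supernatural number $\mathfrak n=\prod_p p^{k_p}$ with $\sum_p k_p=\infty$, let $\cH_{\mathfrak n}$ be the UHF groupoid of type $\mathfrak n$. This is the tail equivalence groupoid of a product $\prod_j\{1,\dots,m_j\}$ with $\prod_j m_j=\mathfrak n$. It is a minimal principal AF groupoid with Cantor unit space. Its $H_0$ with order unit is the dimension group $\Z[1/\mathfrak n]\subset\mathbb Q$ with unit $1$, that is, the subgroup of rationals whose denominators divide $\mathfrak n$ (\cite[Theorem 4.10]{Ma12PLMS}). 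The ordered group with unit determines $\mathfrak n$, because $\mathfrak n$ is recovered as the supremum of the denominators of elements of $\Z[1/\mathfrak n]$. Hence $\cH_{\mathfrak n}\cong\cH_{\mathfrak m}$ forces $\mathfrak n=\mathfrak m$. There are $2^{\aleph_0}$ such supernatural numbers; for instance, take $\prod_{p\in P}p^\infty$ for infinite sets $P$ of primes. Let $\mathcal T$ be this index set.

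For each $t\in\mathcal T$, Corollary~\ref{cor:AFrealizationminimal} gives a closed nowhere dense $\cG$-\'etale subset $Y_t$ with $\cG|Y_t=r^{-1}(Y_t)\cap s^{-1}(Y_t)\cong\cH_t$. Since $\cH_t$ has Cantor unit space, $Y_t$ is a Cantor set, in particular infinite. The regular reduction is principal and AF, so \eqref{eq:hausdorffreduction} holds and $\cG{\downarrow}Y_t=\cG|Y_t\cong\cH_t$. This groupoid is minimal. By Proposition~\ref{stabilizerminimality}, both $\St(\cG,Y_t)$ and $\St(\Gamma,Y_t)$ act minimally on $Y_t$, for every $\Gamma$ with $\sD(\cG)\subset\Gamma\subset\sF(\cG)$. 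Note that the sets $Y_t$ are chosen once and independently of $\Gamma$, as the statement requires.

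Fix such a $\Gamma$ and suppose $\St(\Gamma,Y_t)\cong\St(\Gamma,Y_{t'})$. Apply Theorem~\ref{thm:stabilizerinvariants} with $\cG_1=\cG_2=\cG$ and $\Gamma_1=\Gamma_2=\Gamma$. Its hypotheses hold: $Y_t$ and $Y_{t'}$ are infinite, closed and nowhere dense; $\St(\cG,Y_t)$ acts minimally on $Y_t$; and \eqref{eq:hausdorffreduction} holds for both sets. Part~(2) then gives $\cH_t\cong\cG{\downarrow}Y_t\cong\cG{\downarrow}Y_{t'}\cong\cH_{t'}$. Comparing dimension groups with order units yields $t=t'$. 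For the maximality assertion, assume $\cG$ is almost finite or purely infinite. Each $Y_t$ is nonempty, closed and nowhere dense, and $\St(\Gamma,Y_t)$ acts minimally on it, so Theorem~\ref{stabilizeractionmaximal} shows that $\St(\Gamma,Y_t)$ is maximal in $\Gamma$.

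The only step needing some care is the classification input: that UHF groupoids of distinct supernatural types are nonisomorphic. I would settle it with the invariance of $(H_0,[1])$ under groupoid isomorphism together with the identification of $H_0$ with the dimension group. Everything else is direct citation of earlier results.
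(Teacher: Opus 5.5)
Your proposal is correct and follows essentially the same route as the paper: you realize UHF groupoids of distinct supernatural types via Corollary~\ref{cor:AFrealizationminimal}, distinguish the stabilizers by Theorem~\ref{thm:stabilizerinvariants}~(2), and obtain maximality from the minimality criterion. The only cosmetic differences are that the paper separates the groupoids $\cH_P$ by which primes act surjectively on the abstract group $H_0$ and indexes by all nonempty sets of primes, whereas you use the ordered group with unit and restrict to infinite sets of primes; either choice works.
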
 

\begin{proof} 
For each nonempty set $P$ of primes, let $\cH_P$ 
be the UHF groupoid of supernatural type $\prod_{p\in P}p^\infty$. 
Its one-vertex Bratteli diagram gives 
\[ 
H_0(\cH_P)\cong\Z[p^{-1}\mid p\in P]. 
\] 
Multiplication by a prime $p$ on this group is surjective 
precisely when $p\in P$. 
Thus the groupoids $\cH_P$ are pairwise nonisomorphic. 
By Corollary~\ref{cor:AFrealizationminimal}, choose a nowhere dense 
$\cG$-\'etale Cantor set $Y_P\subset X$ 
with $\cG|Y_P\cong\cH_P$. 
These restrictions are minimal and principal. 
Proposition~\ref{stabilizerminimality} therefore gives minimality 
of the stabilizer actions, and the effective reductions 
coincide with the prescribed AF groupoids. 
By Theorem~\ref{thm:stabilizerinvariants} (2), 
an isomorphism between $\St(\Gamma,Y_P)$ and 
$\St(\Gamma,Y_{P'})$ would imply 
$\cH_P\cong\cH_{P'}$, and hence $P=P'$. 
Taking $\mathcal T$ to be the set of nonempty sets of primes, 
which has cardinality $2^{\aleph_0}$, proves the nonisomorphism assertion. 
If $\cG$ is almost finite or purely infinite, 
Theorem~\ref{stabilizermaximal} also makes every $\St(\Gamma,Y_P)$ 
maximal in $\Gamma$. 
\end{proof}

The realization theorem also has a direct $C^*$-algebraic consequence. 
Recall that a $C^*$-subquotient of a $C^*$-algebra is a quotient of 
one of its $C^*$-subalgebras. 

\begin{corollary}\label{cor:AFsubquotients} 
Let $\cG$ be a minimal effective ample groupoid with Cantor unit space. 
For every AF groupoid $\cH$ with compact unit space, 
$C_r^*(\cH)$ is a $C^*$-subquotient of $C_r^*(\cG)$. 
More precisely, there exist a unital $C^*$-subalgebra 
$C(\cG^{(0)})\subset A\subset C_r^*(\cG)$ and a surjective 
homomorphism $\rho:A\to C_r^*(\cH)$ such that 
$\rho(C(\cG^{(0)}))=C(\cH^{(0)})$. 
In particular, the CAR algebra 
\[ 
M_{2^\infty}=\bigotimes_{n=1}^\infty M_2 
\] 
is a $C^*$-subquotient of $C_r^*(\cG)$. 
\end{corollary}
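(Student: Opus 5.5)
We take $Y$ from Corollary~\ref{cor:AFrealizationminimal}: a closed nowhere dense $\cG$-\'etale subset of $\cG^{(0)}$ with $\cH\cong\cG|Y=r^{-1}(Y)\cap s^{-1}(Y)$, the full restriction carrying the relative topology. We set $A:=C_r^*(\cS_Y)$, where $\cS_Y$ is the $Y$-preserving subgroupoid of Definition~\ref{def:preservingsubgroupoid}. Since $\cS_Y$ is a wide open subgroupoid of $\cG$, extension by zero identifies $C_c(\cS_Y)$ with a $*$-subalgebra of $C_c(\cG)$ containing $C(\cG^{(0)})$. The standard fact that the reduced norm of an open subgroupoid agrees with the norm it inherits from the ambient reduced algebra then gives a unital inclusion $C(\cG^{(0)})\subset C_r^*(\cS_Y)\subset C_r^*(\cG)$. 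To justify that fact, we use that each regular representation of $\cG$ restricted to $C_c(\cS_Y)$ decomposes as a direct sum of copies of regular representations of $\cS_Y$, indexed by the $\cS_Y$-cosets in $s^{-1}(x)$. This also holds without amenability.

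Next, $Y$ is a closed $\cS_Y$-invariant subset of the unit space of $\cS_Y$, and $\cS_Y|Y=\cG|Y$ carries the relative topology and is \'etale. Restriction of functions $f\mapsto f|_{\cS_Y|Y}$ is therefore a $*$-homomorphism $C_c(\cS_Y)\to C_c(\cG|Y)$. Multiplicativity follows from invariance: for $g\in\cS_Y|Y$, every factorization $g=hk$ in $\cS_Y$ has $h,k\in\cS_Y|Y$. Surjectivity on $C_c$ holds because compact open bisections of $\cG|Y$ lift to compact open bisections of $\cS_Y$ by the definition of regularity. For a point $y\in Y$, the regular representation of $\cG|Y$ at $y$ coincides with the regular representation of $\cS_Y$ at $y$ composed with restriction, because $s^{-1}(y)\cap\cS_Y\subset\cS_Y|Y$ by invariance. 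Hence $\|f|_{\cS_Y|Y}\|_r\le\|f\|_r$, and restriction extends to a surjection $\rho:C_r^*(\cS_Y)\to C_r^*(\cG|Y)$. Its range is closed and contains the dense subalgebra $C_c(\cG|Y)$, so $\rho$ is onto. On $C(\cG^{(0)})$, the map $\rho$ is restriction of functions to $Y$, so by Tietze it maps onto $C(Y)$. Composing with the isomorphism $C_r^*(\cG|Y)\cong C_r^*(\cH)$ induced by $\cH\cong\cG|Y$, which carries $C(Y)$ onto $C(\cH^{(0)})$, yields the required $\rho$.

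For the CAR algebra, we take $\cH=\mathcal R_{2^\infty}$ (or appeal directly to Proposition~\ref{prop:UHFrealization}) and recall the standard identification $C_r^*(\mathcal R_{2^\infty})\cong M_{2^\infty}$ from the Bratteli diagram: the finite-stage relations $R_n$ give $C^*(R_n)\cong M_{2^n}\otimes C(\{0,1\}^{\N})$-type blocks, and in the inductive limit these assemble to $\bigotimes M_2$.

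The main point to check carefully is the norm compatibility in the first two steps: the inclusion of reduced algebras for the open subgroupoid, and the contractivity of restriction to the closed invariant set. We do not need exactness of the restriction sequence, only surjectivity, and that is why no amenability assumption is required.
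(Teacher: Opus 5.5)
Your proof is correct and follows the paper's own argument. You take the \'etale set $Y$ from Corollary~\ref{cor:AFrealizationminimal}, put $A=C_r^*(\cS_Y)$, and obtain $\rho$ by restricting to the closed $\cS_Y$-invariant set $Y$ and composing with the isomorphism induced by $\cG|Y\cong\cH$; your coset decomposition of the regular representations and the observation $s^{-1}(y)\cap\cS_Y\subset\cS_Y|Y$ for $y\in Y$ make explicit the two standard norm facts (the inclusion for open subgroupoids and the contractivity of restriction to closed invariant sets) that the paper uses without comment.
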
 

\begin{proof} 
By Corollary~\ref{cor:AFrealizationminimal}, choose a closed 
nowhere dense $\cG$-\'etale subset $Y\subset\cG^{(0)}$ such that 
$\cG|Y\cong\cH$. 
The $Y$-preserving subgroupoid of 
Definition~\ref{def:preservingsubgroupoid} satisfies 
$\cS_Y=\cG|Y\sqcup\cG|(\cG^{(0)}\setminus Y)$ as sets. 
Since $\cS_Y$ is wide and open in $\cG$, extension by zero gives 
\[ 
C(\cG^{(0)})\subset A:=C_r^*(\cS_Y)\subset C_r^*(\cG). 
\] 
Since $Y$ is $\cS_Y$-invariant, restriction to $Y$ extends to a 
surjective homomorphism 
\[ 
C_r^*(\cS_Y)\longrightarrow C_r^*(\cS_Y|Y) 
=C_r^*(\cG|Y). 
\] 
Composing with the isomorphism induced by $\cG|Y\cong\cH$ gives 
a surjective homomorphism $\rho:A\to C_r^*(\cH)$. 
On the diagonal, this is the restriction map 
$C(\cG^{(0)})\to C(Y)$ followed by the identification 
$C(Y)\cong C(\cH^{(0)})$. 
In particular, $\rho(C(\cG^{(0)}))=C(\cH^{(0)})$. 
Thus $C_r^*(\cH)$ is a $C^*$-subquotient of $C_r^*(\cG)$ 
with the asserted diagonal compatibility. 
Taking $\cH=\mathcal R_{2^\infty}$ gives the last assertion, because 
$C_r^*(\mathcal R_{2^\infty})\cong M_{2^\infty}$. 
\end{proof} 

Glimm's theorem \cite{Gl61Annals} characterizes separable non-type I 
$C^*$-algebras by the occurrence of the CAR algebra as a 
$C^*$-subquotient. 
In the setting of Corollary~\ref{cor:AFsubquotients}, 
$C_r^*(\cG)$ is simple by Renault's simplicity theorem 
\cite[Corollary 4.6]{Re91JOT}. 
It is also separable, unital and infinite dimensional, and hence 
not of type I. 
Thus the existence of a CAR subquotient already follows abstractly 
from Glimm's theorem. 
The construction above specifies an intermediate algebra 
$C_r^*(\cS_Y)$ and a quotient induced by restriction to $Y$, 
for every prescribed AF groupoid $\cH$ with compact unit space. 
It also identifies the induced map on the canonical diagonal 
subalgebras. 
No amenability assumption on $\cG$ is needed: 
restriction to a closed invariant subset gives a surjection 
at the level of reduced groupoid $C^*$-algebras.

\section*{Acknowledgements}
The author used ChatGPT (GPT-6 Astra) and Claude (Opus 5.5) 
to assist with mathematical discussions, checking arguments, 
literature searches, and the drafting and revision of the manuscript. 
The mathematical arguments and references were independently 
verified by the author, who is responsible for the final content.

\bibliography{../zzz} 
\bibliographystyle{amsplain}

\end{document}